\def\dx{\,{\rm dx}}
\newtheorem{theorem}{Theorem}[section]
\newtheorem{remark}[theorem]{Remark}
\newtheorem{proposition}[theorem]{Proposition}
\newtheorem{lemma}[theorem]{Lemma}
\newtheorem{corollary}[theorem]{Corollary}
\newtheorem{definition}[theorem]{Definition}
\newtheorem{assumption}[theorem]{Assumption}
\newcounter{mnote}
\let\oldmarginpar\marginpar
\renewcommand\marginpar[1]{\-\oldmarginpar[\raggedleft\footnotesize #1]
  {\raggedright\footnotesize #1}}
\numberwithin{equation}{section}
\setlist[enumerate]{nosep}
\def\rn{\boldsymbol{\mathbb{R}^{\boldsymbol{\mathsf{n}}}}}
\def\Hdiv{H({\rm div})}
\def\HdivT{H({\rm div},T)}
\def\HdivOmega{H({\rm div},\Omega)}
\def\hlk{H\Lambda^k} 
\def\uV{\undertilde{V}}
\def\uL{\undertilde{L}}
\def\usigma{\undertilde{\sigma}}
\def\utau{\undertilde{\tau}}
\def\ualpha{\undertilde{\alpha}}
\def\ua{\undertilde{a}}
\def\ub{\undertilde{b}}
\def\uf{\undertilde{f}}
\def\ux{\undertilde{x}}
\def\curl{{\rm curl}}
\def\dv{{\rm div}}
\def\omp{\ominus^\perp}
\def\opp{\oplus^\perp}
\def\od{\mathbf{d}}
\def\oT{\mathbf{T}}
\def\odelta{\boldsymbol{\delta}}
\def\okappa{\boldsymbol{\kappa}}
\def\icr{{\sf icr}}
\def\hf{\boldsymbol{\mathfrak{H}}}
\def\R{\mathcal{R}}
\def\N{\mathcal{N}}
\def\xA{\boldsymbol{\mathbf{A}}}
\def\xB{\boldsymbol{\mathbf{B}}}
\def\xD{\boldsymbol{\mathbf{D}}}
\def\xH{{\boldsymbol{\mathbf{H}}}}
\def\xT{\boldsymbol{\mathbf{T}}}
\def\xX{{\boldsymbol{\mathbf{X}}}}
\def\xv{\boldsymbol{\mathbf{v}}}
\def\xw{\boldsymbol{\mathbf{w}}}
\def\yD{\boldsymbol{\mathbbm{D}}}
\def\yE{\boldsymbol{\mathbbm{E}}}
\def\yS{\boldsymbol{\mathbbm{S}}}
\def\yT{\boldsymbol{\mathbbm{T}}}
\def\yY{{\boldsymbol{\mathbbm{Y}}}}
\def\yb{\boldsymbol{\mathbbm{b}}}
\def\yw{\boldsymbol{\mathbbm{w}}}
\def\zE{\boldsymbol{\mathsf{E}}}
\def\zS{\boldsymbol{\mathsf{S}}}
\def\zZ{{\boldsymbol{\mathsf{Z}}}}
\def\fomega{\boldsymbol{\omega}}
\def\fmu{\boldsymbol{\mu}}
\def\fzeta{\boldsymbol{\zeta}}
\def\feta{\boldsymbol{\eta}}
\def\fsigma{\boldsymbol{\sigma}}
\def\ftau{\boldsymbol{\tau}}
\def\fvartheta{\boldsymbol{\vartheta}}
\def\fvarsigma{\boldsymbol{\varsigma}}
\def\fvarphi{\boldsymbol{\varphi}}
\def\fpsi{\boldsymbol{\psi}}
\def\fiota{\mathbf{\iota}}
\def\fchi{\mathbf{\chi}}
\def\fvarrho{\mathbf{\varrho}}
\def\fnu{\mathbf{\nu}}
\def\fxi{\mathbf{\xi}}
\def\fvarpi{\mathbf{\varpi}}
\def\fW{\boldsymbol{W}}
\def\fL{\boldsymbol{L}}
\def\ff{\boldsymbol{f}}
\def\ixalpha{\boldsymbol{\alpha}} 
\begin{document}

\title{Nonconforming finite element spaces for $H\Lambda^k$ in $\mathbb{R}^n$}

\author{Shuo Zhang}
\address{LSEC, Institute of Computational Mathematics and Scientific/Engineering Computing, Academy of Mathematics and System Sciences, Chinese Academy of Sciences, Beijing 100190; University of Chinese Academy of Sciences, Beijing, 100049; People's Republic of China}
\email{szhang@lsec.cc.ac.cn}

\thanks{The research is partially supported by NSFC (12271512, 11871465).}

\begin{abstract}
This paper constructs a unified family of nonconforming finite element spaces for $H\Lambda^k$ in $\rn$ ($0\leqslant k\leqslant n$, $n\geqslant 1$). The spaces employ piecewise Whitney forms as shape functions, and include the lowest-degree Crouzeix-Raviart element space for $H\Lambda^0$. Optimal approximations and uniform discrete Poincar\'e inequalities are presented. Based on the newly constructed finite element spaces, discrete de Rham complexes with commutative diagrams, and the discrete Helmholtz decomposition and Hodge decomposition for piecewise constant spaces are established. All discrete operators involved are local, acting cell-wise. A framework of nonconforming finite element exterior calculus is then established, and is naturally connected to the classical conforming one. The cooperation of conforming and nonconforming finite element spaces leads to new discretization schemes of the Hodge Laplace problem. 

The new finite element spaces are constructed by a novel approach that seeks to mimic the dual connections between adjoint operators; novel construction methods and basic estimations are presented. Although the new spaces do not fit Ciarlet's finite element definition, they admit locally supported basis functions each spanning at most two adjacent cells, which makes the computation of the local stiffness matrices and the assembling of the global stiffness matrix implementable by following the standard procedure. Some numerical experiments are given to show the implementability and the performance of the new kind of spaces. 

\end{abstract}

\subjclass[2010]{Primary 47A05, 47A65, 47N40, 65J10, 65N30} 

\keywords{adjoint operator, closed range theorem, structure preserving, adjoint-by-conforming finite element method, exterior differential operator, Poincar\'e-Lefschetz duality, de Rham complex, Helmholtz decomposition, Hodge decomposition, Hodge Laplace problem, nonconforming finite element exterior calculus}

\maketitle

\tableofcontents

\section{Introduction}

Conforming finite elements for exterior differential forms have been extensively studied, based on which conforming finite element exterior calculus has been well established; we refer to, e.g., \cite{Arnold.D;Falk.R;Winther.R2010bams,Arnold.D2018feec,Arnold.D;Falk.R;Winther.R2006acta,Boffi.D;Brezzi.F;Fortin.M2013,Hiptmair.R2002acta} and the references therein for details. Naturally, the research has now reached a point where extension is appropriate to nonconforming methods. Well-designed nonconforming methods can possess many characteristics that conforming ones lack, with the (lowest-degree) Crouzeix-Raviart element \cite{Crouzeix.M;Raviart.P1973} being a typical example. The Crouzeix-Raviart element, originally designed for $H^1$ which is equivalent to $H\Lambda^0$ for $0$-forms, is among the most widely used finite elements. It can be distinguished from conforming ones with kinds of practically crucial properties, including, e.g.,
\begin{itemize}
\item Different from conforming interpolators discussed in \cite{Christiansen.S;Winther.R2008smoothed,Clement.P1975,Gawlik.E;Holst.M;Licht.M2021,Licht.M2019mcweakly,Licht.M2019mc,Ern.A;Guermond.J2017M2AN,Scott.R;Zhang.S1990intp,Falk.R;Winther.R2014}, the Crouzeix-Raviart element admits a cell-wise defined\footnote{Here and in the sequel, by ``locally defined" or ``cell-wise defined", we mean if two functions $u$ and $v$ are equal on a cell $T$, then their respective interpolations $\mathbb{I}u$ and $\mathbb{I}v$ are equal on $T$.} stable interpolator which works for functions in $H^1$ without using the inter-cell regularization, smoothing or averaging techniques. 
\item In the construction of Helmholtz orthogonal decomposition of piecewise constants, which cannot be established when restricted to conforming element spaces, the lowest-degree Crouzeix-Raviart element plays an irreplaceable role~\cite{Arnold.D;Falk.R1989,monk1991mixed}. 
\item Applied to the computation of Laplacian eigenvalues, the lowest-degree Crouzeix-Raviart element scheme may yield asymptotic lower bounds to the exact eigenvalues~\cite{Armentano.M;Duran.R2004asymptotic}, which differs essentially from conforming ones.
\end{itemize}
These properties may indicate the potential theoretical and practical significance of nonconforming methods compared to conforming ones. This paper investigates nonconforming finite element discretizations for general exterior differential forms, and particularly, generalizes the Crouzeix-Raviart element for $H\Lambda^0$ to a unified family for $H\Lambda^k$ for $0\leqslant k\leqslant n$ in $\rn$ by a novel approach. Nonconforming finite element exterior calculus can then be established based on these spaces.
~\\

Attempts to generalize the Crouzeix-Raviart elements have been devoted to the $\Hdiv$ problems \cite{Arbogast.T;Correa.M2016,Shi.D;Pei.L2008low,Quan.Q;Ji.X;Zhang.S2022}. Following directly from Crouzeix-Raviart element, these elements all use the integral of the normal components as nodal parameters. For these elements, the crucial property of the Crouzeix-Raviart element, namely cell-wise defined nodal interpolator, cannot be validated for functions with only $\Hdiv$ regularity, nor can an associated discrete Helmholtz decomposition be established. Further, if we try to embed such an $H(\dv)$ element into a discretized de Rham complex, which is a crucial issue for the discretization of exterior differential operators, the continuity restriction for the corresponding $H^1$ finite element is the evaluation at vertices. As well known, the continuity of the evaluation at vertices is neither sufficient nor necessary for a finite element to work for $H^1$ problems, and the weak continuity condition for these $\Hdiv$ elements is not as reasonable as the original Crouzeix-Raviart element. It is suggested in \cite{Bringmann.P;Ketteler.J;Schedensack.M2024} that vector Crouzeix-Raviart element can be used for $H(\curl)$ in three dimension; though, the same obstacles can be come across. 

Different from existing attempts, instead of establishing the space by imposing local continuity primally, the main ingredient of the new approach is to reveal and mimic the relationship between adjoint operators, inspired by a new interpretation of the Crouzeix-Raviart element. Actually, beyond being a {\bf consequence}, the well-known integration by part formula, on the lowest-degree Crouzeix-Raviart element space $V^{\rm CR}_h$ and the lowest-degree Raviart-Thomas element space $\uV{}^{\rm RT}_{h0}$ on a grid $\mathcal{G}_h$,
\begin{equation}\label{eq:greencrrt}
\sum_{T\in\mathcal{G}_h}\int_T\nabla v_h\utau{}_h+\int_Tv_h\dv\utau{}_h=0,\ \ \mbox{for}\ v_h\in V^{\rm CR}_h\ \mbox{and}\ \utau{}_h\in \uV{}^{\rm RT}_{h0}
\end{equation}
also serves as a {\bf sufficient} condition for a piecewise linear polynomial function to belong to $V^{\rm CR}_h$, in accordance with the adjoint relation between $(\dv,H_0(\dv))$ and $(\nabla,H^1)$. Namely, $V^{\rm CR}_h$ can be equivalently figured out as
\begin{equation}
V^{\rm CR}_h=\left\{v_h\ \mbox{is\ piecewise\ linear,\ such\ that}\  \sum_{T\in\mathcal{G}_h}\int_T\nabla v_h\utau{}_h+\int_Tv_h\dv\utau{}_h=0\ \forall\,\utau{}_h\in \uV{}^{\rm RT}_{h0}\right\}.
\end{equation}
This observation then hints quite a natural approach to construct a finite element space by constructing discrete adjoint relationships. By the aid of the existing conforming Whitney forms, in this paper, the methodology can be directly applied to design a family of nonconforming finite element spaces for $H\Lambda^k$ with piecewise Whitney forms. 
~\\

The approach of inheriting the adjoint relationship can actually lead to natural advantages. Several properties emerge naturally from the construction of the finite element spaces. A basic one is the consistency property, which follows directly. Then, cell-wise defined global interpolators can be constructed for functions in $\hlk$ with no extra regularity needed; the interpolators are stable in broken $\hlk$ norm and provide optimal approximation to all functions in $\hlk$. Combined with the global interpolators, these newly constructed spaces are connected by piecewise operations of $\od^k$ to form nonconforming finite element de Rham complexes, as well as commutative diagrams with the de Rham Hilbert complexes. Further, the Helmholtz and Hodge decompositions of the piecewise constant $k$-forms follow from the discrete adjoint relation. It is worth noting that the Poincar\'e-Leftschetz duality Theorem \ref{thm:displd} can be reconstructed by the respective discrete harmonic spaces by conforming and nonconforming finite element spaces. With the structural properties given in Section \ref{sec:ncdcx}, a framework of nonconforming finite element exterior calculus is established, and is naturally linked to the classical conforming one by the discrete complex duality \eqref{eq:complexduality} and the discrete Poincar\'e-Lefschetz duality.

On the other hand, we have to remark that, in contrast to the conforming Whitney forms, the nonconforming finite element spaces defined in this paper may not correspond to a ``finite element"(triple) in Ciarlet's sense~\cite{Ciarlet.P1978book}. Therefore, some basic features of the finite element methods cannot be dealt with in standard ways. Two main obstacles are: first, it is not any longer straightforward to figure out the basis functions of the global finite element spaces, and second, it is difficult, if not impossible, to follow the standard procedure to prove the uniform discrete Poincar\'e inequalities. In this paper, we develop nonstandard approaches to circumvent the obstacles. For every newly designed finite element space, we prove the existence of a set of basis functions which each is supported on no more than two cells, and the relevant numerical scheme can be implemented by the standard routine for the finite element in Ciarlet's sense. Some numerical experiments are provided to verify the implementability of the new finite element functions. We also prove that the constant of the discrete Poincar\'e inequality of a newly designed finite element space is asymptotically equal to that of an associated conforming Whitney form space which is proved uniformly bounded; it then follows that the discrete Poincar\'e inequality holds uniformly for the new spaces. 

Since nonconforming finite element spaces are constructed for $(\od^k,H\Lambda^k)$ and particularly discrete Hodge decompositions are constructed accordingly, new discretization schemes can be developed. Meanwhile, dual structures can be further investigated with more applications. We investigate the dual roles of conforming and nonconforming spaces by constructing some new finite element schemes for the Hodge Laplace problem with nonconforming spaces. The two finite element spaces connect with each other within their respective discretization schemes through classical mixed formulations, and their roles are complementary within the discretization scheme of a new mixed formulation.
~\\

The remainder of the paper is organized as follows. In the remaining part of this section, we collect some preliminaries and notations. In Section \ref{sec:newfem}, we use the two-dimensional $\Hdiv$ problem for instance to illustrate the main features of the new type of finite element spaces, including the construction of the new space, the locally-supported basis functions, the basic error estimation by cell-wise defined interpolators, and numerical experiments for the implementability of the new finite element functions. In Section \ref{sec:ncdcx}, a family of nonconforming finite element spaces are constructed for $H\Lambda^k$ in $\rn$, $0\leqslant k\leqslant n$, with the Crouzeix-Raviart element space being the one for $H\Lambda^0$. Optimal approximation and uniform Poincar\'e inequalities are established. Based on these finite element spaces, theory of nonconforming finite element exterior calculus is constructed, including the Helmholtz/Hodge decomposition for piecewise constant $k$-forms, the discrete Poincar\'e-Lefschetz duality,  the discrete de Rham complex and commutative diagrams. Then in Section \ref{sec:dishl}, the newly-designed nonconforming spaces are used for the discretization of the Hodge Laplace problem. The correspondent and complementary connections between the conforming and nonconforming spaces are investigated with classical and new mixed formulations. Finally, in Section \ref{sec:conc}, some conclusions and discussions are given. 
~\\

\paragraph{\bf Preliminaries and Notations} In the sequel of the paper, we use $\N$ and $\R$ to denote the null space and the range of certain operators. Namely, for example, $\N(\oT,\xD)$ denotes $\left\{\xv\in\xD:\oT\xv=0\right\}$, and $\R(\oT,\xD)$ denotes $\left\{\oT\xv:\xv\in\xD\right\}.$ For a Hilbert space $\xH$, we use the notations $\opp_\xH$ and $\omp_\xH$ to denote the orthogonal summation and orthogonal difference; namely, for two spaces $\xA$ and $\xB$ in $\xH$, the presentation $\xA\opp_\xH\xB$ implies that $\xA$ and $\xB$ are orthogonal in $\xH$, and evaluates as the direct summation of $\xA$ and $\xB$; for $\xA\subset\xB\subset \xH$, $\xB\omp_\xH\xA$ evaluates as the orthogonal complementation of $\xA$ in $\xB$. The subscript $\xH$ can occasionally be dropped. 

For $\Omega$ a domain and $T\subset \Omega$, we use $E_T^\Omega:L^1(T)\to L^1(\Omega)$ for the extension operator defined by $E_T^\Omega v=v$ on $T$ and $E_T^\Omega v=0$ elsewhere. For $V_T\subset L^1(T)$, we use $E_T^\Omega V_T$ for short of $\R(E_T^\Omega,V_T)$.

We use $\od^k$ and $\odelta_k$ for the exterior {\it differential} and {\it codifferential} operators on $\Lambda^k$. $\odelta_k=(-1)^{kn}\star\od^{n-k}\star$, $\star$ being the Hodge star operator. Denote, on the domain $\Xi$,
$$
H\Lambda^k(\Xi):=\left\{\fomega\in L^2\Lambda^k(\Xi):\od^k\fomega\in L^2\Lambda^{k+1}(\Xi)\right\},\ \ \ 0\leqslant k\leqslant n-1,
$$
and by $H_0\Lambda^k(\Xi)$ the closure of $\mathcal{C}_0^\infty\Lambda^k(\Xi)$ in $H\Lambda^k(\Xi)$. Denote 
$$
H^*\Lambda^k(\Xi):=\left\{\fmu\in L^2\Lambda^k(\Xi):\odelta_k\fmu\in L^2\Lambda^{k-1}(\Xi)\right\},\ \ \ 1\leqslant k\leqslant n,
$$
and $H^*_0\Lambda^k(\Xi)$ the closure of $\mathcal{C}_0^\infty\Lambda^k(\Xi)$ in $H^*\Lambda^k(\Xi)$. $\Xi$ can occasionally be dropped. The spaces of harmonic forms are $\hf\Lambda^k:=\N(\od^k,H\Lambda^k)\omp\R(\od^{k-1},H\Lambda^{k-1})$, $\hf_0\Lambda^k:=\N(\od^k,H_0\Lambda^k)\omp\R(\od^{k-1},H_0\Lambda^{k-1})$, $\hf^*\Lambda^k:=\N(\odelta_k,H^*\Lambda^k)\omp\R(\odelta_{k+1},H^*\Lambda^{k+1})$, and $\hf^*_0\Lambda^k:=\N(\odelta_k,H^*_0\Lambda^k)\omp\R(\odelta_{k+1},H^*_0\Lambda^{k+1})$. As the Helmholtz decompositions hold that
$$
\N(\od^k,H\Lambda^k)\opp \R(\odelta_{k+1},H^*_0\Lambda^{k+1})=L^2\Lambda^k=\R(\od^{k-1},H\Lambda^{k-1})\opp\N(\odelta_k,H^*_0\Lambda^k),
$$
it follows that $\hf\Lambda^k=\hf^*_0\Lambda^k$ and $\hf_0\Lambda^k=\hf^*\Lambda^k$. This is the Poincar\'e-Lefschetz duality(cf.  \cite[Section 4.5.5]{Arnold.D2018feec}) which links two dual complexes connected by $\od^k$ and $\odelta_k$, respectively.

The space of Whitney forms is denoted as (\cite{Arnold.D;Falk.R;Winther.R2006acta,Arnold.D;Falk.R;Winther.R2010bams,Arnold.D2018feec})  $\displaystyle\mathcal{P}^-_1\Lambda^k=\mathcal{P}_0\Lambda^k+\okappa(\mathcal{P}_0\Lambda^{k+1}),$ where the Koszul operator $\okappa$ is $\displaystyle\okappa(\dx^{\ixalpha_1}\wedge\dots\wedge\dx^{\ixalpha_k}):=\sum_{j=1}^k(-1)^{j+1}x^{\ixalpha_j}\dx^{\ixalpha_1}\wedge\dots\wedge\dx^{\ixalpha_{j-1}}\wedge\dx^{\ixalpha_{j+1}}\wedge\dots\wedge\dx^{\ixalpha_k}$ for 
$\displaystyle\ixalpha\in\mathbb{IX}_{k,n}:=\left\{\ixalpha=(\ixalpha_1,\dots,\ixalpha_k)\in\mathbb{N}^k:1\leqslant \ixalpha_1<\ixalpha_2<\dots<\ixalpha_k\leqslant n,\ \mathbb{N}\ \mbox{the\ set\ of\ integers}\right\}$, the set of $k$-indices, $k\leqslant n$. Note that $\mathcal{P}^-_1\Lambda^0=\mathcal{P}_1\Lambda^0$ and $\mathcal{P}^-_1\Lambda^n=\mathcal{P}_0\Lambda^n$. Denote the Whitney forms associated with the operator $\odelta_k$ by
$\displaystyle\mathcal{P}^{*,-}_1\Lambda^k:=\star(\mathcal{P}^-_1\Lambda^{n-k}).$ Note that 
\begin{equation}\label{eq:n=r=c}
\N(\od^k,\mathcal{P}^-_1\Lambda^k)=\R(\od^{k-1},\mathcal{P}^-_1\Lambda^{k-1})=\mathcal{P}_0\Lambda^k=\R(\odelta_{k+1},\mathcal{P}^{*,-}_1\Lambda^{k+1})=\N(\odelta_k,\mathcal{P}^{*,-}_1\Lambda^k).
\end{equation}

Denote, on a simplicial subdivision $\mathcal{G}_h$ of $\Omega$, $0\leqslant k\leqslant n$,
\begin{equation} 
\displaystyle\mathcal{P}^-_1\Lambda^k(\mathcal{G}_h):=\bigoplus_{T\in\mathcal{G}_h}E_T^\Omega \mathcal{P}^-_1\Lambda^k(T), \ \mbox{and}\ \ \displaystyle\mathcal{P}^{*,-}_1\Lambda^k(\mathcal{G}_h):=\bigoplus_{T\in\mathcal{G}_h}E_T^\Omega\mathcal{P}^{*,-}_1\Lambda^k(T).
\end{equation}
Here and in the sequel, the subscript $``\cdot_h"$ denotes mesh dependence. In particular, an operator with the subscript $``\cdot_h"$ indicates that the operation is performed cell by cell. 

The conforming finite element spaces with Whitney forms are
$\fW_h\Lambda^k:=\mathcal{P}^-_1(\mathcal{G}_h)\cap H\Lambda^k$, $\fW_{h0}\Lambda^k:=\mathcal{P}^-_1(\mathcal{G}_h)\cap H_0\Lambda^k$, $\fW^*_h\Lambda^k:=\mathcal{P}^{*,-}_1(\mathcal{G}_h)\cap H^*\Lambda^k$, and $\fW^*_{h0}\Lambda^k:=\mathcal{P}^{*,-}_1(\mathcal{G}_h)\cap H^*_0\Lambda^k.$ Note that the spaces defined this way are respectively identical to the finite element spaces with piecewise Whitney forms defined by the continuity of the nodal parameters~\cite{Arnold.D2018feec}. Denote the spaces of discrete harmonic forms by 
$\hf_h\Lambda^k:=\mathcal{N}(\od^k,\mathbf{W}_h\Lambda^k)\omp \mathcal{R}(\od^{k-1},\mathbf{W}_h\Lambda^{k-1})$, $\hf_{h0}\Lambda^k:=\mathcal{N}(\od^k,\mathbf{W}_{h0}\Lambda^k)\omp \mathcal{R}(\od^{k-1},\mathbf{W}_{h0}\Lambda^{k-1})$, $\hf^*_h\Lambda^k:=\mathcal{N}(\odelta_k,\mathbf{W}_h^*\Lambda^k)\omp \mathcal{R}(\odelta_{k+1},\mathbf{W}_h^*\Lambda^{k+1})$, and $\hf^*_{h0}\Lambda^k:=\mathcal{N}(\odelta_k,\mathbf{W}_{h0}^*\Lambda^k)\omp \mathcal{R}(\odelta_{k+1},\mathbf{W}_{h0}^*\Lambda^{k+1})$.

Given $T$ a simplex, denote, associated with $T$, $\tilde{x}^j=x^j-c_j$, where $c_j$ is a constant such that $\int_T\tilde{x}^j=0$, and $\okappa_T$, a Koszul operator on $T$, by for $\ixalpha\in\mathbb{IX}_{k,n}$ 
$$
\okappa_T(\dx^{\ixalpha_1}\wedge\dots\wedge\dx^{\ixalpha_k}):=\sum_{j=1}^k(-1)^{(j+1)}\tilde{x}^{\ixalpha_j}\dx^{\ixalpha_1}\wedge\dots\dx^{\ixalpha_{j-1}}\wedge\dx^{\ixalpha_{j+1}}\wedge\dots\wedge\dx^{\ixalpha_k}.
$$
Then $\od^{k-1}\okappa_T(\dx^{\ixalpha_1}\wedge\dots\dx^{\ixalpha_k})=k\dx^{\ixalpha_1}\wedge\dots\dx^{\ixalpha_k}. $ By the aid of $\okappa_T$, we can rewrite the Whitney forms as $\mathcal{P}^-_1\Lambda^k(T)=\mathcal{P}_0\Lambda^k(T)\opp\okappa_T(\mathcal{P}_0\Lambda^{k+1}(T))$, {orthogonal\ in}\ $L^2\Lambda^k(T)$. We further use $\okappa_h$ to denote the operation of $\okappa_T$ cell by cell. Denote $\okappa^\delta:=\star\circ\okappa\circ\star$, $\okappa^\delta_T:=\star\circ\okappa_T\circ\star$, and $\okappa^\delta_h:=\star\circ\okappa_h\circ\star$.

\section{A nonconforming $H(\dv)$ finite element space}
\label{sec:newfem}

In this section, we use the two-dimensional $\Hdiv$ problem for instance to illustrate the main features of the new type of finite element spaces studied in this paper. 

Let $\Omega\subset\mathbb{R}^2$ denote a polygon. As usual, we use $\nabla$ and $\dv$ to denote the gradient operator and divergence operator, respectively, and we use $H^1(\Omega)$, $H^1_0(\Omega)$, $H(\dv,\Omega)$, $H_0(\dv,\Omega)$, $L^2(\Omega)$ and $L^2_0(\Omega)$ to denote certain Sobolev (Lebesgue) spaces. For here, we denote vector-valued quantities by undertilde $``\undertilde{\cdot}"$. We use $(\cdot,\cdot)$ with subscripts to represent $L^2$ inner product. 

For this planar domain, we specifically use $\mathcal{T}_h$ for a shape-regular subdivision of $\Omega$ with mesh size $h$ that consists of triangles, such that $\overline\Omega=\cup_{T\in\mathcal{T}_h}\overline T$ and every boundary vertex is connected to at least one interior vertex. Denote by $\mathcal{E}_h$, $\mathcal{E}_h^i$, $\mathcal{E}_h^b$, $\mathcal{X}_h$, $\mathcal{X}_h^i$ and $\mathcal{X}_h^b$ the set of edges, interior edges, boundary edges, vertices, interior vertices and boundary vertices, respectively. We use $\mathbf{n}$ for the outward unit normal vector with respect to a triangle.

Let $\mathbb{V}^1_h$ denote the continuous piecewise linear element space, and $\uV^{\rm RT}_h$ denote the Raviart-Thomas \cite{Raviart.P;Thomas.J1977} element space of lowest degree on $\mathcal{T}_h$. Denote $\mathbb{V}^1_{h0}:=\mathbb{V}^1_h\cap H^1_0(\Omega)$ and $\uV^{\rm RT}_{h0}:=\uV^{\rm RT}_h\cap H_0(\dv,\Omega)$.  On a triangle $T$, denote the space of the lowest-degree Raviart-Thomas shape functions by $\mathbb{RT}(T):={\rm span}\left\{\ualpha+\beta\ux:\ualpha\in\mathbb{R}^2,\beta\in\mathbb{R}\right\}$. Then
\begin{equation}\label{eq:nabladivdual}
\mathcal{R}(\dv,\mathbb{RT}(T))=\mathbb{R}=\mathcal{N}(\nabla,P_1(T)),\ \ \ \mbox{and}\  \ \mathcal{N}(\dv,\mathbb{RT}(T))=\mathbb{R}^2=\mathcal{R}(\nabla,P_1(T)).
\end{equation}
Denote $\displaystyle\mathbb{RT}(\mathcal{T}_h):=\bigoplus_{T\in\mathcal{T}_h}E_T^\Omega\mathbb{RT}(T)$. We define the nonconforming finite element spaces
\begin{equation}\label{eq:2drtabc}
\mathbb{RT}^{\rm nc}_h:=\left\{\utau{}_h\in \mathbb{RT}(\mathcal{T}_h):\sum_{T\in\mathcal{T}_h}(\utau{}_h,\nabla \xv_h)_T+(\dv\utau{}_h,\xv_h)_T=0,\ \forall\,\xv_h\in \mathbb{V}^1_{h0}\right\},
\end{equation}
and
\begin{equation}\label{eq:2drt0abc}
\mathbb{RT}^{\rm nc}_{h0}:=\left\{\utau{}_h\in \mathbb{RT}(\mathcal{T}_h):\sum_{T\in\mathcal{T}_h}(\utau{}_h,\nabla \xv_h)_T+(\dv\utau{}_h,\xv_h)_T=0,\ \forall\,\xv_h\in \mathbb{V}^1_h\right\}.
\end{equation}

Note that $\mathbb{RT}^{\rm nc}_h$ does not confirm to Ciarlet's finite element definition. In Section \ref{sec:locbrtabc}, we will present sets of locally supported basis functions for each of $\mathbb{RT}^{\rm nc}_h$ and $\mathbb{RT}^{\rm nc}_{h0}$ for their implementability. In Section \ref{subsec:intrt}, we establish a cell-wise defined projective interpolator for $\Hdiv$, and prove optimal approximation and stability properties of $\mathbb{RT}^{\rm nc}_h$ and $\mathbb{RT}^{\rm nc}_{h0}$ directly without the aid of the classical Raviart-Thomas element.

\subsection{Locally supported global basis functions of $\mathbb{RT}^{\rm nc}_{h0}$ and $\mathbb{RT}^{\rm nc}_h$}
\label{sec:locbrtabc}

\subsubsection{Structures of $\mathbb{RT}(T)$ on a triangle $T$ and $\mathbb{RT}(\mathcal{T}_h)$ on $\mathcal{T}_h$}
For a cell $T\in\mathcal{T}_h$, we use $a_i$ (located at $\ua{}_i$) and  $e_i$ for the vertices and opposite edges, $h_i$ being the height on $e_i$, $i=1:3$. Let $\lambda_i$ be the barycentric coordinates. Let $|e_i|$ and $|h_i|$ denote the length of $e_i$ and $h_i$, respectively, and let $S$ denote the area; cf. Figure \ref{label:threebasis}. 
\begin{figure}[htbp]
\begin{tikzpicture}[scale=0.6]

\path 	coordinate (a1) at (4,3)
coordinate (a2) at (0,0)
coordinate (a3) at (5,0)
coordinate (b1) at ($ (0,0)!.5!(5,0) $)
coordinate (b2) at ($ (4,3)!.5!(5,0) $)
coordinate (b3) at ($ (0,0)!.5!(4,3) $)
coordinate (O) at ($ (a1)!.6!(b1) $)
coordinate (c1) at ($ (b1)!.5cm!-90:(a3) $)
coordinate (c2) at ($ (b2)!.5cm!-90:(a1) $)
coordinate (c3) at ($ (b3)!.5cm!-90:(a2) $)
coordinate (d1) at ($ (b1)!1cm!0:(a3) $)
coordinate (d2) at ($ (b2)!1cm!0:(a1) $)
coordinate (d3) at ($ (b3)!1cm!0:(a2) $);

\draw[line width=.4pt]  (a1) -- (a2) -- (a3) -- cycle;

		\draw [->]    (b1) -- (c1);
		\draw [->]    (b2) -- (c2);
		\draw [->]    (b3) -- (c3);
		
		\draw [->]    (b1) -- (d1);
		\draw [->]    (b2) -- (d2);
		\draw [->]    (b3) -- (d3);

		\draw[fill] (a1) circle [radius=0.02];
		\draw[fill] (a2) circle [radius=0.02];
		\draw[fill] (a3) circle [radius=0.02];
		
		\node[above] at (a1) {$a_1$};
		\node[left] at (a2) {$a_2$};
		\node[right] at (a3) {$a_3$};

		\node[below] at (c1) {$-1/|e_1|$};
		
		\node[right] at (c2) {$+1/|e_2|$};
		
		\node[left] at ($(a2)!0.67!8:(a1)$) {$+1/|e_3|$};

		\node[above right] at (b1) {$e_1$};
		\node[left] at (b2) {$e_2$};
		\node[below] at ($(a2)!0.53!(a1)$) {$e_3$};
		
		\node at(3,-0.35){$\mathbf{n}_1$};
		\node at(4.85,1.95){$\mathbf{n}_2$};
		\node at(1.5,1.55){$\mathbf{n}_3$};

		\node at (O) {$\undertilde{b}_T^{a_1}$};
	
\end{tikzpicture}
\hspace{-0.4in}
\begin{tikzpicture}[scale=0.6]

\path 	coordinate (a1) at (4,3)
coordinate (a2) at (0,0)
coordinate (a3) at (5,0)
coordinate (b1) at ($ (0,0)!.5!(5,0) $)
coordinate (b2) at ($ (4,3)!.5!(5,0) $)
coordinate (b3) at ($ (0,0)!.5!(4,3) $)
coordinate (O) at ($ (a1)!.6!(b1) $)
coordinate (c1) at ($ (b1)!.5cm!-90:(a3) $)
coordinate (c2) at ($ (b2)!.5cm!-90:(a1) $)
coordinate (c3) at ($ (b3)!.5cm!-90:(a2) $)
coordinate (d1) at ($ (b1)!1cm!0:(a3) $)
coordinate (d2) at ($ (b2)!1cm!0:(a1) $)
coordinate (d3) at ($ (b3)!1cm!0:(a2) $);

\draw[line width=.4pt]  (a1) -- (a2) -- (a3) -- cycle;

		\draw [->]    (b1) -- (c1);
		\draw [->]    (b2) -- (c2);
		\draw [->]    (b3) -- (c3);
		
		\draw [->]    (b1) -- (d1);
		\draw [->]    (b2) -- (d2);
		\draw [->]    (b3) -- (d3);

		\draw[fill] (a1) circle [radius=0.02];
		\draw[fill] (a2) circle [radius=0.02];
		\draw[fill] (a3) circle [radius=0.02];
		
		\node[above] at (a1) {$a_1$};
		\node[left] at (a2) {$a_2$};
		\node[right] at (a3) {$a_3$};

		\node[below] at (c1) {$+1/|e_1|$};
		
		\node at (c2) {$\ \ \ \ \ -1/|e_2|$};
		
		\node[left] at ($(a2)!0.67!8:(a1)$) {$+1/|e_3|$};

		\node[above right] at (b1) {$e_1$};
		\node[left] at (b2) {$e_2$};
		\node[below] at ($(a2)!0.53!(a1)$) {$e_3$};

		\node at (O) {$\undertilde{b}_T^{a_2}$};
	
\end{tikzpicture}
\hspace{-0.3in}
\begin{tikzpicture}[scale=0.6]

\path 	coordinate (a1) at (4,3)
coordinate (a2) at (0,0)
coordinate (a3) at (5,0)
coordinate (b1) at ($ (0,0)!.5!(5,0) $)
coordinate (b2) at ($ (4,3)!.5!(5,0) $)
coordinate (b3) at ($ (0,0)!.5!(4,3) $)
coordinate (O) at ($ (a1)!.6!(b1) $)
coordinate (c1) at ($ (b1)!.5cm!-90:(a3) $)
coordinate (c2) at ($ (b2)!.5cm!-90:(a1) $)
coordinate (c3) at ($ (b3)!.5cm!-90:(a2) $)
coordinate (d1) at ($ (b1)!1cm!0:(a3) $)
coordinate (d2) at ($ (b2)!1cm!0:(a1) $)
coordinate (d3) at ($ (b3)!1cm!0:(a2) $);

\draw[line width=.4pt]  (a1) -- (a2) -- (a3) -- cycle;

		\draw [->]    (b1) -- (c1);
		\draw [->]    (b2) -- (c2);
		\draw [->]    (b3) -- (c3);
		
		\draw [->]    (b1) -- (d1);
		\draw [->]    (b2) -- (d2);
		\draw [->]    (b3) -- (d3);

		\draw[fill] (a1) circle [radius=0.02];
		\draw[fill] (a2) circle [radius=0.02];
		\draw[fill] (a3) circle [radius=0.02];
		
		\node[above] at (a1) {$a_1$};
		\node[left] at (a2) {$a_2$};
		\node[right] at (a3) {$a_3$};

		\node[below] at (c1) {$+1/|e_1|$};
		
		\node[right] at (c2) {$+1/|e_2|$};
		
		\node[left] at ($(a2)!0.67!8:(a1)$) {$-1/|e_3|$};

		\node[above right] at (b1) {$e_1$};
		\node[left] at (b2) {$e_2$};
		\node[below] at ($(a2)!0.53!(a1)$) {$e_3$};

		\node at (O) {$\undertilde{b}_T^{a_3}$};
	
\end{tikzpicture}
\includegraphics[width=0.32\textwidth]{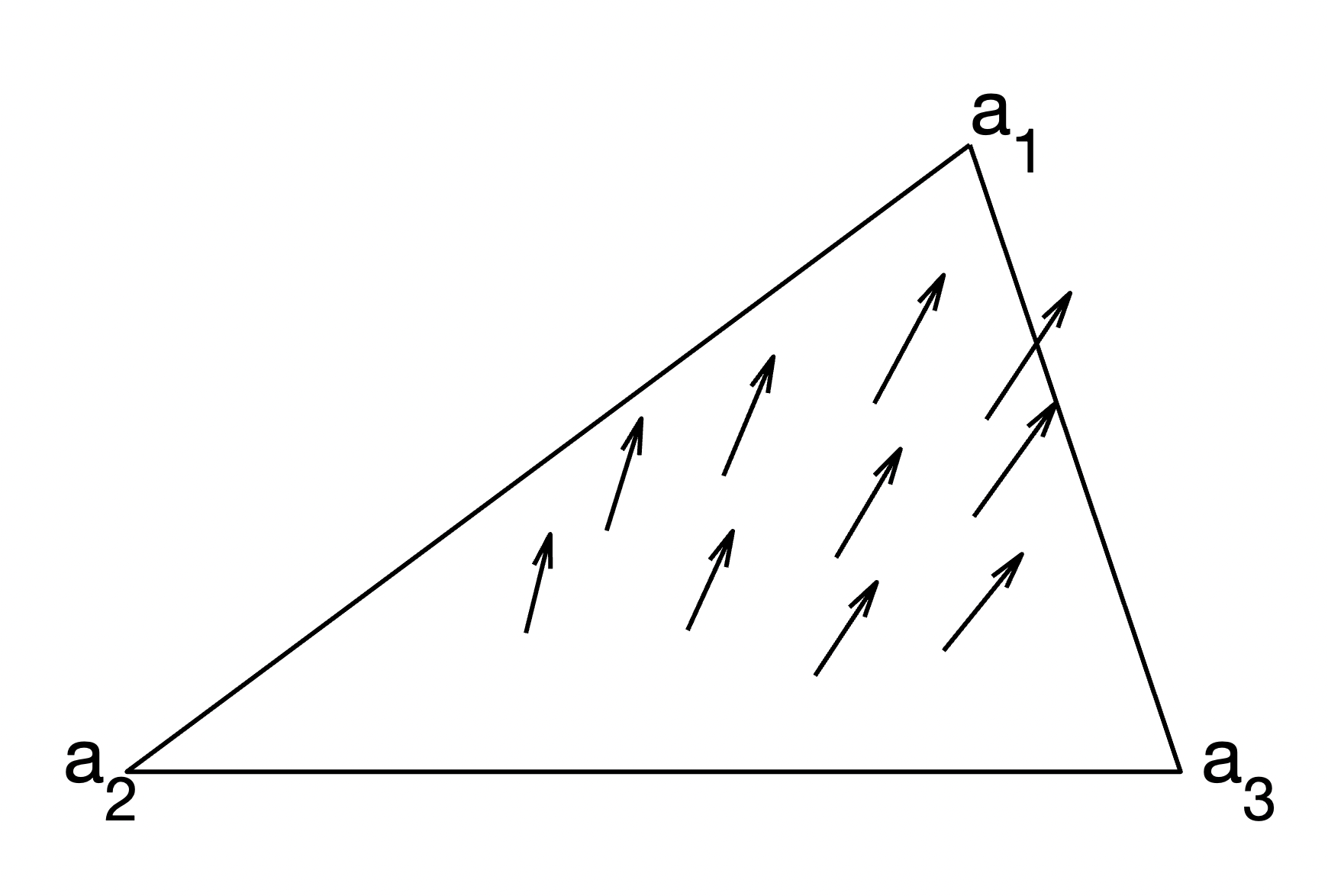}
\includegraphics[width=0.32\textwidth]{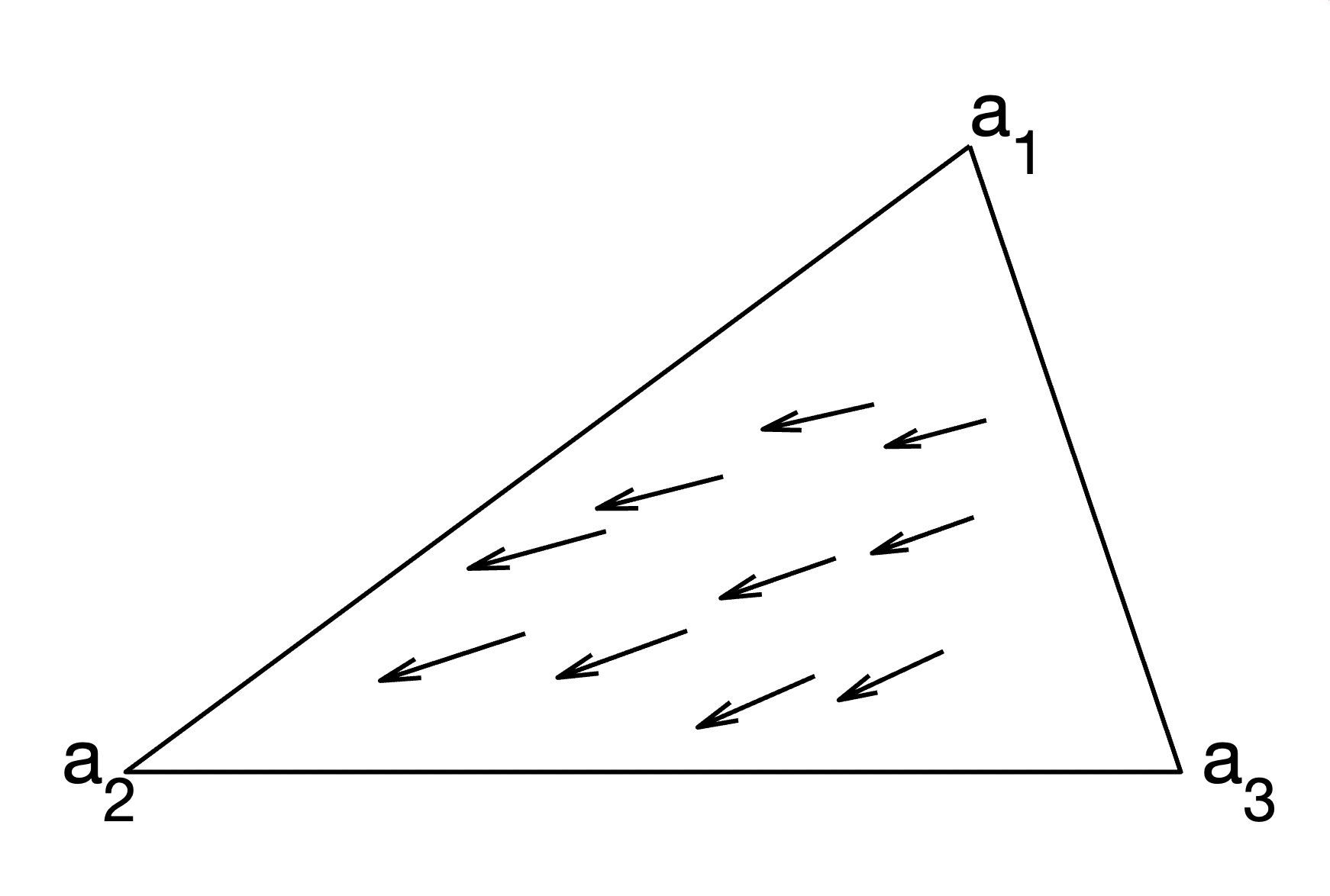}
\includegraphics[width=0.32\textwidth]{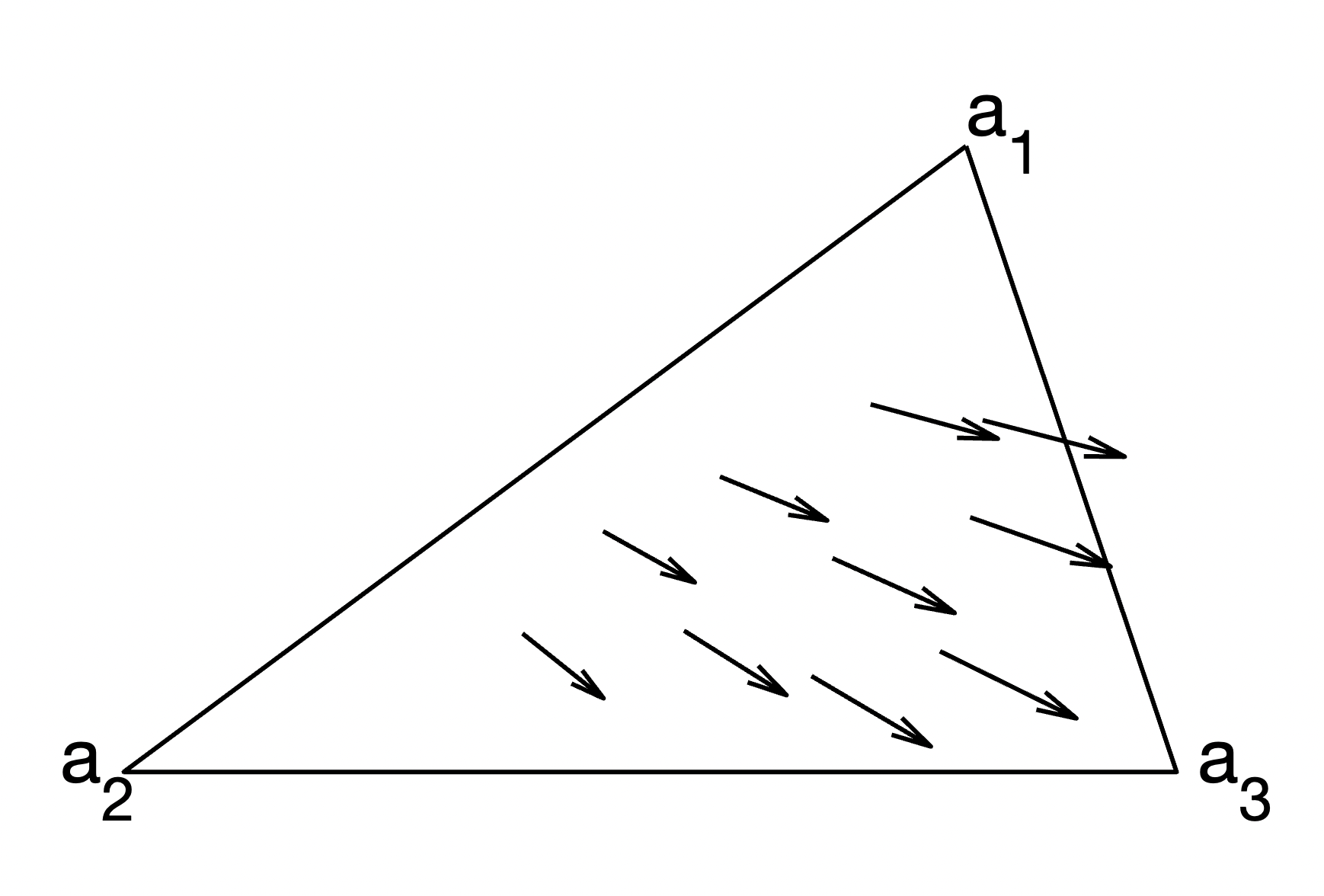}
\caption{Illustration of the three basis functions of $\mathbb{RT}(T)$ on a cell $T$. We pay particular attention to the sign of the outward normal component at  every edge.}\label{label:threebasis}
\end{figure}

Denote
\begin{equation}\label{eq:localbi}
\undertilde{b}_T^{a_i}:=\frac{1}{2S}(\undertilde{x}+\undertilde{a}_i-\undertilde{a}_j-\undertilde{a}_k),\ \ i=1,2,3,\ \left\{i,j,k\right\}=\left\{1,2,3\right\}.
\end{equation}
Then, $\left\{\undertilde{b}_T^{a_i},i=1,2,3\right\}$ form a basis of $\mathbb{RT}(T)$. Particularly, $\undertilde{b}_T^{a_i}\cdot\mathbf{n}_j|_{e_j}=(1-2\delta_{ij})/|e_j|$, and 
\begin{equation}\label{eq:dualityblambda}
(\undertilde{b}_T^{a_i},\nabla\lambda_j)_T+(\dv\undertilde{b}_T^{a_i},\lambda_j)_T=\delta_{ij},\ \ 1\leqslant i,j\leqslant 3. 
\end{equation}
The identities \eqref{eq:nabladivdual} confirm the existence of a basis of $\mathbb{RT}(T)$ that satisfies the dual relation \eqref{eq:dualityblambda}, and \eqref{eq:localbi} further gives the precise formulation of them. See Figure \ref{label:threebasis} for the illustrations and profiles of the local basis functions. Then 
$$
\displaystyle \mathbb{RT}(\mathcal{T}_h) = \bigoplus_{T\in\mathcal{T}_h}E_T^\Omega\mathbb{RT}(T)
=\bigoplus_{T\in\mathcal{T}_h}\bigoplus_{M\in \mathcal{X}_h\cap\partial T}{\rm span}\left\{E_T^\Omega\ub{}_T^M\right\}=\bigoplus_{M\in\mathcal{X}_h}\bigoplus_{\partial T\ni M}{\rm span}\left\{E_T^\Omega\ub{}_T^M\right\}.
$$

\subsubsection{Two types of basis functions in $\mathbb{RT}^{\rm nc}_{h0}$ and $\mathbb{RT}^{\rm nc}_h$}
\label{sec:rtabcrtabc0}

For $M\in\mathcal{X}_h$, denote by $\psi_{M}$ the basis function of $\mathbb{V}^1_h$ such that $\psi_M(M)=1$ and $\psi_M$ vanishes on other vertices. We can rewrite \eqref{eq:dualityblambda} to the lemma below. 
\begin{lemma}\label{lem:localorth}
For $M,M'\in\mathcal{X}_h$ and $T,T'\in\mathcal{T}_h$, such that $M\in\partial T$ and $M'\in\partial T'$, with
$$
\delta_{MM'}\ \mbox{denoting}\ \left\{\begin{array}{ll}1,&M=M'\\ 0,&M\neq M'\end{array}\right.\ \ \mbox{and}\ \  \delta_{TT'}\ \mbox{denoting}\ \left\{\begin{array}{ll}1,&T=T'\\ 0,&T\neq T'\end{array}\right.,
$$
it holds that
$$
(E_T^\Omega\ub{}_T^M,\nabla \psi_{M'})_{T'}+(\dv E_T^\Omega\ub{}_T^M,\psi_{M'})_{T'}=\delta_{MM'}\delta_{TT'}.
$$
\end{lemma}

Denote, for $M\in\mathcal{X}_h$,
\begin{equation}
\mathcal{B}_M:=\left\{\utau{}_h\in \bigoplus_{\partial T\ni M}{\rm span}\left\{E_T^\Omega\ub{}_T^M\right\}:\sum_{T\in\mathcal{T}_h}(\utau{}_h,\nabla \psi_M)_T+(\dv\utau{}_h,\psi_M)_T=0\right\},
\end{equation}
and
\begin{equation}
\mathcal{C}_M:=\bigoplus_{\partial T\ni M}{\rm span}\left\{E_T^\Omega\ub{}_T^M\right\}.
\end{equation}
Then $\mathcal{B}_M\subset\mathcal{C}_M$. We present the structures of $\mathbb{RT}^{\rm nc}_{h0}$ and $\mathbb{RT}^{\rm nc}_h$ in the lemma below.
\begin{lemma}\label{lem:basisrtabc}
\begin{enumerate}
\item If $M\neq N\in\mathcal{X}_h$, $\mathcal{C}_M\cap\mathcal{C}_N=\{0\}$;
\item 
$\displaystyle\mathbb{RT}^{\rm nc}_{h0}=\bigoplus_{M\in\mathcal{X}_h}\mathcal{B}_M;$
\item $\displaystyle \mathbb{RT}^{\rm nc}_h=\left[\bigoplus_{M\in\mathcal{X}_h^b}\mathcal{C}_M\right]\oplus\left[\bigoplus_{M\in\mathcal{X}_h^i}\mathcal{B}_M\right].$
\end{enumerate}
\end{lemma}
\begin{proof}
~~The first item follows directly by definition. 
For the second, by \eqref{eq:2drt0abc} and Lemma \ref{lem:localorth},
{
\begin{multline*}
\mathbb{RT}^{\rm nc}_{h0}=\left\{\utau{}_h\in \bigoplus_{M\in\mathcal{X}_h}\bigoplus_{\partial T\ni M}{\rm span}\left\{E_T^\Omega\ub{}_T^M\right\}: \sum_{T\in\mathcal{T}_h}(\utau{}_h,\nabla \psi_N)_T+(\dv\utau{}_h,\psi_N)_T=0,\ \forall\, N\in\mathcal{X}_h\right\}
\\
= \bigoplus_{M\in\mathcal{X}_h}\left\{\utau{}_h\in \bigoplus_{\partial T\ni M}{\rm span}\left\{E_T^\Omega\ub{}_T^M\right\}:\sum_{T\in\mathcal{T}_h,\partial T\ni M}(\utau{}_h,\nabla \psi_M)_T+(\dv\utau{}_h,\psi_M)_T=0\right\},
\end{multline*}
}
and the second item follows. For the third,
\begin{multline*}
\mathbb{RT}^{\rm nc}_h=\left\{\utau{}_h\in \bigoplus_{M\in\mathcal{X}_h}\bigoplus_{\partial T\ni M}{\rm span}\left\{E_T^\Omega\ub{}_T^M\right\}: \sum_{T\in\mathcal{T}_h}(\utau{}_h,\nabla \psi_N)_T+(\dv\utau{}_h,\psi_N)_T=0,\ \forall\, N\in\mathcal{X}_{h0}\right\}
\\
= \left\{\utau{}_h\in \bigoplus_{M\in\mathcal{X}_h^i}\bigoplus_{\partial T\ni M}{\rm span}\left\{E_T^\Omega\ub{}_T^M\right\}:\sum_{T\in\mathcal{T}_h}(\utau{}_h,\nabla \psi_N)_T+(\dv\utau{}_h,\psi_N)_T=0,\ \forall\, N\in\mathcal{X}_{h0}\right\} 
\\
\bigoplus \left\{\utau{}_h\in \bigoplus_{M\in\mathcal{X}_h^b}\bigoplus_{\partial T\ni M}{\rm span}\left\{E_T^\Omega\ub{}_T^M\right\}\right\}
\\
= \left[\bigoplus_{M\in\mathcal{X}_h^i}\left\{\utau{}_h\in \bigoplus_{\partial T\ni M}{\rm span}\left\{E_T^\Omega\ub{}_T^M\right\}:\sum_{T\in\mathcal{T}_h}(\utau{}_h,\nabla \psi_M)_T+(\dv\utau{}_h,\psi_M)_T=0\right\}\right] 
\\
\bigoplus  \left[\bigoplus_{M\in\mathcal{X}_h^b}\bigoplus_{\partial T\ni M}{\rm span}\left\{E_T^\Omega\ub{}_T^M\right\}\right].
\end{multline*}
The third item follows. This completes the proof. 
\end{proof}

\subsubsection{Profiles of $\mathcal{B}_M$ and $\mathcal{C}_M$}

\begin{lemma}
Given a vertex $M$ that is shared by $m$ triangles, $\dim(\mathcal{B}_M)=m-1$. A minimal support for a function in $\mathcal{B}_M$ is a combination of two cells.
\end{lemma}

\begin{proof}
~~The support of $\psi_M$ consists of $m$ triangles. Denote by $T_i$, $1\leqslant i\leqslant m$, the $m$ triangles that share $M$. The basis functions in $\mathcal{B}_M$ then take the form $\displaystyle\sum_{i=1}^m\gamma_i\undertilde{b}_{T_i}^M$, satisfying 
\begin{equation}\label{eq:basisBM}
\sum_{i=1}^m\left[(\gamma_i\undertilde{b}_{T_i}^M,\nabla(\psi_M|_{T_i}))_{T_i}+(\gamma_i\dv\undertilde{b}_{T_i}^M,\psi_M|_{T_i})_{T_i}\right]=0. 
\end{equation}
By \eqref{eq:dualityblambda}, this equation admits $(m-1)$ linearly independent solutions, and every corresponding function can be supported on two cells. Particularly, we assign the two cells to be adjacent. Figure \ref{fig:globalbasis} illustrates the profile of a basis function. 
\begin{figure}[htbp]
\begin{center}
\includegraphics[width=0.45\textwidth]{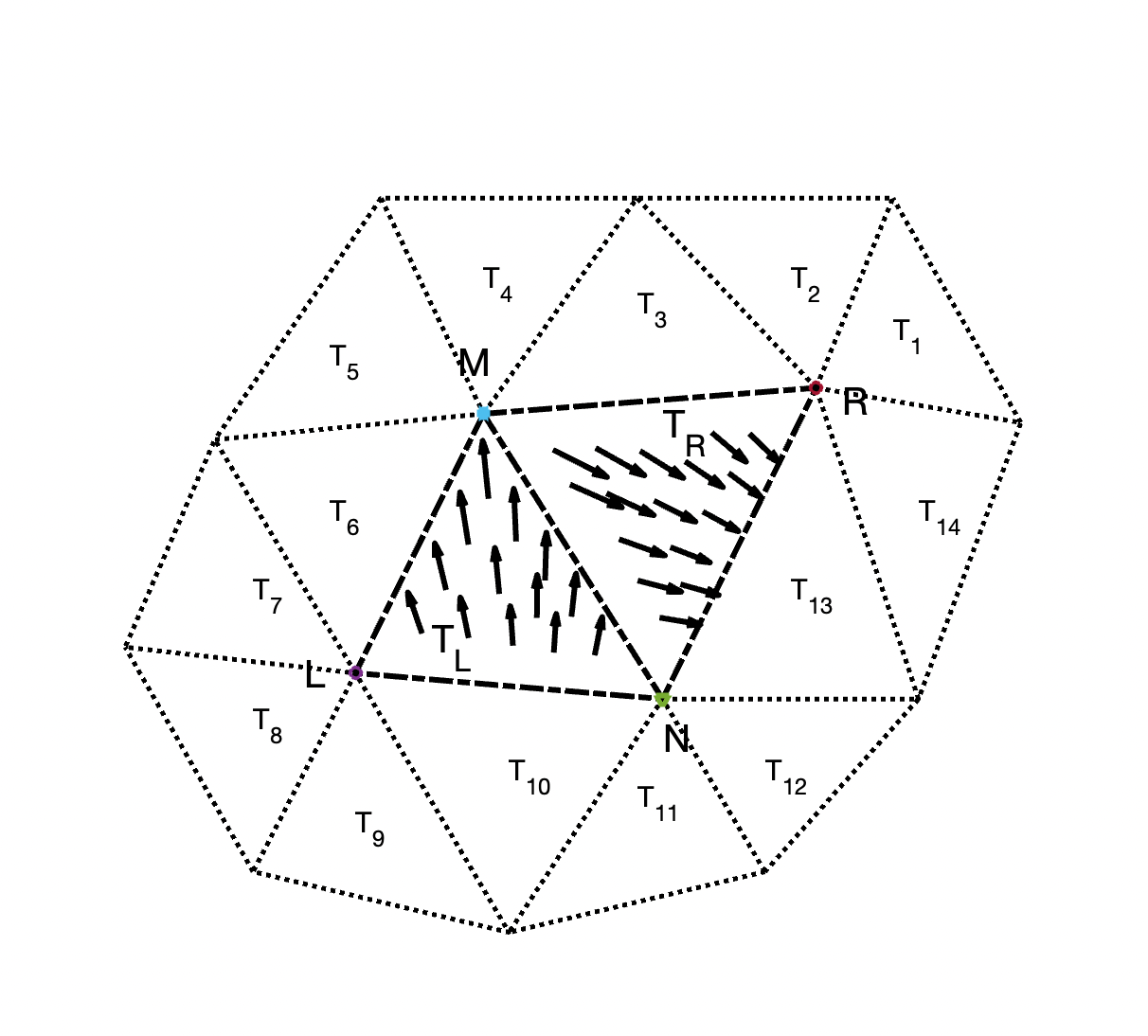}
\end{center}
\caption{Profile of a {\bf global} basis functions in $\mathcal{B}_M$, supported on two adjacent cells. }\label{fig:globalbasis}
\end{figure}

The function as illustrated in Figure \ref{fig:globalbasis}, denoted by $\utau$, is
$$
\utau=\ub{}_{T_L}^M\ \mbox{on}\ T_L,\ \ \utau=-\ub{}_{T_R}^M\ \mbox{on}\ T_R,\ \mbox{and}\ \utau=0,\ \mbox{elsewhere}.
$$
By \eqref{eq:dualityblambda}, on $T_L$, 
$\displaystyle(\utau,\nabla\psi_M)_{T_L}+(\dv\utau,\psi_M)_{T_L}=1$, $\displaystyle(\utau,\nabla\psi_L)_{T_L}+(\dv\utau,\psi_L)_{T_L}=0,$ and $\displaystyle(\utau,\nabla\psi_N)_{T_L}+(\dv\utau,\psi_N)_{T_L}=0$; on $T_R$, $\displaystyle(\utau,\nabla\psi_M)_{T_R}+(\dv\utau,\psi_M)_{T_R}=-1$, $\displaystyle(\utau,\nabla\psi_R)_{T_R}+(\dv\utau,\psi_R)_{T_R}=0$, and $\displaystyle(\utau,\nabla\psi_N)_{T_R}+(\dv\utau,\psi_N)_{T_R}=0.$ Then $\utau$ satisfies \eqref{eq:basisBM}. As $\utau$ vanishes on other cells, we can obtain $\displaystyle\sum_{T\in\mathcal{T}_h}(\utau,\nabla\psi)_T+(\dv\utau,\psi)_T=0$ for all $\psi\in \mathbb{V}^1_h$, thus $\utau\in\mathbb{RT}^{\rm nc}_{h0}$. 

\begin{figure}[htbp]
	\centering
	\includegraphics[width=\textwidth]{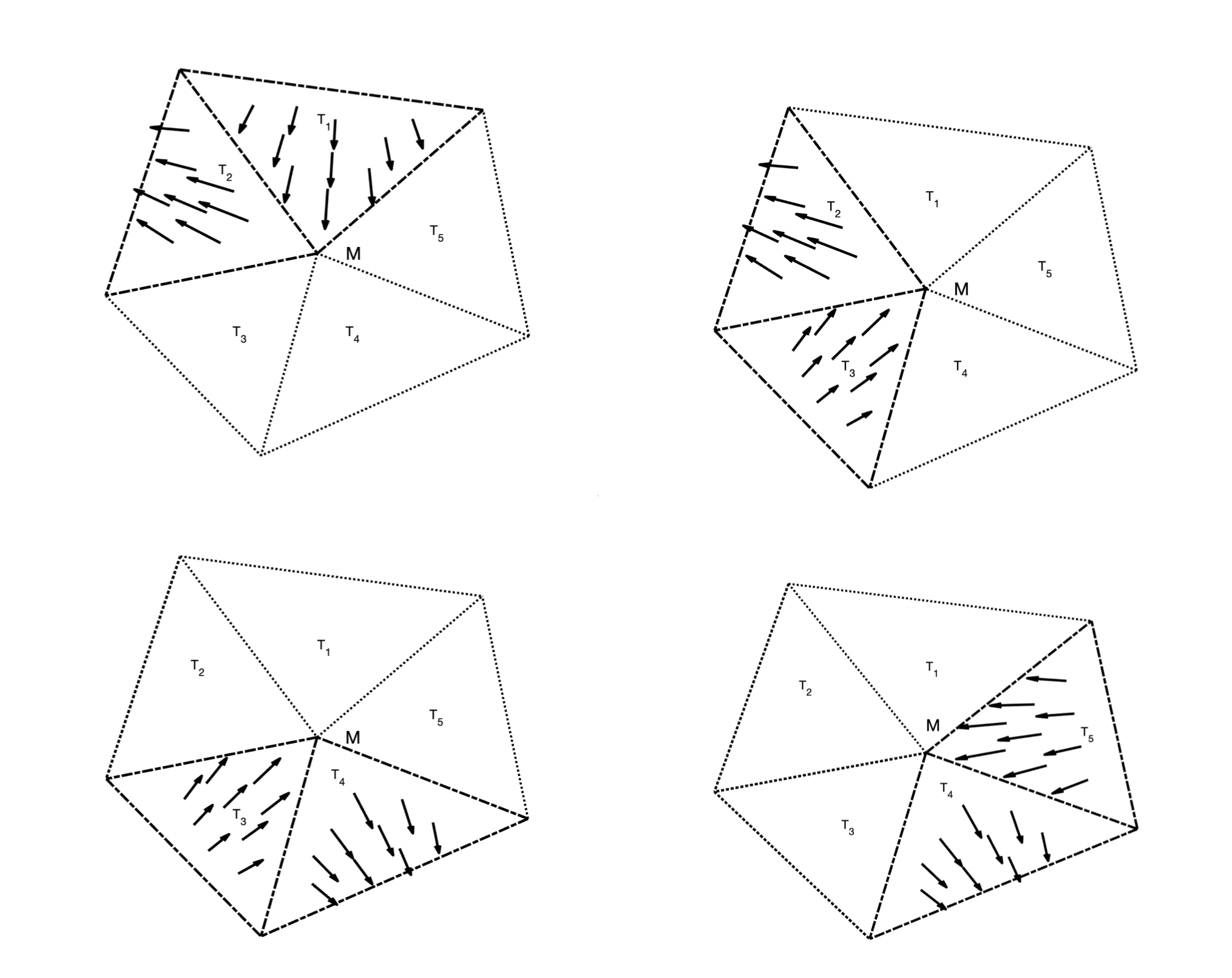}
	\caption{Profiles of linearly independent basis functions of $\mathcal{B}_M$, $M\in\mathcal{X}_h^i$.}
	\vspace{-0.2cm}
	\label{fig:basisM}
\end{figure}

\begin{figure}[htbp]
\includegraphics[width=0.48\textwidth]{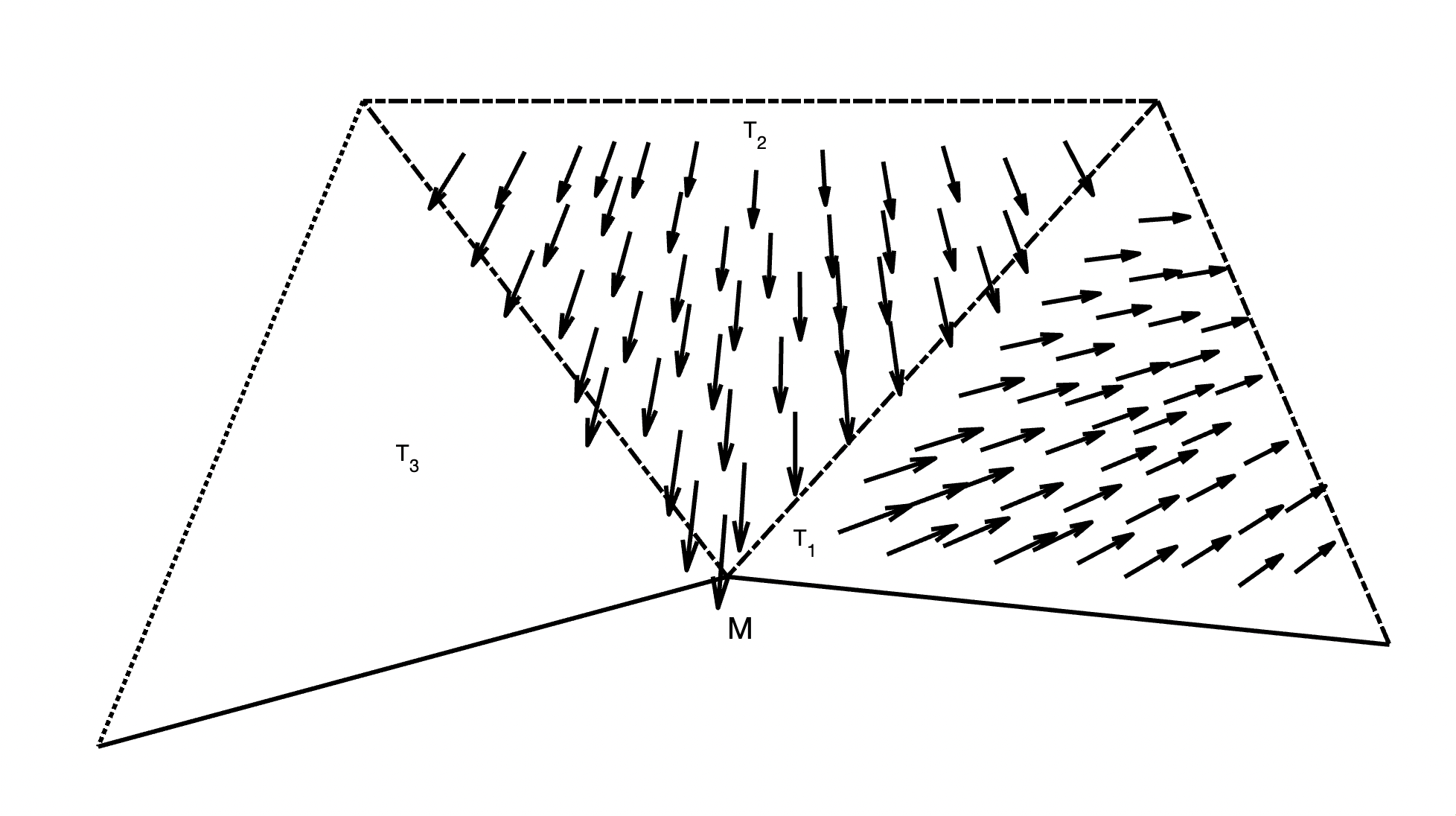}
\includegraphics[width=0.48\textwidth]{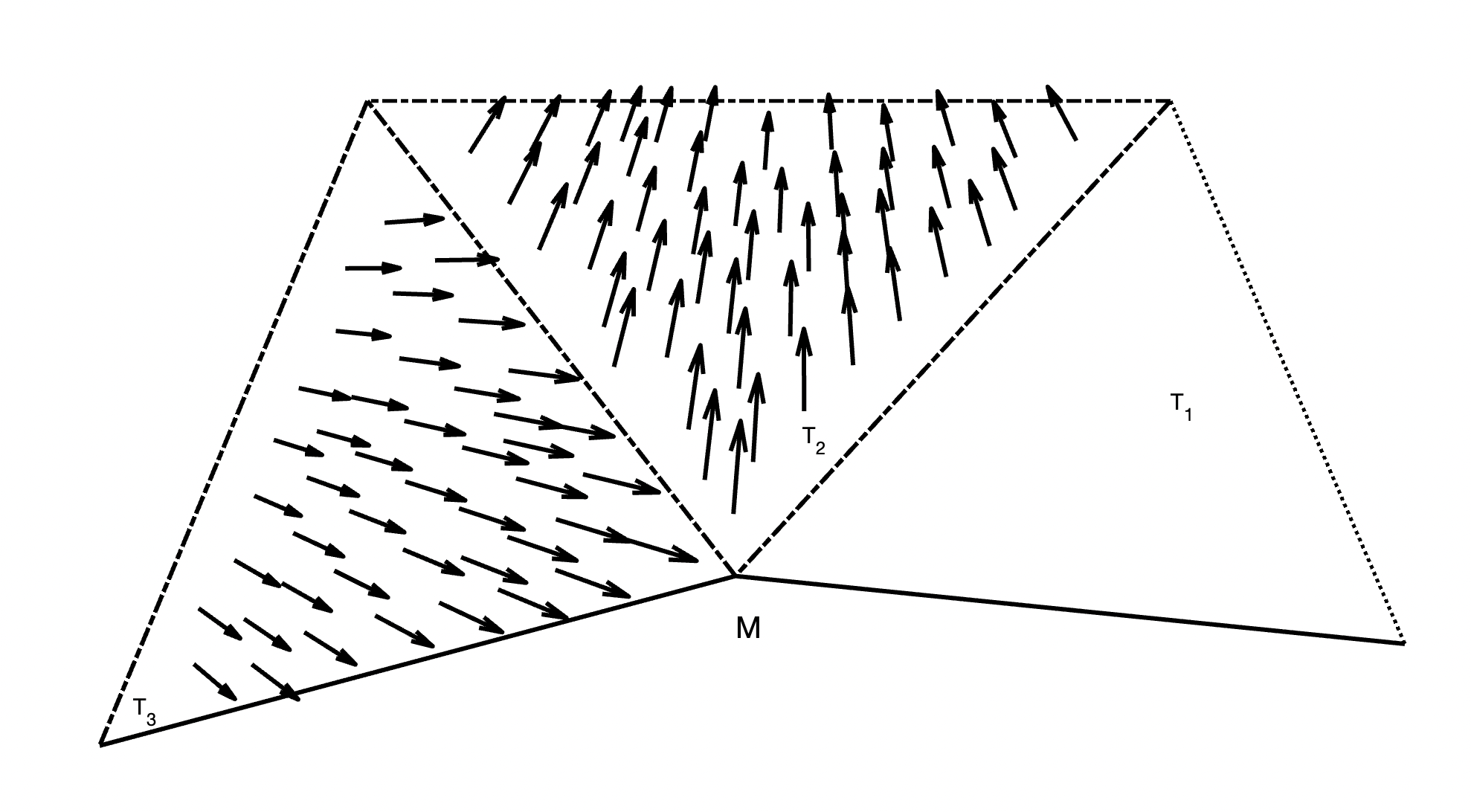}
\caption{Illustration of global basis functions of $\mathbb{RT}^{\rm nc}_{h0}$ based on a boundary vertex $M$}\label{fig:basisMbdy}
\end{figure}

According to the profile of Figure \ref{fig:globalbasis}, a set of linearly independent basis functions of $\mathcal{B}_M$ can be given in Figure \ref{fig:basisM}, where $M$ is an interior vertex, and in Figure \ref{fig:basisMbdy}, where $M$ is a boundary vertex. This completes the proof. 
\end{proof}
~
\begin{lemma}\label{lem:strccm}
Given a vertex $A$ that is shared by $m$ triangles, $\dim(\mathcal{C}_A)=m$. A minimal support for a function in $\mathcal{C}_A$ is one cell.
\end{lemma}
The proof of Lemma \ref{lem:strccm} is straightforward. We refer to Figure \ref{fig:bdybasissep} for an illustration. 

\begin{figure}[htbp]
\includegraphics[width=0.32\textwidth]{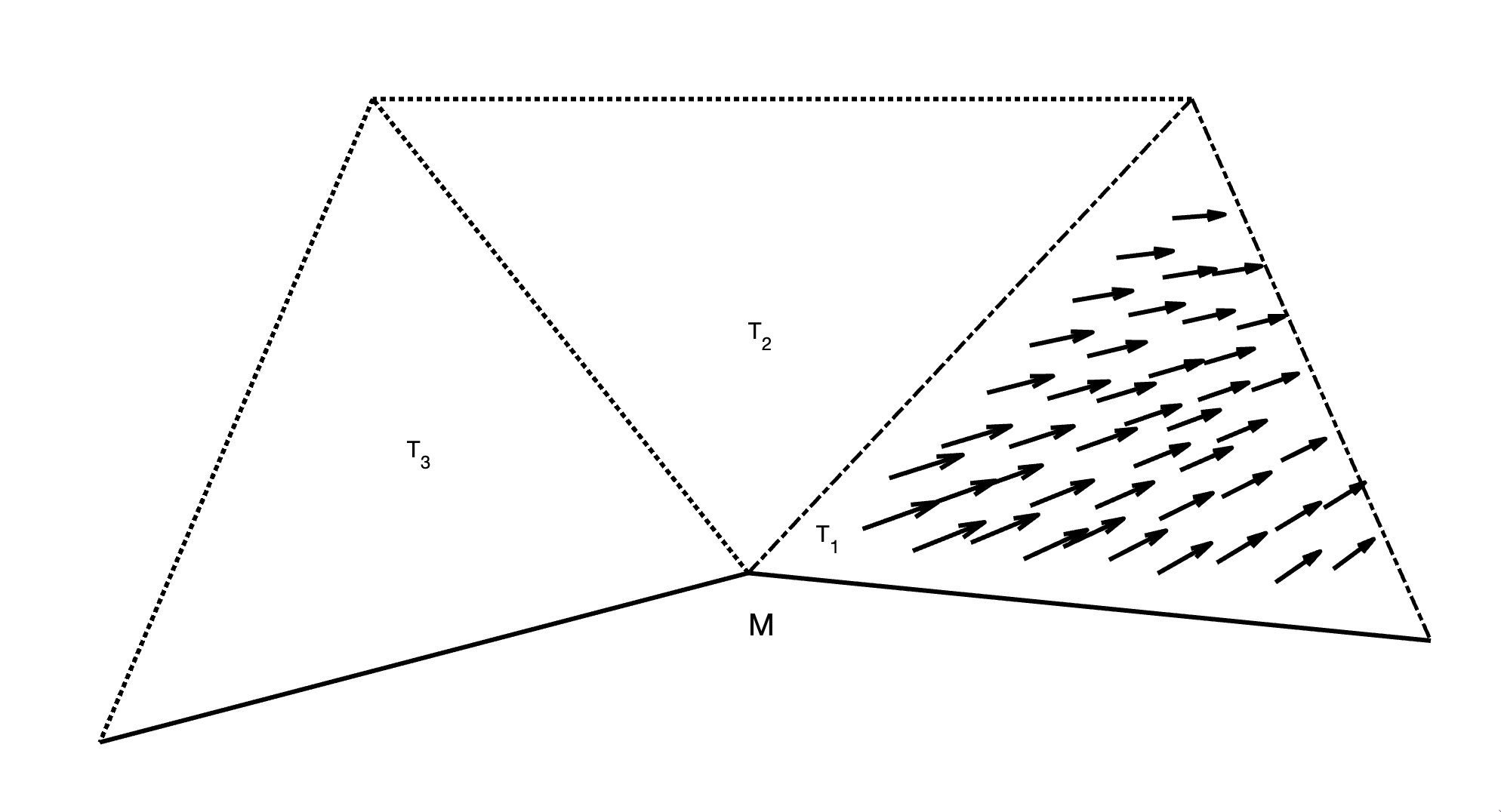}
\includegraphics[width=0.32\textwidth]{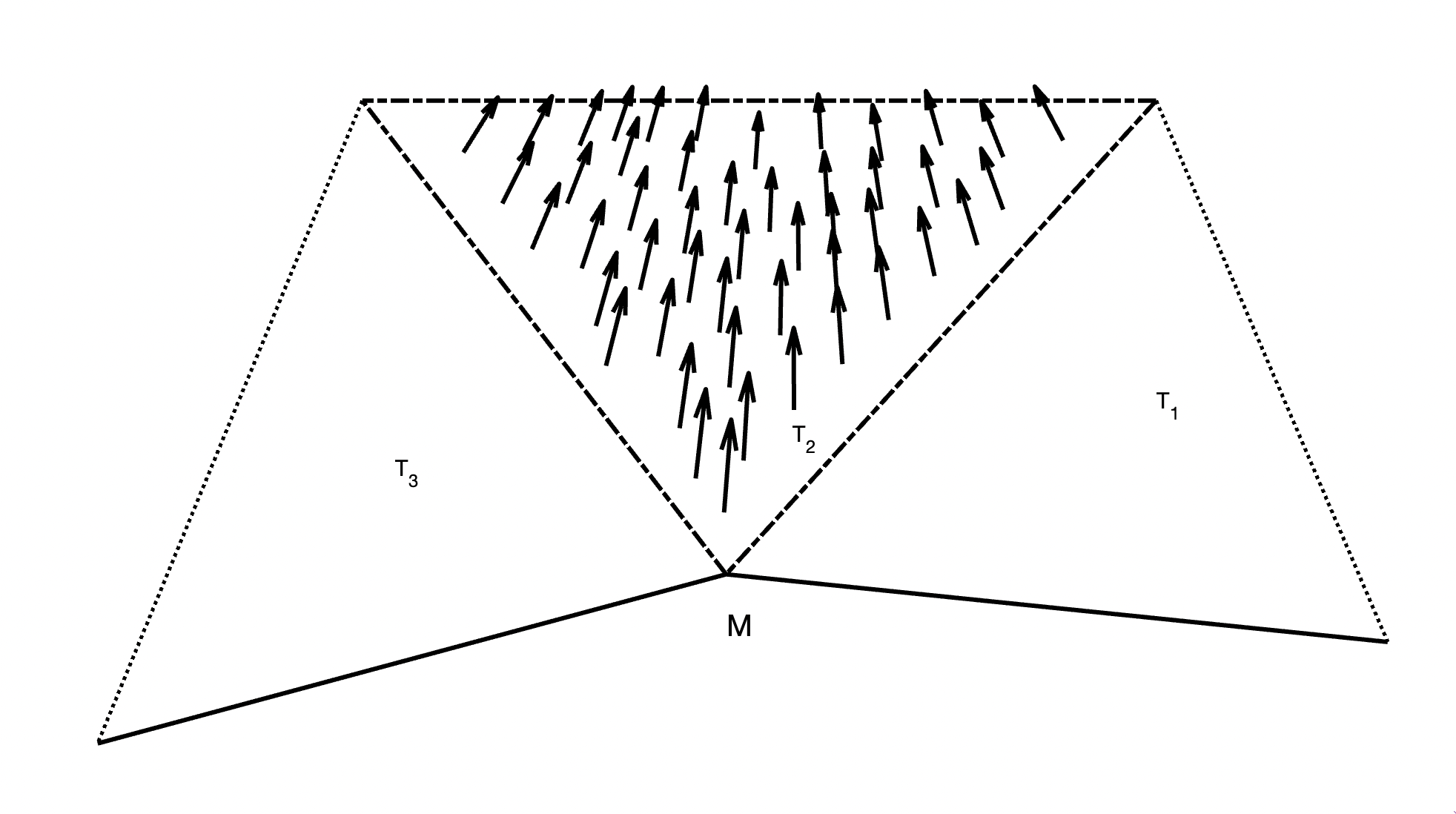}
\includegraphics[width=0.32\textwidth]{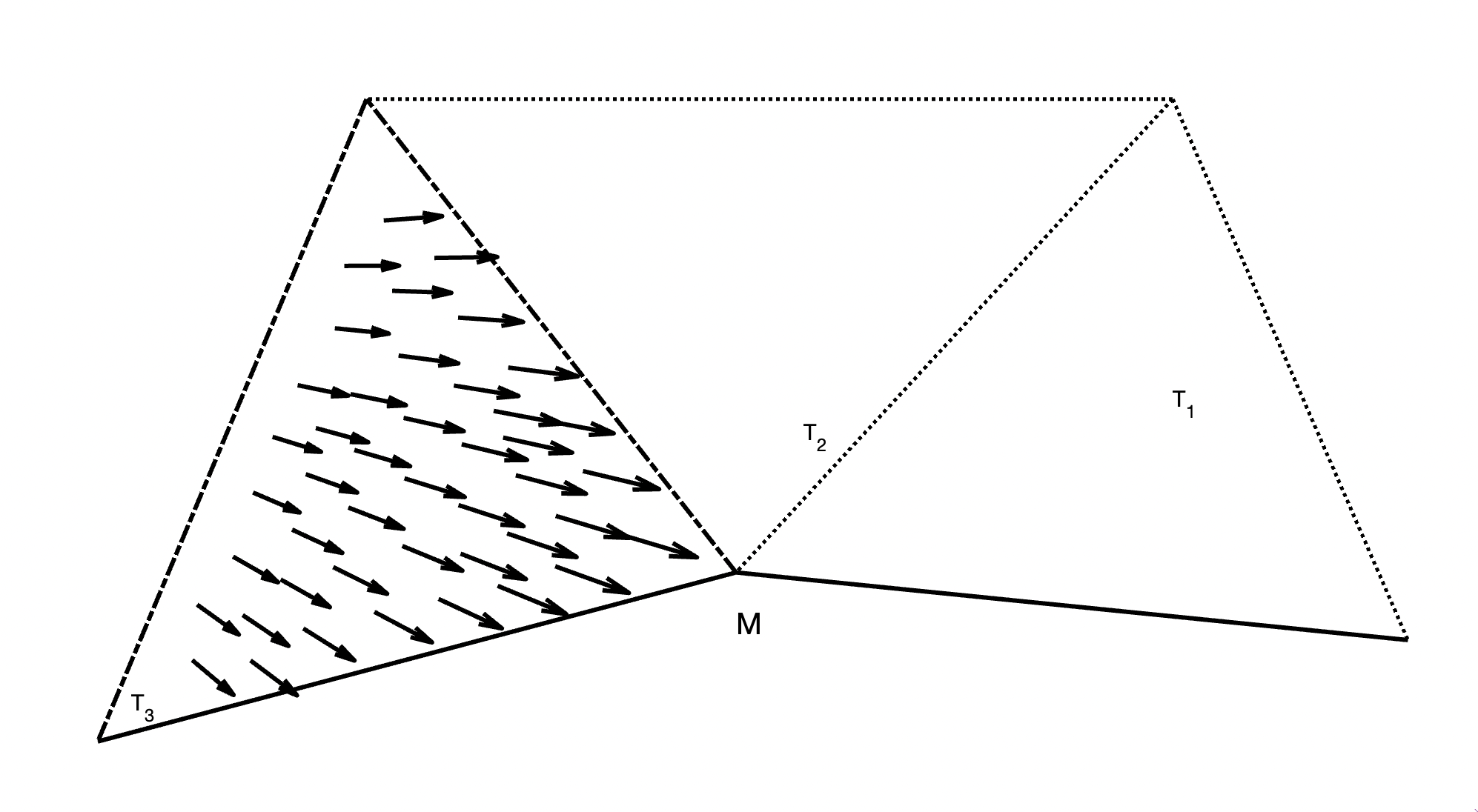}
\caption{The local basis functions associated with a boundary vertex $M$ can work as global basis functions of $\mathbb{RT}^{\rm nc}_h$.}\label{fig:bdybasissep}
\end{figure}

\begin{remark}
For $\mathbb{RT}^{\rm nc}_{h0}$, the total amount of the locally supported basis functions is
$$
\sum_{M\in\mathcal{X}_h}\left[\#\left\{T\in\mathcal{T}_h:\partial T\ni M\right\}-1\right]=3\#\left[T\in\mathcal{T}_h\right]-\#\left[M\in\mathcal{X}_h\right]=\dim(\mathbb{RT}^{\rm nc}_{h0}).
$$
For $\mathbb{RT}^{\rm nc}_h$, the total amount of the locally supported basis functions is
\begin{multline*}
\sum_{M\in\mathcal{X}_h^b}\left[\#\left\{T\in\mathcal{T}_h:\partial T\ni M\right\}\right]+\sum_{M\in\mathcal{X}_h^i}\left[\#\left\{T\in\mathcal{T}_h:\partial T\ni M\right\}-1\right]
\\
=3\#\left[T\in\mathcal{T}_h\right]-\#\left[M\in\mathcal{X}_h^i\right]=\dim(\mathbb{RT}^{\rm nc}_h).
\end{multline*}

In any case, $T$ is covered by the supports of no more than $\widetilde m+6$ basis functions, where $\widetilde m$ is the number of cells that has at least one vertex in common with $T$. The generation of a local stiffness matrix is a local operation, and the assembling of global stiffness matrices can be done by following the standard routine for finite elements of Ciarlet-type. 
\end{remark}

Based on the specific profiles of the basis functions, we conclude this subsection by rephrasing Lemma \ref{lem:basisrtabc} as the theorem below. 
\begin{theorem}\label{thm:localbasisrtabc}
The space $\mathbb{RT}^{\rm nc}_{h0}$ admits a set of linear independent basis functions, which are belonging to $\bigoplus_{M\in\mathcal{X}_h}\mathcal{B}_M$ and each supported on two adjacent triangles. 

The space $\mathbb{RT}^{\rm nc}_h$ admits a set of linear independent basis functions; they consist of two types of functions, Type I and Type II. The functions of Type I are belonging to $\bigoplus_{M\in\mathcal{X}_h^i}\mathcal{B}_M$ and each supported on two adjacent triangles, and the functions of Type II are belonging to $\bigoplus_{M\in\mathcal{X}_h^b}\mathcal{C}_M$ and each supported on one triangle.
\end{theorem}


%
%
\subsection{Approximation and stability}
\label{subsec:intrt}


%
\subsubsection{Locally-defined projective interpolator for $\Hdiv$}

Given a triangle $T$, define the cell-wise interpolator 
\begin{equation}\label{eq:irtlocal}
\mathbb{I}^{\rm RT}_T: \HdivT\to\mathbb{RT}(T)
\end{equation}
such that
\begin{equation}\label{eq:adpro}
(\mathbb{I}^{\rm RT}_T\utau,\nabla v)_T+(\dv\mathbb{I}^{\rm RT}_T\utau,v)_T=(\utau,\nabla v)_T+(\dv\utau,v)_T,\ \forall\,v\in P_1(T).  
\end{equation}
By \eqref{eq:dualityblambda}, $\displaystyle \mathbb{I}^{\rm RT}_T\utau=\sum_{i=1}^3 \left[(\utau,\nabla \lambda_i)_T+(\dv\utau,\lambda_i)_T\right]\ub{}_T^{a_i}$, and $\mathbb{I}^{\rm RT}_T \usigma=\usigma$ for $\usigma\in \mathbb{RT}(T)$. 

\begin{remark}
The Crouzeix-Raviart element interpolator $\mathbb{I}^{\rm CR}_T:H^1(T)\to P_1(T)$, defined such that $\int_e\mathbb{I}^{\rm CR}_Tv=\int_e v$, satisfies the condition
$
(\mathbb{I}^{\rm CR}_Tv,\dv\utau)_T+(\nabla \mathbb{I}^{\rm CR}_Tv,\utau)_T=(v,\dv\utau)_T+(\nabla v,\utau)_T,\ \forall\,\utau\in \mathbb{RT}(T).  
$
\end{remark}

On the triangulation $\mathcal{T}_h$, define the global interpolator by
\begin{equation}\label{eq:globalintp}
\mathbb{I}^{\rm RT}_h:\bigoplus_{T\in\mathcal{T}_h}E_T^\Omega H(\dv,T)\to \mathbb{RT}(\mathcal{T}_h),\ \ \ (\mathbb{I}^{\rm RT}_h\utau{}_h)_T=\mathbb{I}^{\rm RT}_T(\utau{}_h|_T),\ \forall\,T\in\mathcal{T}_h.
\end{equation}
\begin{lemma}\label{lem:intdivinrt}
$\displaystyle\R(\mathbb{I}^{\rm RT}_h, H(\dv,\Omega))\subset \mathbb{RT}^{\rm nc}_h$ and $\displaystyle \R(\mathbb{I}^{\rm RT}_h, H_0(\dv,\Omega))\subset \mathbb{RT}^{\rm nc}_{h0}$.
\end{lemma}
\begin{proof} 
~~Given $\usigma\in H(\dv,\Omega)$, $(\usigma,\nabla v_h)+(\dv\usigma,v_h)=0$ for any $v_h\in \mathbb{V}^1_{h0}$. Thus for any $v_h\in \mathbb{V}^1_{h0}$,
$\displaystyle \sum_{T\in\mathcal{T}_h}(\nabla v_h,\mathbb{I}^{\rm RT}_h\usigma)_T+(v_h,\dv\mathbb{I}^{\rm RT}_h\usigma)_T=\sum_{T\in\mathcal{T}_h}(\nabla v_h,\usigma)_T+(v_h,\dv\usigma)_T=0$. Namely $\mathbb{I}^{\rm RT}_h\usigma\in\mathbb{RT}^{\rm nc}_h$, and thus $\R(\mathbb{I}^{\rm RT}_h, H(\dv,\Omega))\subset \mathbb{RT}^{\rm nc}_h$. Similarly $\displaystyle \R(\mathbb{I}^{\rm RT}_h, H_0(\dv,\Omega))\subset \mathbb{RT}^{\rm nc}_{h0}$. This completes the proof. 
\end{proof}
\begin{remark}
Different from most existing interpolators, $\mathbb{I}^{\rm RT}_h\usigma$ is not defined in the form of $\sum l_i(\usigma)\utau{}_i$, where $\utau{}_i$ is each a global basis function of $\mathbb{RT}^{\rm nc}_h$, and $l_i$ is each a functional on $\usigma$. Indeed, according to theory of \cite{Zeng.H;Zhang.C;Zhang.S2023existence}, as the global basis functions of $\mathbb{RT}^{\rm nc}_h$ may not be locally linearly independent, interpolator defined as $\sum l_i(\usigma)\utau{}_i$ with $l_i$ depends on the local information of $\usigma$ cannot be projective. 
\end{remark}

%
\subsubsection{Approximation and stability}

\begin{lemma}\label{lem:intdiv}
With a constant $C$ depending on the shape regularity of $T$,
\begin{enumerate}[(a)]
\item stabilities:
$$\|\dv \mathbb{I}^{\rm RT}_T\usigma\|_{0,T}\leqslant \|\dv\usigma\|_{0,T},\ \ \mbox{and}\quad \displaystyle \|\mathbb{I}^{\rm RT}_T\usigma\|_{\dv,T}\leqslant C\|\usigma\|_{\dv,T};$$ 
\item optimal approximation:
$$
\displaystyle\|\dv(\usigma-\mathbb{I}^{\rm RT}_T \usigma)\|_{0,T}=\inf_{\utau\in \mathbb{RT}(T)}\|\dv(\usigma-\utau)\|_{0,T},\ \ \
\mbox{and}\quad\displaystyle\|\usigma-\mathbb{I}^{\rm RT}_T \usigma\|_{\dv,T}\leqslant C \inf_{\utau\in \mathbb{RT}(T)}\|\usigma-\utau\|_{\dv,T}. 
$$
\end{enumerate}
\end{lemma}
\begin{proof}
~~Evidently, $\dv\mathbb{I}^{\rm RT}_T\usigma$ is the $L^2(T)$ projection of $\dv\usigma$ onto piecewise constant space; therefore, $\|\dv\mathbb{I}^{\rm RT}_T\usigma\|_{0,T}\leqslant \|\dv\usigma\|_{0,\Omega}$. Now we use $\mathbf{P}^0_T$ for the $L^2(T)$ projection to constant, and $\undertilde{\mathbf{P}}{}^0_T:=(\mathbf{P}^0_T)^2$. Then for any $\utau\in \mathbb{RT}(T)$, we have by Poincar\'e inequality, $\|\utau-\undertilde{\mathbf{P}}{}^0_T\utau\|_{0,T}\leqslant Ch_T\|\nabla \utau\|_{0,T}=Ch_T/\sqrt{2}\|\dv \utau\|_{0,T}.$ Meanwhile, for any $v\in P_1(T)$, $\|v-\mathbf{P}^0_T v\|_{0,T}\leqslant Ch_T\|\nabla v\|_{0,T}$. For any $v\in P_1(T)$, $\left(\mathbb{I}^{\rm RT}_T\usigma,\nabla v\right)_T+(\dv\mathbb{I}^{\rm RT}_T\usigma, v)_T=(\usigma,\nabla v)_T+(\dv\usigma,v)_T$, and $(\dv\mathbb{I}^{\rm RT}_T\usigma, \mathbf{P}^0_Tv)_T=(\dv\usigma,\mathbf{P}^0_Tv)_T$. Therefore, 
$\left(\undertilde{\mathbf{P}}{}^0_T\mathbb{I}^{\rm RT}_T\usigma,\nabla v\right)_T=(\usigma,\nabla (v-\mathbf{P}^0_Tv))_T+(\dv\usigma,v-\mathbf{P}^0_Tv)_T.$ It follows that $\|\undertilde{\mathbf{P}}{}^0_T\mathbb{I}^{\rm RT}_T\usigma\|_{0,T}\leqslant C(\|\usigma\|_{0,T}+h_T\|\dv\usigma\|_{0,T}).$ Further $\|\mathbb{I}^{\rm RT}_T\usigma\|_{0,T}\leqslant C(\|\usigma\|_{0,T}+h_T\|\dv\usigma\|_{0,T}),$ and $\|\mathbb{I}^{\rm RT}_T\usigma\|_{\dv,T}\leqslant C\|\usigma\|_{\dv,T}.$

The optimal approximation follows then from the stability and by the standard procedure. 
\end{proof}

Moreover, as the global interpolator is defined completely piecewise, global stabilities hold and
\begin{equation}\label{eq:irtoptgl}
\|\usigma-\mathbb{I}^{\rm RT}_h\usigma\|_{\dv_h}\leqslant C\inf_{\utau{}_h\in \mathbb{RT}(\mathcal{T}_h)}\|\usigma-\utau{}_h\|_{\dv_h},
\end{equation}
where $C$ depends on the regularity of the triangulation only.

Further, the Poincar\'e inequalities hold for $\mathbb{RT}^{\rm nc}_h$ and $\mathbb{RT}^{\rm nc}_{h0}$.

\begin{lemma}\label{lem:stabrtabc}
Given $\utau{}_h\in \mathbb{RT}^{\rm nc}_h$, there is $\usigma{}_h\in \mathbb{RT}^{\rm nc}_h$, such that 
$\dv_h\usigma{}_h=\dv_h\utau{}_h$, and $\|\usigma{}_h\|_{0,\Omega}\leqslant C\|\dv_h\utau{}_h\|_{0,\Omega}$.
\end{lemma}
\begin{proof}
~~Note that $\dv_h\utau{}_h$ is piecewise constant, and there exists a $\utau\in H(\dv,\Omega)$, such that $\dv\utau=\dv_h\utau{}_h$, and $\|\utau\|_{\dv,\Omega}\leqslant C\|\dv\utau\|_{0,\Omega}$. Set $\usigma{}_h=\mathbb{I}^{\rm RT}_h\utau$, then $\dv_h\usigma{}_h=\dv\utau$, and $\|\usigma{}_h\|_{\dv_h}\leqslant C\|\utau\|_{\dv}\leqslant C\|\dv_h\utau{}_h\|_{0,\Omega}$. This completes the proof. 
\end{proof}

\begin{remark}
Evidently, $\mathbb{RT}^{\rm nc}_h\supset \uV^{\rm RT}_h$, and thus the approximation and stability properties of $\mathbb{RT}^{\rm nc}_h$ follow. Though, we present direct proofs of them by the aid of the interpolator. In some sense, both the two properties established here are optimal.  
\end{remark}



%
%
\subsection{Discretization of the variational problems}

\subsubsection{Discretization of the $H(\dv)$ elliptic problem} 

We consider the problem: given $\uf\in\uL^2(\Omega)$, find $\usigma\in H(\dv,\Omega)$, such that 
\begin{equation}\label{eq:evpd}
(\dv\usigma,\dv\utau)+(\usigma,\utau)=(\uf,\utau),\ \ \forall\,\utau\in H(\dv,\Omega). 
\end{equation}
It follows that $\dv\usigma\in H^1_0(\Omega)$, and $\uf=-\nabla\dv\usigma+\usigma$.

We here consider the discretization of \eqref{eq:evpd}: to find $\usigma{}_h\in \mathbb{RT}^{\rm nc}_h$, such that 
\begin{equation}\label{eq:dispro}
(\dv_h\usigma{}_h,\dv_h\utau{}_h)+(\usigma{}_h,\utau{}_h)=(\uf,\utau{}_h),\ \ \forall\,\utau{}_h\in \mathbb{RT}^{\rm nc}_h. 
\end{equation}

Immediately \eqref{eq:evpd} and \eqref{eq:dispro} are well-posed. Denote $\|\utau{}_h\|_{\dv_h}:=(\|\utau{}_h\|_0^2+\|\dv_h\utau{}_h\|_0^2)^{1/2}$. 

\begin{theorem}\label{thm:basicest}
Let $\usigma$ and $\usigma{}_h$ be the solutions of \eqref{eq:evpd} and \eqref{eq:dispro}, respectively. Then
\begin{equation}\label{eq:basicest}
\|\usigma-\usigma_h\|_{\dv_h}\leqslant 2\inf_{\utau{}_h\in \mathbb{RT}^{\rm nc}_h}\|\usigma-\utau{}_h\|_{\dv_h}+\inf_{v_h\in\mathbb{V}^1_{h0}}\|\dv\usigma-v_h\|_{1,\Omega}. 
\end{equation}
\end{theorem}

\begin{proof}
~~By Strang's lemma (cf. \cite{Ciarlet.P1978book}),
$$
\|\usigma-\usigma_h\|_{\dv_h}\leqslant 2\inf_{\utau{}_h\in \mathbb{RT}^{\rm nc}_h}\|\usigma-\utau{}_h\|_{\dv_h}+\sup_{\utau{}_h\in \mathbb{RT}^{\rm nc}_h}\frac{(\dv\usigma,\dv_h\utau{}_h)+(\nabla\dv\usigma,\utau{}_h)}{\|\utau{}_h\|_{\dv_h}}.
$$
For any $v_h\in \mathbb{V}^1_{h0}$,
$$(\dv\usigma,\dv_h\utau{}_h)+(\nabla\dv\usigma,\utau{}_h)
=(\dv\usigma-v_h,\dv_h\utau{}_h)+(\nabla(\dv\usigma-v_h),\utau{}_h)\leqslant \|\dv\usigma-v_h\|_{1,\Omega}\|\utau{}_h\|_{\dv_h}.$$
Then \eqref{eq:basicest} follows.
\end{proof}

By the abstract estimation, the precise convergence order can be figured out with respect to the assumption on the regularity of the solution.

\subsubsection{Discretization of the Darcy problem} 
We consider the problem: given $f\in L^2(\Omega)$, find $(u,\usigma)\in L^2(\Omega)\times H(\dv,\Omega)$, such that 
\begin{equation}\label{eq:Poissondual}
\left\{
\begin{array}{llll}
(\usigma,\utau)&+(u,\dv\utau)&=0&\forall\,\utau\in \HdivOmega,
\\
(\dv\usigma,v)&&=(f,v)&\forall\,v\in L^2(\Omega).
\end{array}
\right.
\end{equation}

The discretization is to find $(u_h,\usigma{}_h)\in \mathcal{P}_0(\mathcal{T}_h)\times \mathbb{RT}^{\rm nc}_h$, such that 
\begin{equation}\label{eq:Poissondualdis}
\left\{
\begin{array}{llll}
(\usigma{}_h,\utau{}_h)&+(u_h,\dv_h\utau{}_h)&=0&\forall\,\utau{}_h\in \mathbb{RT}^{\rm nc}_h,
\\
(\dv_h\usigma{}_h,v_h)&&=(f,v_h)&\forall\,v_h\in \mathcal{P}_0(\mathcal{T}_h).
\end{array}
\right.
\end{equation}

Here $\mathcal{P}_0(\mathcal{T}_h)$ is the space of piecewise constant functions. Evidently, \eqref{eq:Poissondualdis} is well-posed. 

\begin{theorem}
Let $(u,\usigma)$ and $(u_h,\usigma{}_h)$ be the solutions of \eqref{eq:Poissondual} and \eqref{eq:Poissondualdis}, respectively. Then
$$
\|u-u_h\|_{0,\Omega}+\|\usigma-\usigma{}_h\|_{\dv_h}
\leqslant C\left[\inf_{\substack{v_h\in \mathcal{P}_0(\mathcal{T}_h) \\ \utau{}_h\in \mathbb{RT}^{\rm nc}_h}}(\|u-v_h\|_{0,\Omega}+\|\usigma-\utau{}_h\|_{\dv_h})+\inf_{s_h\in \mathbb{V}^1_{h0}}\|u-s_h\|_{1,\Omega}\right].
$$
\end{theorem}

\begin{proof}
~~By the Strang lemma for saddle point problem (cf., e.g., \cite[Proposition5.5.6]{Boffi.D;Brezzi.F;Fortin.M2013}), 
$$
\|u-u_h\|_{0,\Omega}+\|\usigma-\usigma{}_h\|_{\dv_h}
\leqslant C\left[\inf_{\substack{v_h\in \mathcal{P}_0(\mathcal{T}_h) \\ \utau{}_h\in \mathbb{RT}^{\rm nc}_h}}(\|u-v_h\|_{0,\Omega}+\|\usigma-\utau{}_h\|_{\dv_h})+\sup_{\utau{}_h\in \mathbb{RT}^{\rm nc}_h}\frac{(\usigma,\utau{}_h)+(u,\dv_h\utau{}_h)}{\|\utau{}_h\|_{\dv_h}}\right].
$$
Note that $\usigma=\nabla u$, and we have, for any $s_h\in \mathbb{V}^1_{h0}$, 
$$
(\usigma,\utau{}_h)+(u,\dv_h\utau{}_h)=(\nabla u-\nabla s_h,\utau{}_h)+(u-s_h,\dv_h\utau{}_h)\leqslant \|u-s_h\|_{1,\Omega}\|\utau{}_h\|_{\dv_h}.
$$
It follows then
$$
\|u-u_h\|_{0,\Omega}+\|\usigma-\usigma{}_h\|_{\dv_h}
\leqslant C\left[\inf_{\substack{v_h\in \mathcal{P}_0(\mathcal{T}_h) \\ \utau{}_h\in \mathbb{RT}^{\rm nc}_h}}(\|u-v_h\|_{0,\Omega}+\|\usigma-\utau{}_h\|_{\dv_h})+\inf_{s_h\in \mathbb{V}^1_{h0}}\|u-s_h\|_{1,\Omega}\right].
$$
This completes the proof. 
\end{proof}


%
%
\subsection{Numerical experiments}
We show the implementability of $\mathbb{RT}^{\rm nc}_h$ and its difference from the classical Raviart-Thomas element by two series of experiments. 

\subsubsection{Implementability of the space $\mathbb{RT}^{\rm nc}_h$}

Firstly, we use $\mathbb{RT}^{\rm nc}_h$ to solve numerically the boundary value problems \eqref{eq:evpd} and \eqref{eq:Poissondual}. We use the unit square $(0,1)^2$ as the computation domain, and we choose properly the source terms, such that 
\begin{itemize}
\item for \eqref{eq:evpd}, the exact solution is 
$$
\usigma=(-2\cos(\pi x)\sin(\pi y),\sin(\pi x)\cos(\pi y))^\top;
$$
\item for \eqref{eq:Poissondual}, the exact solution is 
$$
u=\sin(\pi x)\sin(\pi y),\ \  \mbox{and}\ \ \usigma=\nabla u.
$$
\end{itemize}
We construct two series of triangulations, being crisscross (cf. Figure \ref{fig:reguandirregutri}, left) and irregular (cf. Figure \ref{fig:reguandirregutri}, right), respectively. The computational results are recorded in Figures \ref{fig:convprimal} and \ref{fig:convmix}. 

\begin{figure}[htbp]
\includegraphics[width=0.45\textwidth]{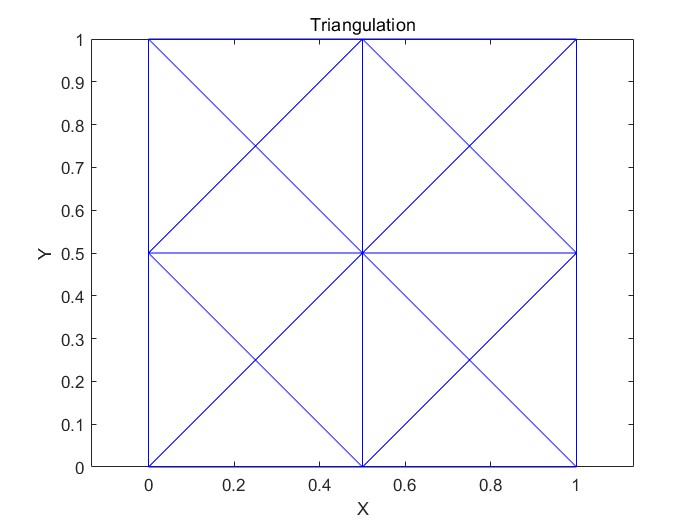}
\includegraphics[width=0.45\textwidth]{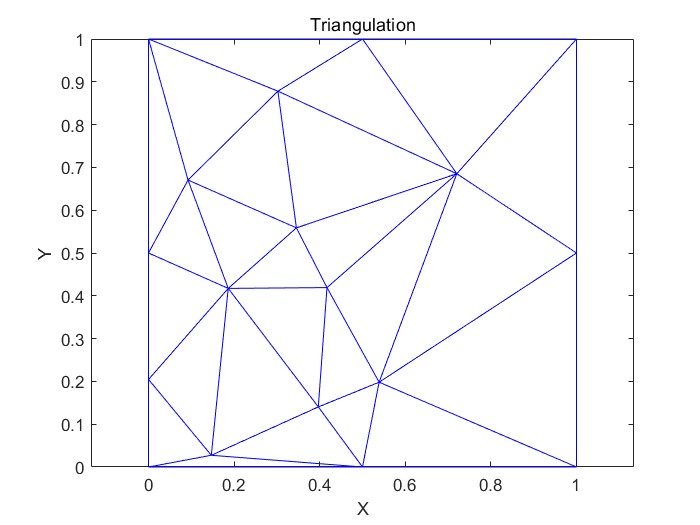}
\caption{The initial triangulation of two series of triangulations. Left: crisscross; right: irregular.}\label{fig:reguandirregutri}
\end{figure}

\begin{figure}[htbp]
\includegraphics[width=0.45\textwidth]{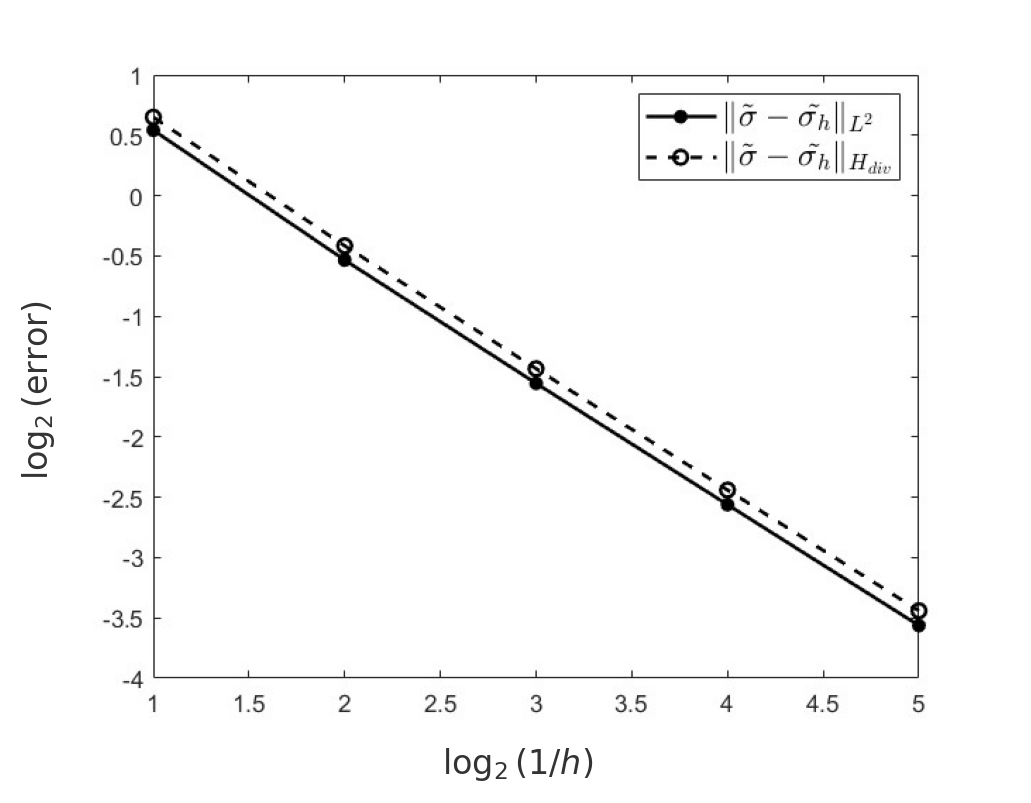}
\includegraphics[width=0.45\textwidth]{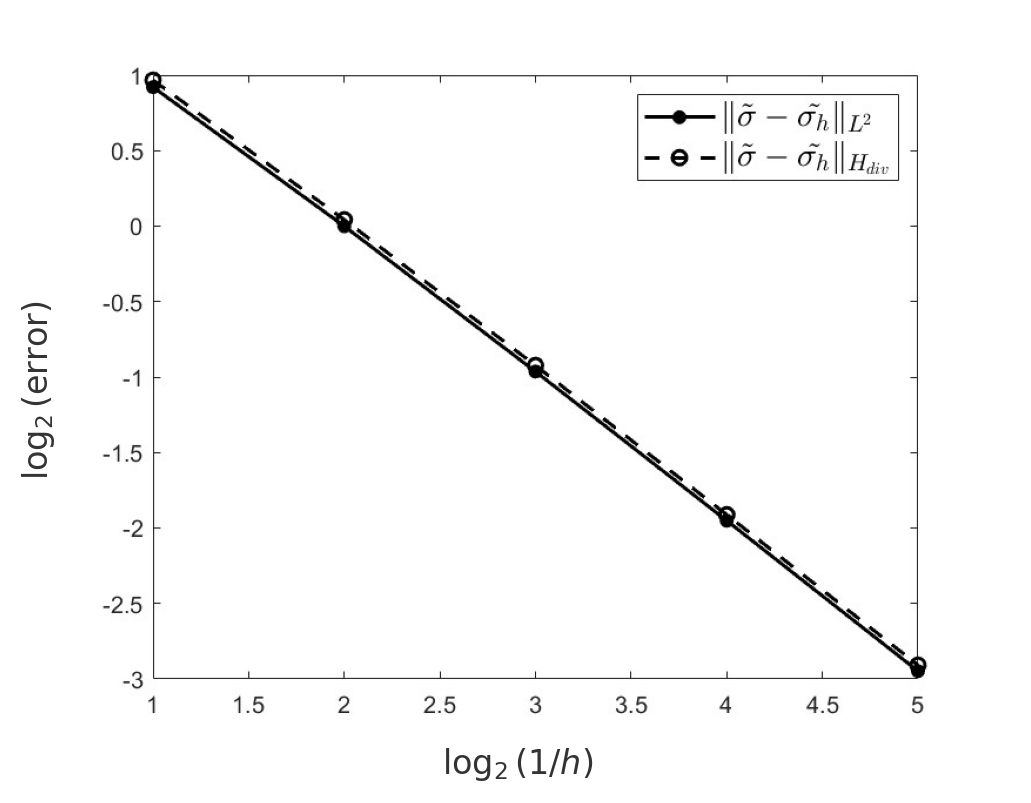}
\caption{Convergence process for \eqref{eq:evpd}. Left: on crisscross triangulations; right: on irregular triangulations.}\label{fig:convprimal}
\end{figure}

\begin{figure}[htbp]
\includegraphics[width=0.45\textwidth]{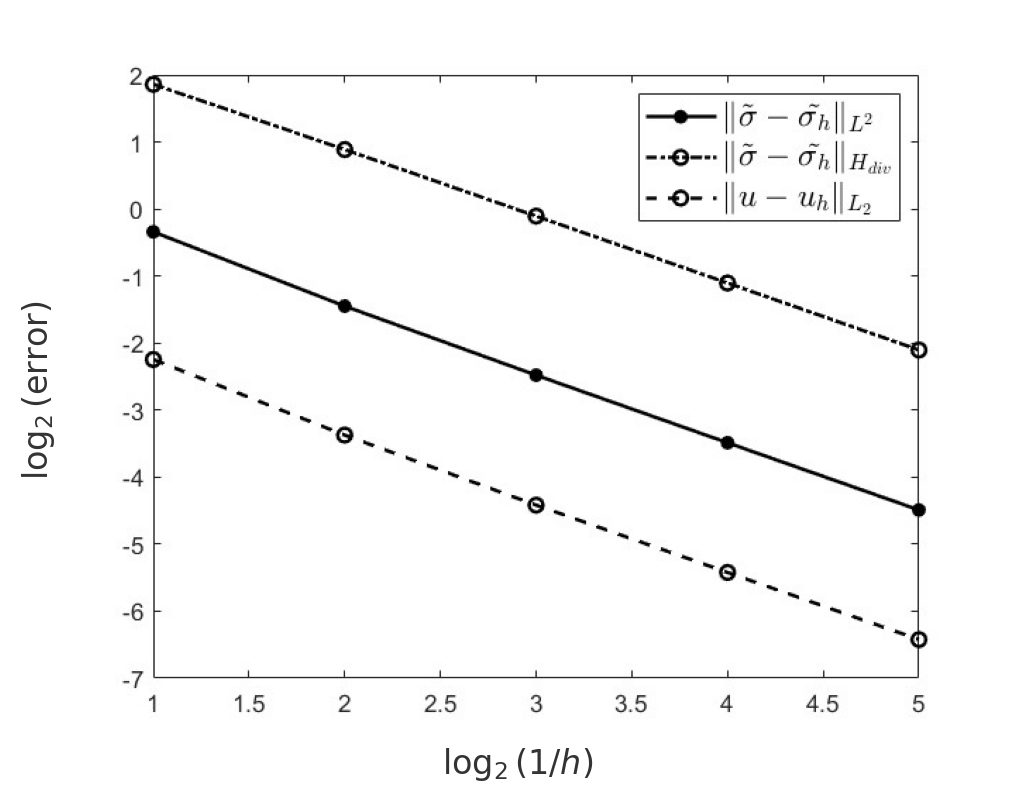}
\includegraphics[width=0.45\textwidth]{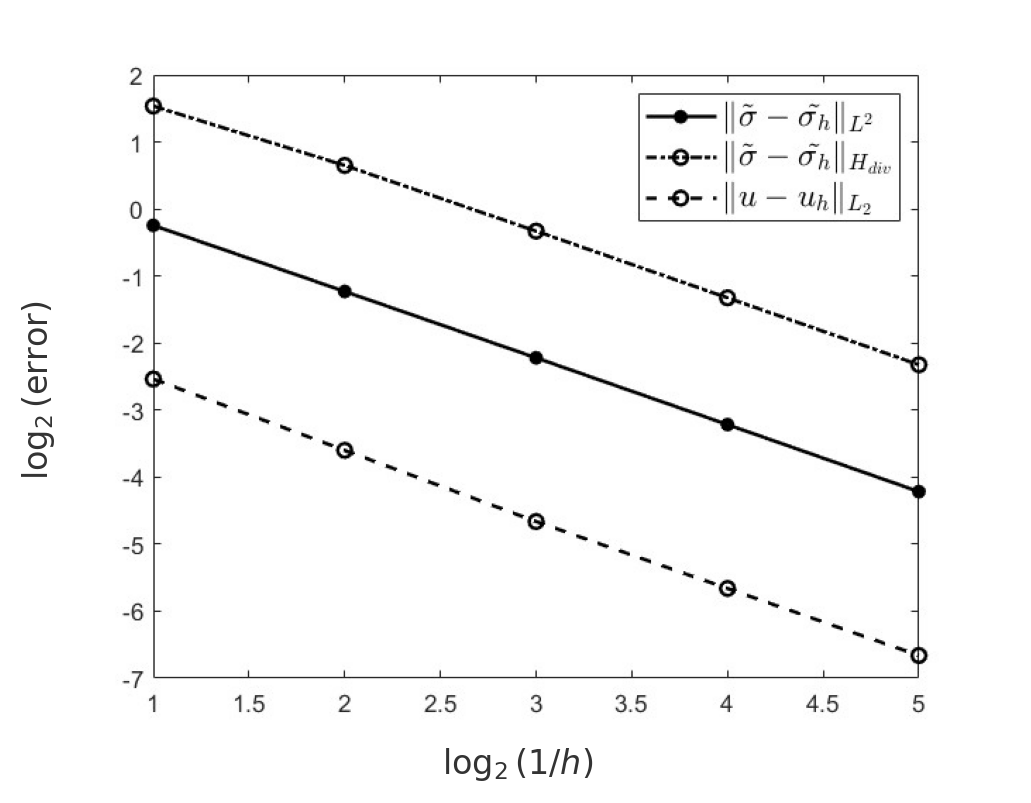}
\caption{Convergence process for \eqref{eq:Poissondual}. Left: on crisscross triangulations; right: on irregular triangulations.}\label{fig:convmix}
\end{figure}

On the two series of triangulations, we also use $\mathbb{RT}^{\rm nc}_h\times \mathcal{P}_0(\mathcal{T}_h)$ to solve the eigenvalue problem of \eqref{eq:Poissondual}, which is to find $(\lambda, u,\usigma)\in\mathbb{R}\times L^2(\Omega)\times H(\dv,\Omega)$, such that 
\begin{equation}\label{eq:Poissondualevp}
\left\{
\begin{array}{llll}
(\usigma,\utau)&+(u,\dv\utau)&=0&\forall\,\utau\in \HdivOmega,
\\
(\dv\usigma,v)&&=\lambda\ \pi^2\ (u,v)&\forall\,v\in L^2(\Omega).
\end{array}
\right.
\end{equation}
Note that on unit square, the eigenvalues of \eqref{eq:Poissondualevp} take the values $m^2+n^2$, $m,n\in\mathbb{N}^+$. Here we separate the effect of $\pi^2$ so that the results are easy to read. The respective eigenvalue problems of \eqref{eq:evpd} and \eqref{eq:Poissondual} are essentially equivalent to each other. The 10 smallest computed eigenvalues of \eqref{eq:Poissondualevp} on each series of grids are recorded in Tables \ref{tab:nrtevcc} and \ref{tab:nrtevunstr}. In the tables, we use ``L" to denote the level of each grid, and use $\searrow$/$\nearrow$ to denote the decreasing/increasing trend of the computed eigenvalues as the grids are refined and refined. The computed eigenvalues converge to the exact eigenvalues nicely. Moreover, it can be seen that the $\mathbb{RT}^{\rm nc}_h$ scheme for \eqref{eq:Poissondualevp} provides upper bounds to the exact eigenvalues. 
{\small
\begin{table}[!ht]
    \small
    \centering
    \begin{tabular}{|c|c|c|c|c|c|c|c|c|c|c|}
    \hline
    L&$\lambda_h^1$&$\lambda_h^2$&$\lambda_h^3$&$\lambda_h^4$&$\lambda_h^5$&$\lambda_h^6$&$\lambda_h^7$&$\lambda_h^8$&$\lambda_h^9$&$\lambda_h^{10}$\\
    \hline
        1 & 2.619  & 9.727  & 9.727  & 9.727  & 19.123  & 29.181  & 29.181  & 29.181  & 29.181  & 29.181  \\ \hline
        2 & 2.128  & 5.982  & 5.982  & 10.477  & 14.547  & 14.547  & 20.650  & 20.650  & 32.039  & 38.907  \\ \hline
        3 & 2.031  & 5.223  & 5.223  & 8.511  & 11.009  & 11.009  & 14.480  & 14.480  & 20.137  & 20.137  \\ \hline
        4 & 2.008  & 5.055  & 5.055  & 8.122  & 10.242  & 10.242  & 13.345  & 13.345  & 17.739  & 17.739  \\ \hline
        5 & 2.002  & 5.014  & 5.014  & 8.030  & 10.060  & 10.060  & 13.085  & 13.085  & 17.182  & 17.182  \\ \hline
         & $\searrow$ & $\searrow$ & $\searrow$ & $\searrow$ & $\searrow$ & $\searrow$ & $\searrow$ & $\searrow$ & $\searrow$ & $\searrow$ \\ \hline
    \end{tabular}
    \caption{Computed eigenvalues by $\mathbb{RT}^{\rm nc}_h$ scheme on crisscross grids.}
    \label{tab:nrtevcc}
\end{table}
}

\begin{table}[!ht]
    \small
    \centering
    \begin{tabular}{|c|c|c|c|c|c|c|c|c|c|c|}
 \hline
    L&$\lambda_h^1$&$\lambda_h^2$&$\lambda_h^3$&$\lambda_h^4$&$\lambda_h^5$&$\lambda_h^6$&$\lambda_h^7$&$\lambda_h^8$&$\lambda_h^9$&$\lambda_h^{10}$\\
        \hline
        1 & 2.474  & 6.921  & 7.575  & 12.150  & 17.167  & 20.663  & 21.687  & 23.653  & 25.825  & 26.359  \\ \hline
        2 & 2.123  & 5.636  & 5.770  & 9.850  & 12.660  & 13.141  & 18.222  & 18.582  & 24.122  & 25.578  \\ \hline
        3 & 2.031  & 5.164  & 5.199  & 8.474  & 10.712  & 10.778  & 14.307  & 14.357  & 18.985  & 19.218  \\ \hline
        4 & 2.008  & 5.041  & 5.050  & 8.119  & 10.182  & 10.195  & 13.325  & 13.336  & 17.520  & 17.565  \\ \hline
        5 & 2.002  & 5.010  & 5.013  & 8.030  & 10.046  & 10.049  & 13.081  & 13.084  & 17.132  & 17.142  \\ \hline
         & $\searrow$  & $\searrow$  & $\searrow$  & $\searrow$  & $\searrow$  & $\searrow$  & $\searrow$  & $\searrow$  & $\searrow$  & $\searrow$ \\ \hline
    \end{tabular}
    \caption{Computed eigenvalues by $\mathbb{RT}^{\rm nc}_h$ scheme on irregular grids.}
    \label{tab:nrtevunstr}
\end{table}

\subsubsection{Comparison with the classical Raviart-Thomas element}

\begin{figure}[htbp]
\includegraphics[width=0.32\textwidth]{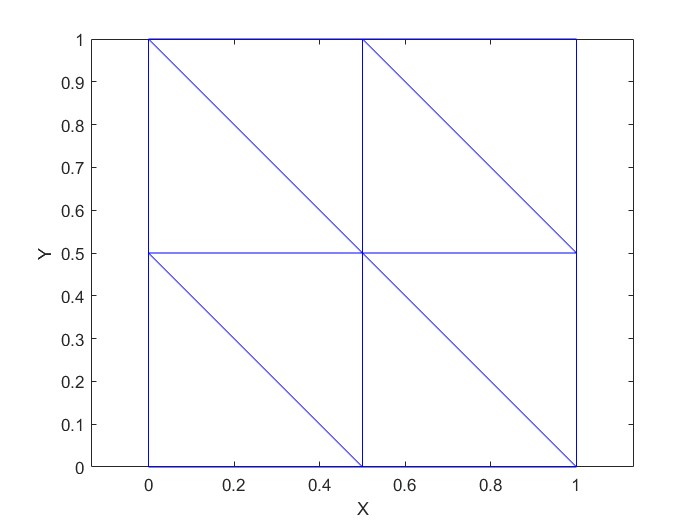}
\includegraphics[width=0.32\textwidth]{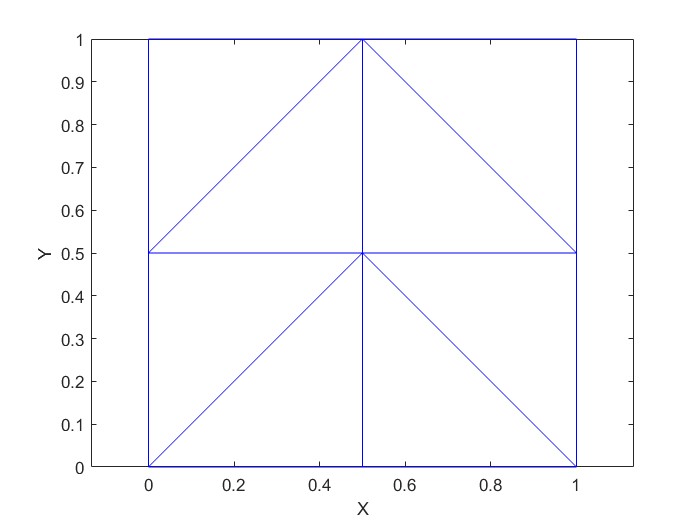}
\includegraphics[width=0.32\textwidth]{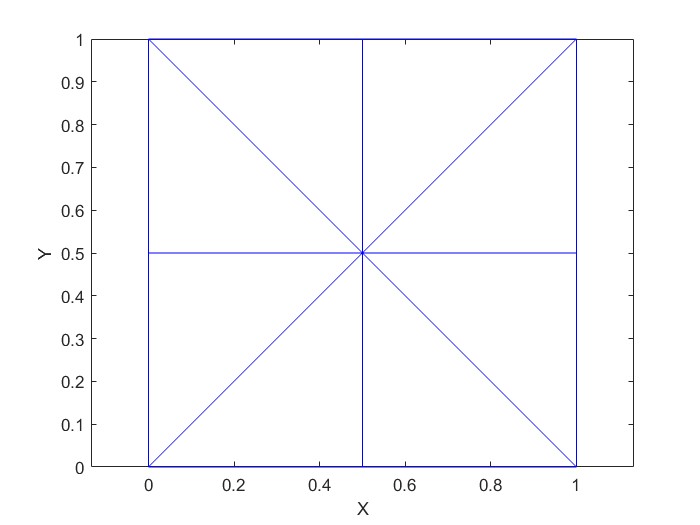}
\caption{The initial triangulations. Left: regular; middle: fish bone; right: union Jack.}\label{fig:threeregtri}
\end{figure}

We here show the experiments of solving the eigenvalue problem \eqref{eq:Poissondualevp} with the classical Raviart-Thomas element scheme on the crisscross triangulation, the regular triangulation (cf. Figure \ref{fig:threeregtri}, left), the fish-bone triangulation (cf. Figure \ref{fig:threeregtri}, middle), and the union Jack triangulation (cf. Figure \ref{fig:threeregtri}, right). The 10 smallest computed eigenvalues on each series of grids are recorded in Tables \ref{tab:rtevcc}, \ref{tab:rtevreg}, \ref{tab:rtevfish} and \ref{tab:rtevuj}. It can be seen that the classical (lowest-degree) Raviart-Thomas element might provide upper or lower bound to different eigenvalues, sensitive to the grid as well.\footnote{We do not think it is now found for the first time that the classical (lowest-degree) Raviart-Thomas element scheme cannot be expected to provide a certain bounds to the exact eigenvalues, though we do not find a referred literature.} Numerical experiments on these special triangulations which are easy to check are included.

\begin{table}[!ht]
    \small
    \centering
    \begin{tabular}{|c|c|c|c|c|c|c|c|c|c|c|}
 \hline
    L&$\lambda_h^1$&$\lambda_h^2$&$\lambda_h^3$&$\lambda_h^4$&$\lambda_h^5$&$\lambda_h^6$&$\lambda_h^7$&$\lambda_h^8$&$\lambda_h^9$&$\lambda_h^{10}$\\
    \hline
        1 & 1.858  & 4.158  & 4.158  & 8.254  & 9.727  & 12.042  & 12.042  & 12.733  & 14.590  & 14.590  \\ \hline
        2 & 1.965  & 4.893  & 4.893  & 7.431  & 9.850  & 9.850  & 11.731  & 11.731  & 14.847  & 15.317  \\ \hline
        3 & 1.991  & 4.975  & 4.975  & 7.862  & 9.986  & 9.986  & 12.712  & 12.712  & 17.071  & 17.071  \\ \hline
        4 & 1.998  & 4.994  & 4.994  & 7.966  & 9.998  & 9.998  & 12.929  & 12.929  & 17.024  & 17.024  \\ \hline
        5 & 1.999  & 4.998  & 4.998  & 7.991  & 9.999  & 9.999  & 12.982  & 12.982  & 17.006  & 17.006  \\ \hline
         & $\nearrow$ & $\nearrow$ & $\nearrow$ & $\nearrow$ & $\nearrow$ & $\nearrow$ & $\nearrow$ & $\nearrow$ & $\searrow$ & $\searrow$ \\ \hline
    \end{tabular}
    \caption{Computed eigenvalues by the classical Raviart-Thomas element scheme on crisscross grids.}
    \label{tab:rtevcc}
\end{table}

\begin{table}[!ht]
    \small
    \centering
    \begin{tabular}{|c|c|c|c|c|c|c|c|c|c|c|}
 \hline
    L&$\lambda_h^1$&$\lambda_h^2$&$\lambda_h^3$&$\lambda_h^4$&$\lambda_h^5$&$\lambda_h^6$&$\lambda_h^7$&$\lambda_h^8$&$\lambda_h^9$&$\lambda_h^{10}$\\
    \hline
        1 & 2.110  & 3.542  & 4.863  & 9.727  & 9.727  & 12.021  & 13.453  & 14.590  & ---  & ---  \\ \hline
        2 & 2.032  & 4.834  & 5.096  & 8.077  & 8.957  & 9.414  & 11.107  & 11.377  & 12.242  & 14.729  \\ \hline
        3 & 2.008  & 4.964  & 5.026  & 8.119  & 9.798  & 9.815  & 12.896  & 13.422  & 16.153  & 16.196  \\ \hline
        4 & 2.002  & 4.991  & 5.007  & 8.033  & 9.951  & 9.952  & 12.983  & 13.113  & 16.791  & 16.799  \\ \hline
        5 & 2.001  & 4.998  & 5.002  & 8.009  & 9.988  & 9.988  & 12.996  & 13.029  & 16.947  & 16.950  \\ \hline
         & $\searrow$ & $\nearrow$ & $\searrow$ & $\searrow$ & $\nearrow$ & $\nearrow$ & $\nearrow$ & $\searrow$ & $\nearrow$ & $\nearrow$ \\ \hline
    \end{tabular}
    \caption{Computed eigenvalues by the classical Raviart-Thomas element scheme on regular grids.}
    \label{tab:rtevreg}
\end{table}

\begin{table}[!ht]
    \small
    \centering
    \begin{tabular}{|c|c|c|c|c|c|c|c|c|c|c|}
 \hline
    L&$\lambda_h^1$&$\lambda_h^2$&$\lambda_h^3$&$\lambda_h^4$&$\lambda_h^5$&$\lambda_h^6$&$\lambda_h^7$&$\lambda_h^8$&$\lambda_h^9$&$\lambda_h^{10}$\\
    \hline
        1 & 2.084  & 4.127  & 4.127  & 9.727  & 9.727  & 12.895  & 12.895  & 14.590  & ---  & ---  \\ \hline
        2 & 2.032  & 4.943  & 4.959  & 8.337  & 8.881  & 8.989  & 11.359  & 11.501  & 12.716  & 13.188  \\ \hline
        3 & 2.008  & 4.993  & 4.995  & 8.126  & 9.788  & 9.800  & 13.153  & 13.166  & 16.107  & 16.159  \\ \hline
        4 & 2.002  & 4.999  & 4.999  & 8.034  & 9.950  & 9.951  & 13.047  & 13.048  & 16.790  & 16.794  \\ \hline
        5 & 2.001  & 5.000  & 5.000  & 8.009  & 9.988  & 9.988  & 13.012  & 13.012  & 16.948  & 16.948  \\ \hline
         & $\searrow$ & $\nearrow$ & $\nearrow$ & $\searrow$ & $\nearrow$ & $\nearrow$ & $\searrow$ & $\searrow$ & $\nearrow$ & $\nearrow$ \\ \hline
    \end{tabular}
    \caption{Computed eigenvalues by the classical Raviart-Thomas element scheme on fish-bone grids.}
    \label{tab:rtevfish}
\end{table}

\begin{table}[!ht]
    \small
    \centering
    \begin{tabular}{|c|c|c|c|c|c|c|c|c|c|c|}
 \hline
    L&$\lambda_h^1$&$\lambda_h^2$&$\lambda_h^3$&$\lambda_h^4$&$\lambda_h^5$&$\lambda_h^6$&$\lambda_h^7$&$\lambda_h^8$&$\lambda_h^9$&$\lambda_h^{10}$\\
    \hline
        1 & 2.432  & 4.127  & 4.127  & 7.295  & 9.727  & 12.895  & 12.895  & 14.590  & ---  & ---  \\ \hline
        2 & 2.030  & 4.925  & 4.925  & 8.315  & 9.727  & 9.727  & 11.501  & 11.501  & 13.497  & 13.497  \\ \hline
        3 & 2.008  & 4.993  & 4.993  & 8.120  & 9.786  & 9.786  & 13.133  & 13.133  & 16.097  & 16.097  \\ \hline
        4 & 2.002  & 4.999  & 4.999  & 8.033  & 9.950  & 9.950  & 13.047  & 13.047  & 16.789  & 16.789  \\ \hline
        5 & 2.001  & 5.000  & 5.000  & 8.009  & 9.988  & 9.988  & 13.012  & 13.012  & 16.948  & 16.948  \\ \hline
         & $\searrow$ & $\nearrow$ & $\nearrow$ & $\searrow$ & $\nearrow$ & $\nearrow$ & $\searrow$ & $\searrow$ & $\nearrow$ & $\nearrow$ \\ \hline
    \end{tabular}
    \caption{Computed eigenvalues by the classical Raviart-Thomas element scheme on union Jack grids.}
    \label{tab:rtevuj}
\end{table}

The $\mathbb{RT}^{\rm nc}_h$ scheme for \eqref{eq:Poissondualevp} is further carried out on the regular triangulation, fish-bone triangulation and the union Jack triangulation, and the 10 smallest computed eigenvalues on each series of grids are recorded in Tables \ref{tab:nrtevreg}, \ref{tab:nrtevfish} and \ref{tab:nrtevuj}. It can be seen that, in all these experiments, again, the $\mathbb{RT}^{\rm nc}_h$ scheme for \eqref{eq:Poissondualevp} provides upper bounds for all the eigenvalues. The robustness is improved with $\mathbb{RT}^{\rm nc}_h$. This will be further investigated in future. 

\begin{table}[!ht]
    \small
    \centering
    \begin{tabular}{|c|c|c|c|c|c|c|c|c|c|c|}
 \hline
    L&$\lambda_h^1$&$\lambda_h^2$&$\lambda_h^3$&$\lambda_h^4$&$\lambda_h^5$&$\lambda_h^6$&$\lambda_h^7$&$\lambda_h^8$&$\lambda_h^9$&$\lambda_h^{10}$\\
    \hline
        1 & 3.648  & 14.590  & 14.590  & 14.590  & 14.590  & 14.590  & 14.590  & 14.590  & ---  & ---  \\ \hline
        2 & 2.396  & 6.748  & 8.210  & 13.339  & 19.454  & 21.970  & 23.399  & 33.381  & 36.189  & 58.361  \\ \hline
        3 & 2.095  & 5.414  & 5.692  & 9.432  & 12.082  & 12.343  & 15.678  & 18.242  & 23.299  & 23.656  \\ \hline
        4 & 2.024  & 5.102  & 5.166  & 8.372  & 10.494  & 10.510  & 13.684  & 14.246  & 18.387  & 18.430  \\ \hline
        5 & 2.006  & 5.026  & 5.041  & 8.094  & 10.122  & 10.123  & 13.173  & 13.306  & 17.335  & 17.344  \\ \hline
         & $\searrow$ & $\searrow$ & $\searrow$ & $\searrow$ & $\searrow$ & $\searrow$ & $\searrow$ & $\searrow$ & $\searrow$ & $\searrow$ \\ \hline
    \end{tabular}
    \caption{Computed eigenvalues by $\mathbb{RT}^{\rm nc}_h$ scheme on regular grids.}
    \label{tab:nrtevreg}
\end{table}

\begin{table}[!ht]
    \small
    \centering
    \begin{tabular}{|c|c|c|c|c|c|c|c|c|c|c|}
 \hline
    L&$\lambda_h^1$&$\lambda_h^2$&$\lambda_h^3$&$\lambda_h^4$&$\lambda_h^5$&$\lambda_h^6$&$\lambda_h^7$&$\lambda_h^8$&$\lambda_h^9$&$\lambda_h^{10}$\\
    \hline
        1 & 3.648  & 14.590  & 14.590  & 14.590  & 14.590  & 14.590  & 14.590  & 14.590  & ---  & ---  \\ \hline
        2 & 2.395  & 7.247  & 7.455  & 14.590  & 17.639  & 20.437  & 26.875  & 32.313  & 36.332  & 58.361  \\ \hline
        3 & 2.095  & 5.537  & 5.552  & 9.559  & 11.969  & 12.131  & 16.941  & 17.131  & 22.453  & 23.322  \\ \hline
        4 & 2.024  & 5.133  & 5.134  & 8.380  & 10.485  & 10.497  & 13.960  & 13.973  & 18.334  & 18.398  \\ \hline
        5 & 2.006  & 5.033  & 5.033  & 8.094  & 10.121  & 10.122  & 13.239  & 13.240  & 17.334  & 17.339  \\ \hline
         & $\searrow$ & $\searrow$ & $\searrow$ & $\searrow$ & $\searrow$ & $\searrow$ & $\searrow$ & $\searrow$ & $\searrow$ & $\searrow$ \\ \hline
    \end{tabular}
    \caption{Computed eigenvalues by $\mathbb{RT}^{\rm nc}_h$ scheme on fish-bone grids.}
    \label{tab:nrtevfish}
\end{table}

\begin{table}[!ht]
    \small
    \centering
    \begin{tabular}{|c|c|c|c|c|c|c|c|c|c|c|}
 \hline
    L&$\lambda_h^1$&$\lambda_h^2$&$\lambda_h^3$&$\lambda_h^4$&$\lambda_h^5$&$\lambda_h^6$&$\lambda_h^7$&$\lambda_h^8$&$\lambda_h^9$&$\lambda_h^{10}$\\
    \hline
        1 & 2.918  & 14.590  & 14.590  & 14.590  & 14.590  & 14.590  & 14.590  & 14.590  & ---  & ---  \\ \hline
        2 & 2.366  & 7.274  & 7.274  & 11.672  & 19.454  & 19.454  & 29.531  & 29.531  & 43.615  & 58.361  \\ \hline
        3 & 2.087  & 5.505  & 5.505  & 9.466  & 11.963  & 11.963  & 16.852  & 16.852  & 22.973  & 22.973  \\ \hline
        4 & 2.022  & 5.121  & 5.121  & 8.349  & 10.447  & 10.447  & 13.893  & 13.893  & 18.258  & 18.258  \\ \hline
        5 & 2.005  & 5.030  & 5.030  & 8.086  & 10.109  & 10.109  & 13.218  & 13.218  & 17.301  & 17.301  \\ \hline
         & $\searrow$ & $\searrow$ & $\searrow$ & $\searrow$ & $\searrow$ & $\searrow$ & $\searrow$ & $\searrow$ & $\searrow$ & $\searrow$ \\ \hline
    \end{tabular}
    \caption{Computed eigenvalues by $\mathbb{RT}^{\rm nc}_h$ scheme on union Jack grids.}
    \label{tab:nrtevuj}
\end{table}

%
%
%
\section{Nonconforming finite element exterior calculus}
\label{sec:ncdcx}

\subsection{Nonconforming finite element spaces for $H\Lambda^k$ in $\rn$}

Let $\mathcal{G}_h$ be a simplicial subdivision of $\Omega$. For $0\leqslant k\leqslant n-1$, we define finite element spaces for $\hlk$ by
\begin{equation}
\mathbf{W}^{\rm nc}_h\Lambda^k:=\left\{\fomega_h\in  \mathcal{P}^-_1\Lambda^k(\mathcal{G}_h):
\langle\fomega_h,\odelta_{k+1}\feta_h\rangle_{L^2\Lambda^k}-\langle\od^k_h\fomega_h,\feta_h\rangle_{L^2\Lambda^{k+1}}=0,\ \forall\,\feta_h\in\fW^*_{h0}\Lambda^{k+1}\right\},
\end{equation}
and, for $H_0\Lambda^k$, 
\begin{equation} 
\mathbf{W}^{\rm nc}_{h0}\Lambda^k:=\left\{\fomega_h\in  \mathcal{P}^-_1\Lambda^k(\mathcal{G}_h): \langle\fomega_h,\odelta_{k+1}\feta_h\rangle_{L^2\Lambda^k}-\langle\od^k_h\fomega_h,\feta_h\rangle_{L^2\Lambda^{k+1}}=0,\ \forall\,\feta_h\in\fW^*_h\Lambda^{k+1}\right\}.
\end{equation}
Set
\begin{equation}
\fW^{\rm nc}_h\Lambda^n:=\mathcal{P}_0\Lambda^n(\mathcal{G}_h),\quad \mbox{and}\quad \fW^{\rm nc}_{h0}\Lambda^n:=\fW^{\rm nc}_h\Lambda^n\cap L^2_0\Lambda^n(\Omega).
\end{equation}

\begin{remark}
Note that $\fW^{\rm nc}_h\Lambda^0$ and $\fW^{\rm nc}_{h0}\Lambda^0$ are the lowest-degree Crouzeix-Raviart element spaces. If further $n=1$, $\fW^{\rm nc}_h\Lambda^0$ and $\fW^{\rm nc}_{h0}\Lambda^0$ coincide with the respective continuous linear element spaces. 
\end{remark}
\begin{remark}\label{rem:mutual}
Associated with the definitions, by \eqref{eq:n=r=c}, it holds that, for example, 
$$
\fW^*_h\Lambda^k=\left\{\fmu_h\in\mathcal{P}^{*,-}_1\Lambda^k(\mathcal{G}_h): \langle\odelta_k\fmu_h,\ftau_h\rangle_{L^2\Lambda^{k-1}}-\langle\fmu_h,\od^{k-1}_h\ftau_h\rangle_{L^2\Lambda^k}=0,\ \forall\,\ftau_h\in \fW^{\rm nc}_{h0}\Lambda^{k-1}\right\}.
$$
\end{remark}

By the same virtue of Theorem \ref{thm:localbasisrtabc}, noting \eqref{eq:n=r=c}, we can prove theorem below. 
\begin{theorem}
The space $\mathbf{W}^{\rm nc}_{h0}\Lambda^k$ admits a set of linear independent basis functions, which are each supported on two adjacent simplices.  

The space $\mathbf{W}^{\rm nc}_h\Lambda^k$ admits a set of linear independent basis functions; they consist of two types of functions, Type I and Type II. The functions of Type I are each supported on two adjacent simplices, and the functions of Type II are each supported on one simplex.
\end{theorem}

In the sequel, we use $\mathcal{F}^\mathcal{G}$ for a family of shape regular subdivisions of $\Omega$.

\subsubsection{Locally defined interpolator and optimal approximation}
\label{subsubsec:inthlk}
Similar to \eqref{eq:irtlocal}, we define a local interpolator $\mathbb{I}^{\od^k}_T:H\Lambda^k(T)\to \mathcal{P}^-_1\Lambda^k(T)$, $0\leqslant k\leqslant n-1$, such that, 
$$
\langle \mathbb{I}^{\od^k}_T\fomega,\odelta_{k+1}\feta\rangle_{L^2\Lambda^k(T)}-\langle\od^k\mathbb{I}^{\od^k}_T\fomega,\feta\rangle_{L^2\Lambda^{k+1}(T)}=\langle \fomega,\odelta_{k+1}\feta\rangle_{L^2\Lambda^k(T)}-\langle\od^k\fomega,\feta\rangle_{L^2\Lambda^{k+1}(T)},
$$ 
for any $\feta\in\mathcal{P}^{*,-}_1\Lambda^{k+1}(T)$, and, following \eqref{eq:globalintp}, define a global interpolator 
$$
\displaystyle\mathbb{I}^{\od^k}_h:\bigoplus_{T\in\mathcal{G}_h} E_T^\Omega H\Lambda^k(T)\to \mathcal{P}^-_1\Lambda^k(\mathcal{G}_h),\ \mbox{by}\ (\mathbb{I}^{\od^k}_h\fomega)|_T=\mathbb{I}^{\od^k}_T(\fomega|_T),\ \forall\,T\in\mathcal{G}_h.
$$ 
Set $\mathbb{I}^{\od^n}_T$ the $L^2(T)$ projection to $\mathcal{P}_0\Lambda^n$ on $T$, and $\mathbb{I}^{\od^n}_h$ the $L^2(\Omega)$ projection to $\mathcal{P}_0\Lambda^n(\mathcal{G}_h)$. 

Denote $\|\fmu_h\|_{\od^k_h}:=(\|\od^k_h\fmu_h\|_{L^2\Lambda^{k+1}}^2+\|\fmu_h\|_{L^2\Lambda^k}^2)^{1/2}$. The proofs of the two lemmas below are the same as that of Lemma \ref{lem:intdivinrt} and Lemma \ref{lem:intdiv}, and are omitted here. 
\begin{lemma}
$\R(\mathbb{I}^{\od^k}_h,\hlk)\subset \fW^{\rm nc}_h\Lambda^k$ and $\R(\mathbb{I}^{\od^k}_h,H_0\Lambda^k)\subset \fW^{\rm nc}_{h0}\Lambda^k$.
\end{lemma}

\begin{lemma}
With $C_{k,n}$ uniform for $\mathcal{F}^\mathcal{G}$, for $\mathcal{G}_h\in\mathcal{F}^\mathcal{G}$ and $\displaystyle\fomega\in \bigoplus_{T\in\mathcal{G}_h} E_T^\Omega H\Lambda^k(T)$, 
$$
\|\fomega-\mathbb{I}^{\rm \od^k}_h\fomega\|_{\od^k_h}\leqslant C_{k,n}\inf_{\feta_h\in \mathcal{P}^-_1\Lambda^k(\mathcal{G}_h)}\|\fomega-\feta_h\|_{\od^k_h}. 
$$ 
\end{lemma}

\subsubsection{Uniform discrete Poincar\'e inequalities}
\label{subsec:ubpi}
As generally $\R(\od^k_h,\fW^{\rm nc}_h\Lambda^k)\not\subset H\Lambda^{k+1}(\Omega)$, we cannot simply repeat the proof of Lemma \ref{lem:stabrtabc}. We adopt an indirect approach, which can be viewed a finite-dimensional analogue of the closed range theorem. 

Let $\xX$ and $\yY$ be two Hilbert spaces. For $(\oT,\xD):\xX\to \yY$ a closed  operator, denote 
$$
\xD^{\boldsymbol \lrcorner_{\oT}}:=\left\{\xv\in \xD:\langle \xv,\xw\rangle_\xX=0,\ \forall\,\xw\in \mathcal{N}(\oT,\xD)\right\}.
$$ 
Define the {\bf Poincar\'e inequality's criterion} of $(\oT,\xD)$ as
\begin{equation}\label{eq:deficr}
\mathsf{pic}(\oT,\xD):=\left\{\begin{array}{rl}
\displaystyle \sup_{0\neq\xv\in \xD^{\boldsymbol \lrcorner_{\oT}}}\frac{\|\xv\|_\xX}{\|\oT\xv\|_\yY},&\mbox{if}\ \xD^{\boldsymbol \lrcorner_{\oT}}\neq\left\{0\right\};
\\
0,&\mbox{if}\ \xD^{\boldsymbol \lrcorner_{\oT}}=\left\{0\right\}.
\end{array}\right.
\end{equation}

If $\icr(\oT,\xD)$ is finite, then the Poincar\'e inequality holds for $(\oT,\xD)$. It is further indeed the best constant of the Poincar\'e inequality. The index can be used for a criterion for closed range. We refer to, e.g., \cite[Lemma 3.6]{Arnold.D2018feec} for a proof of Lemma \ref{lem:pivscr} up to little technical modification. 
\begin{lemma}\label{lem:pivscr}
For $(\oT,\xD):\xX\to \yY$ a closed operator, $\R(\oT,\xD)$ is closed if and only if $\icr(\xT,\xD)<+\infty$.
\end{lemma}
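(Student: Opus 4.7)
The plan is to argue via the closed graph / bounded inverse theorem applied to the restriction $\oT|_{\xD^{\boldsymbol \lrcorner}}$, which, as already noted in the excerpt, is a bijection from $\xD^{\boldsymbol \lrcorner}$ onto $\R(\oT,\xD)$. The setup I would put in place first is this: since $(\oT,\xD)$ is closed, the kernel $\mathcal{N}(\oT,\xD)$ is closed in $\xX$, so $\xD^{\boldsymbol \lrcorner}=\xD\cap\mathcal{N}(\oT,\xD)^{\perp_\xX}$, and $\xD^{\boldsymbol \lrcorner}$ becomes a Hilbert space when equipped with the graph norm $\|\xv\|^2_{\rm gr}:=\|\xv\|_\xX^2+\|\oT\xv\|_\yY^2$ (closedness of $(\oT,\xD)$ makes $\xD$ complete in this norm, and $\xD^{\boldsymbol \lrcorner}$ is closed in $\xD_{\rm gr}$ since $\mathcal{N}(\oT,\xD)^{\perp_\xX}$ is closed in $\xX$ and the graph norm dominates $\|\cdot\|_\xX$).

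For the ``$\Leftarrow$'' direction, assume $\icr(\oT,\xD)=C<+\infty$. Take any sequence $\{\oT\xv_n\}\subset\R(\oT,\xD)$ with $\oT\xv_n\to\xy$ in $\yY$; without loss of generality $\xv_n\in\xD^{\boldsymbol \lrcorner}$. The inequality $\|\xv\|_\xX\leqslant C\|\oT\xv\|_\yY$ on $\xD^{\boldsymbol \lrcorner}$ shows $\{\xv_n\}$ is Cauchy in $\xX$, hence converges to some $\xv\in\xX$. Then $(\xv_n,\oT\xv_n)\to(\xv,\xy)$ in $\xX\times\yY$, and closedness of $(\oT,\xD)$ gives $\xv\in\xD$ with $\oT\xv=\xy\in\R(\oT,\xD)$, proving closedness of the range.

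For the ``$\Rightarrow$'' direction, assume $\R(\oT,\xD)$ is closed in $\yY$, and thus a Hilbert space in its own right. The restriction $\oT|_{\xD^{\boldsymbol \lrcorner}}:(\xD^{\boldsymbol \lrcorner},\|\cdot\|_{\rm gr})\to(\R(\oT,\xD),\|\cdot\|_\yY)$ is a continuous bijection between two Hilbert spaces (continuity is trivial since $\|\oT\xv\|_\yY\leqslant\|\xv\|_{\rm gr}$). By the bounded inverse theorem, its inverse is bounded, so there exists $C'>0$ with $\|\xv\|_\xX^2+\|\oT\xv\|_\yY^2\leqslant (C')^2\|\oT\xv\|_\yY^2$ for every $\xv\in\xD^{\boldsymbol \lrcorner}$; rearranging gives $\|\xv\|_\xX\leqslant\sqrt{(C')^2-1}\,\|\oT\xv\|_\yY$ when $\xD^{\boldsymbol \lrcorner}\neq\{0\}$ (otherwise $\icr=0$ by definition). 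In either case $\icr(\oT,\xD)<+\infty$.

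Essentially nothing in this argument is an ``obstacle''; the only small care point is the bookkeeping that $\xD^{\boldsymbol \lrcorner}$ is complete under the graph norm (so that the bounded inverse theorem applies), and that the $\icr$ quantity compares $\|\xv\|_\xX$ rather than $\|\xv\|_{\rm gr}$ against $\|\oT\xv\|_\yY$, which is why the final inequality is written with $\sqrt{(C')^2-1}$ instead of $C'$ itself. Once these are settled, the equivalence is immediate and, as remarked in the excerpt, this essentially reproduces the argument in Lemma 3.6 of \cite{Arnold.D2018feec}.
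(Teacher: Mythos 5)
Your proof is correct and is essentially the standard argument the paper points to (Lemma 3.6 of Arnold's FEEC book): the paper itself gives no proof, only that citation, and your two directions — the Cauchy-sequence argument via the Poincar\'e-type bound for ``$\Leftarrow$'' and the bounded inverse theorem on $\bigl(\xD^{\boldsymbol\lrcorner},\|\cdot\|_{\rm gr}\bigr)$ for ``$\Rightarrow$'' — are exactly that proof, with the completeness bookkeeping and the trivial case $\xD^{\boldsymbol\lrcorner}=\{0\}$ handled correctly.
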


The main estimation is the theorem below. 
\begin{theorem}\label{thm:piwabc}
With a constant $C_{k,n}$ uniform for $\mathcal{F}^\mathcal{G}$, 
$$
\icr(\od_h^k,\fW^{\rm nc}_h\Lambda^k)\leqslant C_{k,n}.
$$
\end{theorem}
We firstly present three lemmas below, and postpone their proofs to appendix Section \ref{sec:localwhitney}.
\begin{lemma}\label{lem:discontbpi}
$\icr(\od_h^k,\fW^{\rm nc}_h\Lambda^k)\leqslant \icr(\odelta_{k+1},\fW^*_{h0}\Lambda^{k+1})+2\icr(\od_h^k,\mathcal{P}^-_1\Lambda^k(\mathcal{G}_h)).$
\end{lemma}
\begin{lemma}\label{lem:localhot}
$\icr(\od_h^k,\mathcal{P}^-_1\Lambda^k(\mathcal{G}_h))=\mathcal{O}(h)$.
\end{lemma}

\begin{lemma}\label{lem:discrt}
$\left|\icr(\odelta_{k+1},\fW^*_{h0}\Lambda^{k+1})-\icr(\od_h^k,\fW^{\rm nc}_h\Lambda^k)\right|=\mathcal{O}(h)$.
\end{lemma}

\paragraph{\bf Proof of Theorem \ref{thm:piwabc}}
It is well known that (c.f., e.g., \cite{Arnold.D;Falk.R;Winther.R2006acta}), there exists a constant $C_{k,n}$ such that $\icr(\od^k,\mathbf{W}_h\Lambda^k)\leqslant C_{k,n}$, and, $\icr(\od^k,\mathbf{W}_{h0}\Lambda^k)\leqslant C_{k,n}$, which implies immediately that $\icr(\odelta_{k+1},\fW^*_{h0}\Lambda^{k+1})$ and $\icr(\odelta_{k+1},\fW^*_h\Lambda^{k+1})$ are uniformly bounded. It follows then $\icr(\od_h^k,\fW^{\rm nc}_h\Lambda^k)\leqslant C_{k,n}$. \qed

Similarly, $\icr(\od_{h0}^k,\fW^{\rm nc}_h\Lambda^k)\leqslant C_{k,n}$.

\subsubsection{Finite element schemes for elliptic variational problems}


Consider the elliptic variational problem: given $\ff\in L^2\Lambda^k$, find $\fomega\in H\Lambda^k$, such that 
\begin{equation}\label{eq:evpdlk}
\langle\od^k \fomega,\od^k \fmu\rangle_{L^2\Lambda^{k+1}}+\langle \fomega,\fmu\rangle_{L^2\Lambda^k}=\langle \ff,\fmu\rangle_{L^2\Lambda^k},\ \ \forall\,\fmu\in L^2\Lambda^k.
\end{equation}
It follows that $\od^k\fomega \in H^*_0\Lambda^{k+1}$, and $\odelta_{k+1}\od^k\fomega+\fomega=\ff$.

We consider its finite element discretization: find $\fomega\in\fW^{\rm nc}_h\Lambda^k$, such that 
\begin{equation}\label{eq:disprolk}
\langle\od^k_h \fomega_h,\od^k_h \fmu_h\rangle_{L^2\Lambda^{k+1}}+\langle \fomega_h,\fmu_h\rangle_{L^2\Lambda^k}=\langle \ff,\fmu_h\rangle_{L^2\Lambda^k},\ \ \forall\,\fmu_h\in \fW^{\rm nc}_h\Lambda^k.
\end{equation}

Immediately \eqref{eq:evpdlk} and \eqref{eq:disprolk} are well-posed. 

\begin{theorem}\label{thm:basicestrn}
Let $\fomega$ and $\fomega_h$ be the solutions of \eqref{eq:evpdlk} and \eqref{eq:disprolk}, respectively. 
\begin{equation*}
\|\fomega-\fomega_h\|_{\od^k_h}\leqslant 2\inf_{\fmu_h\in\fW^{\rm nc}_h}\|\fomega-\fmu_h\|_{\od^k_h}+\inf_{\ftau_h\in \fW^*_{h0}\Lambda^{k+1}}\|\od^k\fomega-\ftau_h\|_{\odelta_{k+1}}.
\end{equation*}
\end{theorem}
The proof is the same as that of Theorem \ref{thm:basicest}, and is omitted here.

\subsection{Discrete Helmholtz-Hodge decompositions of $\mathcal{P}_0\Lambda^k(\mathcal{G}_h)$}
\label{subsec:decoms}
%

\begin{theorem}[Discrete Helmholtz decomposition]  \label{thm:dishd}
Orthogonal in $L^2\Lambda^k(\Omega)$, for $1\leqslant k\leqslant n$,
$$
\mathcal{P}_0\Lambda^k(\mathcal{G}_h)=\mathcal{R}(\od^{k-1}_h,\mathbf{W}^{\rm nc}_h\Lambda^{k-1}) \opp \mathcal{N}(\odelta_k,\mathbf{W}_{h0}^*\Lambda^k)=\mathcal{R}(\od^{k-1}_h,\mathbf{W}^{\rm nc}_{h0}\Lambda^{k-1}) \opp \mathcal{N}(\odelta_k,\mathbf{W}_h^*\Lambda^k);
$$
for $0\leqslant k\leqslant n-1$,
$$
\mathcal{P}_0\Lambda^k(\mathcal{G}_h)=\mathcal{N}(\od^k_h,\mathbf{W}^{\rm nc}_h\Lambda^k) \opp \mathcal{R}(\odelta_{k+1},\mathbf{W}_{h0}^*\Lambda^{k+1})=\mathcal{N}(\od^k_h,\mathbf{W}^{\rm nc}_{h0}\Lambda^k) \opp \mathcal{R}(\odelta_{k+1},\mathbf{W}_h^*\Lambda^{k+1}).
$$
\end{theorem}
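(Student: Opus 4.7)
The plan is to obtain this discrete Helmholtz decomposition as a direct application of the Helmholtz decomposition for partially adjoint operators established in Theorem \ref{thm:slicedualitydisweakdis}(2) to the four partially adjoint pairs built in Section \ref{sec:ABCDtoHd}, with the base operator pair $\left[(\od^{k-1}_h,\mathcal{P}^-_1\Lambda^{k-1}(\mathcal{G}_h)),(\odelta_{k,h},\mathcal{P}^{*,-}_1\Lambda^k(\mathcal{G}_h))\right]$ (or the one shifted by one degree).

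First, I would record the structure of this base pair. As noted at the start of Section \ref{sec:ABCDtoHd}, we have $\uxP(\mathcal{G})=\{0\}$ and $\uyP(\mathcal{G})=\{0\}$, and consequently $\xP_{\rm B}(\mathcal{G})=\mathcal{P}^-_1\Lambda^{k-1}(\mathcal{G}_h)$ and $\yP_{\rm B}(\mathcal{G})=\mathcal{P}^{*,-}_1\Lambda^k(\mathcal{G}_h)$. By Lemma \ref{lem:localcouple} applied cell by cell and assembled tensorially,
\begin{equation*}
\R(\od^{k-1}_h,\mathcal{P}^-_1\Lambda^{k-1}(\mathcal{G}_h))=\mathcal{P}_0\Lambda^k(\mathcal{G}_h)=\N(\odelta_{k,h},\mathcal{P}^{*,-}_1\Lambda^k(\mathcal{G}_h)),
\end{equation*}
and by the Hodge-star dual version,
\begin{equation*}
\R(\odelta_{k,h},\mathcal{P}^{*,-}_1\Lambda^k(\mathcal{G}_h))=\mathcal{P}_0\Lambda^{k-1}(\mathcal{G}_h)=\N(\od^{k-1}_h,\mathcal{P}^-_1\Lambda^{k-1}(\mathcal{G}_h)).
\end{equation*}
In particular, the hypotheses of the Helmholtz part of Theorem \ref{thm:slicedualitydisweakdis}, namely $\N(\od^{k-1}_h,\xP_{\rm B}(\mathcal{G}))=\R(\odelta_{k,h},\yP_{\rm B}(\mathcal{G}))$ and $\N(\odelta_{k,h},\yP_{\rm B}(\mathcal{G}))=\R(\od^{k-1}_h,\xP_{\rm B}(\mathcal{G}))$, are satisfied.

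Next I would apply Theorem \ref{thm:slicedualitydisweakdis}(2) to the partially adjoint pair $\left[(\od^{k-1}_h,\mathbf{W}^{\rm abc}_h\Lambda^{k-1}),(\odelta_k,\mathbf{W}^*_{h0}\Lambda^k)\right]$, taking the second of the two displayed identities with $\oT_\mathcal{G}\leftrightarrow \od^{k-1}_h$ and $\aoT_\mathcal{G}\leftrightarrow \odelta_k$. Since $\uyP(\mathcal{G})=\{0\}$, the left-hand side collapses to $\R(\od^{k-1}_h,\mathcal{P}^-_1\Lambda^{k-1}(\mathcal{G}_h))=\mathcal{P}_0\Lambda^k(\mathcal{G}_h)$, yielding
\begin{equation*}
\mathcal{P}_0\Lambda^k(\mathcal{G}_h)=\R(\od^{k-1}_h,\mathbf{W}^{\rm abc}_h\Lambda^{k-1})\opp\N(\odelta_k,\mathbf{W}^*_{h0}\Lambda^k).
\end{equation*}
The same argument applied to $\left[(\od^{k-1}_h,\mathbf{W}^{\rm abc}_{h0}\Lambda^{k-1}),(\odelta_k,\mathbf{W}^*_h\Lambda^k)\right]$ gives the second identity for $1\leqslant k\leqslant n$. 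The two identities for $0\leqslant k\leqslant n-1$ follow symmetrically by applying the first Helmholtz identity of Theorem \ref{thm:slicedualitydisweakdis}(2) (with $\uxP(\mathcal{G})=\{0\}$ forcing $\R(\odelta_{k+1,h},\mathcal{P}^{*,-}_1\Lambda^{k+1}(\mathcal{G}_h))=\mathcal{P}_0\Lambda^k(\mathcal{G}_h)$) to the two partially adjoint pairs coupling $\od^k_h$ and $\odelta_{k+1}$.

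There is no genuine obstacle in this proof: the theorem is a clean corollary of the partially adjoint machinery once one recognizes that the cell-wise Whitney/trimmed-Whitney pair collapses the base range/null-space structure to $\mathcal{P}_0\Lambda^\bullet(\mathcal{G}_h)$. The only care point is bookkeeping the shift of index $k$ and the two sides (with/without essential boundary conditions) correctly, and noting that the orthogonality $\opp$ is taken in $L^2\Lambda^k(\Omega)$, as provided by Theorem \ref{thm:slicedualitydisweakdis} directly.
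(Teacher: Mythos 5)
Your proposal is correct and is exactly the route the paper intends: the paper states that all results in this subsection ``follow by Section \ref{sec:linespad} directly,'' and your argument is precisely the application of the Helmholtz part of Theorem \ref{thm:slicedualitydisweakdis}(2) to the four partially adjoint pairs of Section \ref{sec:ABCDtoHd}, with the hypotheses $\N(\oTG,\xP_{\rm B}(\mathcal{G}))=\R(\aoTG,\yP_{\rm B}(\mathcal{G}))$ and $\N(\aoTG,\yP_{\rm B}(\mathcal{G}))=\R(\oTG,\xP_{\rm B}(\mathcal{G}))$ verified via Lemma \ref{lem:localcouple} and the collapse $\uxP(\mathcal{G})=\uyP(\mathcal{G})=\{0\}$. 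The index bookkeeping and the identification of the base range with $\mathcal{P}_0\Lambda^k(\mathcal{G}_h)$ are all handled correctly.
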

\begin{proof}
~~We are going to show, for $1\leqslant k\leqslant n$,
$$
\mathcal{P}_0\Lambda^k(\mathcal{G}_h)=\mathcal{R}(\od^{k-1}_h,\mathbf{W}^{\rm nc}_h\Lambda^{k-1}) \opp \mathcal{N}(\odelta_k,\mathbf{W}_{h0}^*\Lambda^k),$$
and other assertions follow the same way. 

By construction, $\mathcal{P}_0\Lambda^k(\mathcal{G}_h)$ contains $\mathcal{R}(\od^{k-1}_h,\mathbf{W}^{\rm nc}_h\Lambda^{k-1}) \opp \mathcal{N}(\odelta_k,\mathbf{W}_{h0}^*\Lambda^k)$. Conversely, let $\fsigma_h\in \mathcal{P}_0\Lambda^k(\mathcal{G}_h)\omp \mathcal{R}(\od^{k-1}_h,\fW^{\rm nc}_h\Lambda^{k-1})$. Then for any $\fmu_h\in \mathbf{W}^{\rm nc}_h\Lambda^{k-1}$, 
$$
\sum_{T\in\mathcal{G}_h}\langle\fsigma_h,\od^{k-1}\fmu_h\rangle_{\fL^2\Lambda^k(T)}+\langle\odelta_k\fsigma_h,\fmu_h\rangle_{\fL^2\Lambda^{k-1}(T)}=\langle\fsigma_h,\od^{k-1}_h\fmu_h\rangle_{\fL^2\Lambda^k}=0.
$$
Namely $\fsigma_h\in \fW^*_{h0}\Lambda^k$ and further $\fsigma_h\in \mathcal{N}(\odelta_k,\mathbf{W}_{h0}^*\Lambda^k)$. This completes the proof. 
\end{proof}

By noting that, for $1\leqslant k\leqslant n-1$, 
$$
\mathcal{P}_0\Lambda^k(\mathcal{G}_h)=\mathcal{R}(\od^{k-1}_h,\mathbf{W}^{\rm nc}_h\Lambda^{k-1}) \opp \mathcal{N}(\odelta_k,\mathbf{W}_{h0}^*\Lambda^k) = \mathcal{N}(\od^k_h,\mathbf{W}^{\rm nc}_h\Lambda^k) \opp \mathcal{R}(\odelta_{k+1},\mathbf{W}_{h0}^*\Lambda^{k+1}),
$$
we have immediately that, for $1\leqslant k\leqslant n-1$, 
\begin{equation}\label{eq:complexduality}
\mathcal{R}(\od^{k-1}_h,\mathbf{W}^{\rm nc}_h\Lambda^{k-1})\subset \mathcal{N}(\od^k_h,\mathbf{W}^{\rm nc}_h\Lambda^k) \Longleftrightarrow     \mathcal{R}(\odelta_{k+1},\mathbf{W}_{h0}^*\Lambda^{k+1}) \subset \mathcal{N}(\odelta_k,\mathbf{W}_{h0}^*\Lambda^k).
\end{equation}

Further, we can construct the discrete Poincar\'e-Lefschetz duality identities below.
\begin{theorem} [Discrete Poincar\'e-Lefschetz duality] \label{thm:displd}
For $1\leqslant k\leqslant n-1$, 
$$
\mathcal{N}(\od^k_h,\mathbf{W}^{\rm nc}_h\Lambda^k)\omp \mathcal{R}(\od^{k-1}_h,\mathbf{W}^{\rm nc}_h\Lambda^{k-1}) 
= 
 \mathcal{N}(\odelta_k,\mathbf{W}_{h0}^*\Lambda^k) \omp \mathcal{R}(\odelta_{k+1},\mathbf{W}_{h0}^*\Lambda^{k+1})
$$
and
$$
\mathcal{N}(\od^k_h,\mathbf{W}^{\rm nc}_{h0}\Lambda^k)\omp \mathcal{R}(\od^{k-1}_h,\mathbf{W}^{\rm nc}_{h0}\Lambda^{k-1}) 
=
\mathcal{N}(\odelta_k,\mathbf{W}_h^*\Lambda^k) \omp \mathcal{R}(\odelta_{k+1},\mathbf{W}_h^*\Lambda^{k+1}).
$$
\end{theorem}

Denote $\hf_h^{\rm nc}\Lambda^k:=\mathcal{N}(\od^k_h,\mathbf{W}^{\rm nc}_h\Lambda^k)\omp \mathcal{R}(\od^{k-1}_h,\mathbf{W}^{\rm nc}_h\Lambda^{k-1})$ and 
$\hf^{\rm nc}_{h0}\Lambda^k:=\mathcal{N}(\od^k_h,\mathbf{W}^{\rm nc}_{h0}\Lambda^k)\omp \mathcal{R}(\od^{k-1}_h,\mathbf{W}^{\rm nc}_{h0}\Lambda^{k-1}).$ We have the discrete orthogonal decomposition of $\mathcal{P}_0\Lambda^k(\mathcal{G}_h)$ below. 
\begin{theorem}[Discrete Hodge decomposition]\label{thm:dhgd}
For $1\leqslant k\leqslant n-1$,
\begin{multline}\label{eq:dishgd}
\displaystyle
\qquad\mathcal{P}_0\Lambda^k(\mathcal{G}_h)=\mathcal{R}(\od^{k-1}_h,\mathbf{W}^{\rm nc}_h\Lambda^{k-1}) \opp \hf^*_{h0}\Lambda^k(= \hf_h^{\rm nc}\Lambda^k) \opp \mathcal{R}(\odelta_{k+1},\mathbf{W}_{h0}^*\Lambda^{k+1}) 
\\
\displaystyle =\mathcal{R}(\od^{k-1}_h,\mathbf{W}^{\rm nc}_{h0}\Lambda^{k-1}) \opp \hf_{h0}^{\rm nc}\Lambda^k (= \hf^*_h\Lambda^k) \opp \mathcal{R}(\odelta_{k+1},\mathbf{W}_h^*\Lambda^{k+1}).\qquad
\end{multline}
\end{theorem}

\begin{remark}
Existing discrete Hodge decompositions in literature are of discretized spaces of $H\Lambda^k$; see \cite[(5.6)]{Arnold.D2018feec} for example. Contrastly, Theorem \ref{thm:dhgd} is of a discretization of $L^2\Lambda^k$. 
\end{remark}

\subsection{Commutative diagrams}
\begin{lemma}
For any $\fmu\in H\Lambda^k(T)$, $0\leqslant k\leqslant n-1$,
$\mathbb{I}_T^{\od^{k+1}}\od^k\fmu=\od^k\mathbb{I}_T^{\od^k}\fmu$.
\end{lemma}
\begin{proof}
~~Since $\od^{k+1}\od^k\fmu=0$, $\od^{k+1}\mathbb{I}^{\od^{k+1}}_T\od^k\fmu=0$. Further, $\mathbb{I}^{\od^{k+1}}_T\od^k\fmu\in\mathcal{P}_0\Lambda^{k+1}$. Then, 
\begin{multline*}
\langle \mathbb{I}_T^{\od^{k+1}}\od^k\fmu,\odelta_{k+2}\feta\rangle_{L^2\Lambda^{k+1}(T)}-\underline{\langle \od^{k+1}\mathbb{I}^{\od^{k+1}}_T\od^k\fmu,\feta \rangle_{L^2\Lambda^{k+2}(T)}}
\\
=\langle \od^k\fmu,\odelta_{k+2}\feta\rangle_{L^2\Lambda^{k+1}(T)}-\underline{\langle \od^{k+1}\od^k\fmu,\feta \rangle_{L^2\Lambda^{k+2}(T)}}
= \langle \od^k\fmu,\odelta_{k+2}\feta\rangle_{L^2\Lambda^{k+1}(T)}-\underline{\langle\fmu,\odelta_{k+1}\odelta_{k+2}\feta\rangle_{L^2\Lambda^k(T)}}
\\
=\langle \od^k\mathbb{I}^{\od^k}_T\fmu,\odelta_{k+2}\feta\rangle_{L^2\Lambda^{k+1}(T)}-\underline{\langle\mathbb{I}^{\od^k}_T\fmu,\odelta_{k+1}\odelta_{k+2}\feta\rangle_{L^2\Lambda^k(T)}},\ \ \forall\,\feta\in \mathcal{P}^{*,-}_1\Lambda^{k+2}(T).
\end{multline*}
Here we use underline to label the vanishing terms. Therefore, $\mathbb{I}_T^{\od^{k+1}}\od^k\fmu=\od^k\mathbb{I}_T^{\od^k}\fmu$. 
\end{proof}

Immediately we have, for any $\fmu\in H\Lambda^k(\Omega)$, $\mathbb{I}_h^{\od^{k+1}}\od^k\fmu=\od^k_h\mathbb{I}_h^{\od^k}\fmu$, $0\leqslant k\leqslant n-1$.

We summarize all above to theorem below. 
\begin{theorem} 
The following de Rham complexes commute:
\begin{equation}\label{eq:cdnb}
	\begin{array}{ccccccccc}
	\mathbb{R} & ~~~\xrightarrow{inc}~~~ & H\Lambda^0 & ~~~\xrightarrow{\od^0}~~~ & H\Lambda^1 & ~~~\xrightarrow{\od^1}~~~ &  ... & ~~~\xrightarrow{\od^{n-1}}~~~ &  H\Lambda^n \\
	& & \downarrow \mathbb{I}^{\od^0}_h & & \downarrow \mathbb{I}^{\od^1}_h & &&& \downarrow \mathbb{I}^{\od^n}_h  \\
	\mathbb{R} & \xrightarrow{inc} & \mathbf{W}^{\rm nc}_h\Lambda^0 & \xrightarrow{\od^0_h} & \mathbf{W}^{\rm nc}_h\Lambda^1 & \xrightarrow{\od^1_h} & ... & ~~~\xrightarrow{\od^{n-1}_h}~~~ & \mathbf{W}^{\rm nc}_h\Lambda^n 
	\end{array};
\end{equation}

\begin{equation}\label{eq:cdwb}
	\begin{array}{ccccccccc}
 0 & ~~~\longrightarrow~~~ & H_0\Lambda^0 & ~~~\xrightarrow{\od^0}~~~ & H_0\Lambda^1 & ~~~\xrightarrow{\od^1}~~~ &  ... & ~~~\xrightarrow{\od^{n-1}}~~~ &  H_0\Lambda^n \\
	& & \downarrow \mathbb{I}^{\od^0}_h & & \downarrow \mathbb{I}^{\od^1}_h & &&& \downarrow \mathbb{I}^{\od^n}_h  \\
	0& \longrightarrow & \mathbf{W}^{\rm nc}_{h0}\Lambda^0 & \xrightarrow{\od^0_h} & \mathbf{W}^{\rm nc}_{h0}\Lambda^1 & \xrightarrow{\od^1_h} & ... & ~~~\xrightarrow{\od^{n-1}_h}~~~ & \mathbf{W}^{\rm nc}_{h0}\Lambda^n 
	\end{array}.
\end{equation}
\end{theorem}

\begin{remark}
Given Theorem \ref{thm:displd} the discrete Poincar\'e-Lefschetz duality, we are actually led to that, once one of the four complexes in \eqref{eq:cdnb} and \eqref{eq:cdwb} is exact, so are the three others. 
\end{remark}


%
%
%
\section{Discretization of the Hodge Laplace problem with nonconforming spaces for $H\Lambda^k$}
\label{sec:dishl}

In this section, we study the discretizations of the Hodge Laplace problem: given $\ff\in L^2\Lambda^k$, with $\mathbf{P}^k_{\hf}$ the $L^2$ projection to $\hf\Lambda^k$, find $\fomega\in H\Lambda^k(\Omega)\cap H^*_0\Lambda^k(\Omega)$ with $\od^k\fomega\in H^*_0\Lambda^{k+1}(\Omega)$, such that 
\begin{equation}
\fomega\perp\hf\Lambda^k(\Omega),\quad\mbox{and}\quad
\odelta_{k+1}\od^k\fomega+\od^{k-1}\odelta_k\fomega=\ff-\mathbf{P}^k_{\hf}\ff.
\end{equation}
The primal weak formulation is: find $\fomega\in H\Lambda^k\cap H^*_0\Lambda^k$, such that 
\begin{equation}\label{eq:modelhlori}
\left\{
\begin{array}{rll}
\langle\fomega,\fvarsigma\rangle_{L^2\Lambda^k}&=0, & \forall\,\fvarsigma\in \hf\Lambda^k,\ 
\\  
\langle\od^k\fomega,\od^k\fmu\rangle_{L^2\Lambda^{k+1}}+\langle\odelta_k\fomega,\odelta_k\fmu\rangle_{L^2\Lambda^{k-1}}&=\langle\ff-\mathbf{P}^k_{\hf}\ff,\fmu\rangle_{L^2\Lambda^k}, & \forall\,\fmu\in H\Lambda^k(\Omega)\cap H^*_0\Lambda^k(\Omega).
\end{array}\right.
\end{equation}
A standard mixed formulation based on $\fomega\in H\Lambda^k$ is generally used (\cite{Arnold.D2018feec}), which seeks $(\fomega^{\bf p},\fsigma^{\bf p},\fvartheta^{\bf p})\in H\Lambda^k \times H\Lambda^{k-1}\times\hf\Lambda^k$, such that, for $(\fmu,\ftau,\fvarsigma)\in H\Lambda^k \times H\Lambda^{k-1}\times\hf\Lambda^k$, 
\begin{equation}\label{eq:mixedhodgeclassicalp}
\left\{
\begin{array}{cccll}
&&\langle\fomega^{\bf p},\fvarsigma\rangle_{L^2\Lambda^k}&=0
\\
&\langle \fsigma^{\bf p},\ftau\rangle_{L^2\Lambda^{k+1}}&-\langle\fomega^{\bf p},\od^{k-1}\ftau\rangle_{L^2\Lambda^k}&=0
\\
\langle\fvartheta^{\bf p},\fmu\rangle_{L^2\Lambda^k}&+\langle \od^{k-1}\fsigma^{\bf p},\fmu\rangle_{L^2\Lambda^k}&+\langle\od^k\fomega^{\bf p},\od^k\fmu\rangle_{L^2\Lambda^{k-1}}&=\langle \ff,\fmu\rangle_{L^2\Lambda^k}
\end{array}
\right..
\end{equation}
In this section, we investigate the application of the nonconforming finite element spaces to the discretizations of this classical formulation and to a new ``completely" mixed formulation.
\begin{remark}\label{rem:confdualmix}
Here we call \eqref{eq:mixedhodgeclassicalp} ``primal" mixed formulation, and use the supscript ${}^{\bf p}$ to label that. 

Actually, it is natural to set an auxiliary mixed formulation, which seeks $(\fomega^{\bf d},\fzeta^{\bf d},\fvartheta^{\bf d})\in H^*_0\Lambda^k \times H^*_0\Lambda^{k+1}\times\hf^*_0\Lambda^k$, such that, for $(\fmu,\feta,\fvarsigma)\in H^*_0\Lambda^k \times H^*_0\Lambda^{k+1}\times\hf^*_0\Lambda^k$,
\begin{equation}\label{eq:mixedhodgeclassicald}
\left\{
\begin{array}{cccll}
&&\langle\fomega^{\bf d},\fvarsigma\rangle_{L^2\Lambda^k}&=0
\\
&\langle \fzeta^{\bf d},\feta\rangle_{L^2\Lambda^{k+1}}&-\langle\fomega^{\bf d},\odelta_{k+1}\feta\rangle_{L^2\Lambda^k}&=0
\\
\langle\fvartheta^{\bf d},\fmu\rangle_{L^2\Lambda^k}&+\langle \odelta_{k+1}\fzeta^{\bf d},\fmu\rangle_{L^2\Lambda^k}&+\langle\odelta_k\fomega^{\bf d},\odelta_k\fmu\rangle_{L^2\Lambda^{k-1}}&=\langle \ff,\fmu\rangle_{L^2\Lambda^k}
\end{array}.
\right.
\end{equation}
This can be viewed as a mixed formulation as the dual of \eqref{eq:mixedhodgeclassicalp}.

Conforming finite elements have been used for discretization of \eqref{eq:mixedhodgeclassicalp}; they are naturally used for \eqref{eq:mixedhodgeclassicald}. For example, we can consider the discretization for \eqref{eq:mixedhodgeclassicald}: to find $(\fomega^{\bf d}_h,\fzeta^{\bf d}_h,\fvartheta^{\bf d}_h)\in \fW^*_{h0}\Lambda^k \times \fW^*_{h0}\Lambda^{k+1}\times\hf^*_{h0}\Lambda^k$, such that, for $(\fmu_h,\feta_h,\fvarsigma_h)\in \fW^*_{h0}\Lambda^k\times \fW^*_{h0}\Lambda^{k+1}\times\hf^*_{h0}\Lambda^k$,
\begin{equation}\label{eq:mixedhodgeclassicalddis}
\left\{
\begin{array}{cccll}
&&\langle\fomega^{\bf d},\fvarsigma\rangle_{L^2\Lambda^k}&=0
\\
&\langle \mathbf{P}_h^{k+1}\fzeta^{\bf d}_h, \mathbf{P}_h^{k+1}\feta_h\rangle_{L^2\Lambda^{k+1}}&-\langle\fomega^{\bf d}_h,\odelta_{k+1}\feta_h\rangle_{L^2\Lambda^k}&=0
\\
\langle\fvartheta^{\bf d}_h,\fmu_h\rangle_{L^2\Lambda^k}&+\langle \odelta_{k+1}\fzeta^{\bf d}_h,\fmu_h\rangle_{L^2\Lambda^k}&+\langle\odelta_k\fomega^{\bf d}_h,\odelta_k\fmu_h\rangle_{L^2\Lambda^{k-1}}&=\langle \ff,\mathbf{P}_h^k\fmu_h\rangle_{L^2\Lambda^k}
\end{array}.
\right.
\end{equation}
The well-posedness of \eqref{eq:mixedhodgeclassicalddis} is the same as that of \eqref{eq:mixedhodgeclassicalpdis} below. The convergence analysis of \eqref{eq:mixedhodgeclassicalddis} can be done in a classical way; precisely, denote by $(\bar\fomega^{\bf d}_h,\bar\fzeta^{\bf d}_h,\bar\fvartheta^{\bf d}_h)\in \fW^*_{h0}\Lambda^k \times \fW^*_{h0}\Lambda^{k+1}\times\hf^*_{h0}\Lambda^k$ and $(\tilde\fomega^{\bf d}_h,\tilde\fzeta^{\bf d}_h,\tilde\fvartheta^{\bf d}_h)\in \fW^*_{h0}\Lambda^k \times \fW^*_{h0}\Lambda^{k+1}\times\hf^*_{h0}\Lambda^k$ the respective solutions of the auxiliary problems
\begin{equation}
\left\{
\begin{array}{cccll}
&&\langle\bar\fomega^{\bf d},\fvarsigma\rangle_{L^2\Lambda^k}&=0
\\
&\langle \bar\fzeta^{\bf d}_h, \feta_h\rangle_{L^2\Lambda^{k+1}}&-\langle\bar\fomega^{\bf d}_h,\odelta_{k+1}\feta_h\rangle_{L^2\Lambda^k}&=0
\\
\langle\bar\fvartheta^{\bf d}_h,\fmu_h\rangle_{L^2\Lambda^k}&+\langle \odelta_{k+1}\bar\fzeta^{\bf d}_h,\fmu_h\rangle_{L^2\Lambda^k}&+\langle\odelta_k\bar\fomega^{\bf d}_h,\odelta_k\fmu_h\rangle_{L^2\Lambda^{k-1}}&=\langle \mathbf{P}_h^k\ff,\fmu_h\rangle_{L^2\Lambda^k}
\end{array},
\right.
\end{equation}
and
\begin{equation}
\left\{
\begin{array}{cccll}
&&\langle\tilde\fomega^{\bf d},\fvarsigma\rangle_{L^2\Lambda^k}&=0
\\
&\langle \tilde\fzeta^{\bf d}_h, \feta_h\rangle_{L^2\Lambda^{k+1}}&-\langle\tilde\fomega^{\bf d}_h,\odelta_{k+1}\feta_h\rangle_{L^2\Lambda^k}&=0
\\
\langle\tilde\fvartheta^{\bf d}_h,\fmu_h\rangle_{L^2\Lambda^k}&+\langle \odelta_{k+1}\tilde\fzeta^{\bf d}_h,\fmu_h\rangle_{L^2\Lambda^k}&+\langle\odelta_k\tilde\fomega^{\bf d}_h,\odelta_k\fmu_h\rangle_{L^2\Lambda^{k-1}}&=\langle \ff,\fmu_h\rangle_{L^2\Lambda^k}
\end{array}.
\right.
\end{equation}
It follows by standard procedure that 
$$
\|(\fomega^{\bf d}_h,\fzeta^{\bf d}_h,\fvartheta^{\bf d}_h)-(\bar\fomega^{\bf d}_h,\bar\fzeta^{\bf d}_h,\bar\fvartheta^{\bf d}_h)\|_{H^*\Lambda^k\times H^*\Lambda^{k+1}\times L^2\Lambda^k}\leqslant Ch\|\mathbf{P}_h^k\ff\|_{L^2\Lambda^k}\leqslant Ch\|\ff\|_{L^2\Lambda^k},
$$
and
$$
\|(\bar\fomega^{\bf d}_h,\bar\fzeta^{\bf d}_h,\bar\fvartheta^{\bf d}_h)-(\tilde\fomega^{\bf d}_h,\tilde\fzeta^{\bf d}_h,\tilde\fvartheta^{\bf d}_h)\|_{H^*\Lambda^k\times H^*\Lambda^{k+1}\times L^2\Lambda^k}\leqslant Ch\|\ff\|_{L^2\Lambda^k}.
$$
Meanwhile, the classical analysis (cf. \cite[Theorem 7.10 and its proof]{Arnold.D;Falk.R;Winther.R2006acta}) holds as
$$
\|(\fomega^{\bf d},\fzeta^{\bf d},\fvartheta^{\bf d})-(\tilde\fomega^{\bf d}_h,\tilde\fzeta^{\bf d}_h,\tilde\fvartheta^{\bf d}_h)\|_{H^*\Lambda^k\times H^*\Lambda^{k+1}\times L^2\Lambda^k}\leqslant Ch^s\|\ff\|_{L^2\Lambda^k},
$$
if the domain $\Omega$ is s-regular. The convergence analysis of \eqref{eq:mixedhodgeclassicalddis} then follows. 
\end{remark}

%
%
\subsection{Nonconforming discretization of \eqref{eq:mixedhodgeclassicalp}}
By the newly designed nonconforming finite element spaces, the discrete problem is: to find $(\fomega^{\bf p}_h,\fsigma^{\bf p}_h,\fvartheta^{\bf p}_h)\in \mathbf{W}^{\rm nc}_h\Lambda^k \times \mathbf{W}^{\rm nc}_h\Lambda^{k-1}\times\hf_h^{\rm nc}\Lambda^k$, such that, for $(\fmu_h,\ftau_h,\fvarsigma_h)\in\mathbf{W}^{\rm nc}_h\Lambda^k \times \mathbf{W}^{\rm nc}_h\Lambda^{k-1}\times\hf_h^{\rm nc}\Lambda^k$,
\begin{equation}\label{eq:mixedhodgeclassicalpdis}
\left\{
\begin{array}{cccll}
&&\langle\fomega^{\bf p}_h,\fvarsigma_h\rangle_{L^2\Lambda^k}&=0
\\
&\langle \mathbf{P}_h^{k-1}\fsigma^{\bf p}_h,\mathbf{P}_h^{k-1}\ftau_h\rangle_{L^2\Lambda^{k+1}}&-\langle\fomega^{\bf p}_h,\od^{k-1}_h\ftau_h\rangle_{L^2\Lambda^k}&=0
\\
\langle\fvartheta^{\bf p}_h,\fmu_h\rangle_{L^2\Lambda^k}&+\langle \od^{k-1}_h\fsigma^{\bf p}_h,\fmu_h\rangle_{L^2\Lambda^k}&+\langle\od^k_h\fomega^{\bf p}_h,\od^k_h\fmu_h\rangle_{L^2\Lambda^{k-1}}&=\langle \ff,\mathbf{P}_h^k\fmu_h\rangle_{L^2\Lambda^k}
\end{array}.
\right.
\end{equation}

To verify the well-posedness of \eqref{eq:mixedhodgeclassicalpdis}, following \cite[Section 4.2.2]{Arnold.D2018feec}, writing $X_h:=\mathbf{W}^{\rm nc}_h\Lambda^k \times \mathbf{W}^{\rm nc}_h\Lambda^{k-1}\times\hf_h^{\rm nc}\Lambda^k$, with $\|(\fmu_h,\ftau_h,\fvarsigma_h)\|_{X_h}:=\|\mu_h\|_{\od^k_h}+\|\ftau_h\|_{\od^{k-1}_h}+\|\fvarsigma_h\|_{L^2\Lambda^k}$, denoting on $X_h\times X_h$
\begin{multline}
B_h((\fomega_h,\fsigma_h,\fvartheta_h), (\fmu_h,\ftau_h,\fvarsigma_h)):=\langle \mathbf{P}_h^{k-1}\fsigma^{\bf p}_h,\mathbf{P}_h^{k-1}\ftau_h\rangle_{L^2\Lambda^{k+1}}-\langle\fomega^{\bf p}_h,\od^{k-1}_h\ftau_h\rangle_{L^2\Lambda^k}
\\
-\langle\fvartheta^{\bf p}_h,\fmu_h\rangle_{L^2\Lambda^k}-\langle \od^{k-1}_h\fsigma^{\bf p}_h,\fmu_h\rangle_{L^2\Lambda^k}-\langle\od^k_h\fomega^{\bf p}_h,\od^k_h\fmu_h\rangle_{L^2\Lambda^{k-1}}-\langle\fomega^{\bf p}_h,\fvarsigma_h\rangle_{L^2\Lambda^k},
\end{multline}
we show the uniform inf-sup condition that 
\begin{equation}\label{eq:infsupB}
\inf_{0\neq (\fomega_h,\fsigma_h,\fvartheta_h)\in X_h}\sup_{0\neq (\fmu_h,\ftau_h,\fvarsigma_h)\in X_h}\frac{B_h((\fomega_h,\fsigma_h,\fvartheta_h), (\fmu_h,\ftau_h,\fvarsigma_h))}{\|(\fomega_h,\fsigma_h,\fvartheta_h)\|_{X_h}\|(\fmu_h,\ftau_h,\fvarsigma_h)\|_{X_h}}\geqslant\gamma>0.
\end{equation}

Given $(\fomega_h,\fsigma_h,\fvartheta_h)\in X_h$, we can decompose orthogonally $\fomega_h=\od^{k-1}_h\fvarrho_h+\fomega^{\hf}_h+\fomega_h^{\lrcorner}$, with $\fvarrho_h\in \fW^{\rm nc}_h\Lambda^{k-1}$, $\fomega^{\hf}_h\in \hf^{\rm nc}_h\Lambda^k$, and $\fomega_h^{\lrcorner}$ orthogonal to $\N(\od^k_h,\fW^{\rm nc}_h\Lambda^k)$, such that, by the discrete Poincar\'e inequality (Theorem \ref{thm:piwabc}), $\|\fvarrho_h\|_{\od^{k-1}_h}\leqslant c_P\|\od^{k-1}_h\fvarrho_h\|_{L^2\Lambda^k}$. 

Now, set $\ftau_h=\fsigma_h-\frac{1}{c_P^2}\fvarrho_h$, $\fmu_h=-\fomega_h-\od^{k-1}_h\fsigma_h-\fvartheta_h$, and $\fvarsigma_h=-\fvartheta_h+\fomega_h^{\hf}$, then 
$$
\|(\fmu_h,\ftau_h,\fvarsigma_h)\|_{X_h}\leqslant C\|(\fomega_h,\fsigma_h,\fvartheta_h)\|_{X_h},
$$
and
\begin{multline*}
B_h((\fomega_h,\fsigma_h,\fvartheta_h), (\fmu_h,\ftau_h,\fvarsigma_h))=\|\mathbf{P}_h^{k-1}\fsigma_h\|_{L^2\Lambda^{k-1}}^2+\|\od^{k-1}_h\fsigma_h\|_{L^2\Lambda^k}^2+\|\od^k_h\fomega_h\|_{L^2\Lambda^{k+1}}^2
\\
+\|\fvartheta_h\|_{L^2\Lambda^k}^2+\|\fomega_h^{\hf}\|_{L^2\Lambda^k}^2+\frac{1}{c_P^2}\|\od^{k-1}_h\fvarrho_h\|_{L^2\Lambda^k}-\frac{1}{c_P^2}\langle\fsigma_h,\fvarrho_h\rangle_{L^2\Lambda^{k-1}}.
\end{multline*}
Note further that 
\begin{multline*}
\langle\fsigma_h,\fvarrho_h\rangle_{L^2\Lambda^{k-1}}\leqslant \|\fsigma_h\|_{L^2\Lambda^{k-1}}\|\fvarrho_h\|_{L^2\Lambda^{k-1}}
\\
\leqslant \frac{c_P^2}{2}\|\fsigma_h\|_{L^2\Lambda^{k-1}}^2+\frac{1}{2c_P^2}\|\fvarrho_h\|_{L^2\Lambda^{k-1}}^2\leqslant \frac{c_P^2}{2}\|\fsigma_h\|_{L^2\Lambda^{k-1}}^2+\frac{1}{2}\|\od^{k-1}_h\fvarrho_h\|_{L^2\Lambda^k}^2
\end{multline*}
Thus
\begin{multline*}
B_h((\fomega_h,\fsigma_h,\fvartheta_h), (\fmu_h,\ftau_h,\fvarsigma_h))\geqslant \|\mathbf{P}_h^{k-1}\fsigma_h\|_{L^2\Lambda^{k-1}}^2+\|\od^{k-1}_h\fsigma_h\|_{L^2\Lambda^k}^2-\frac{1}{2}\|\fsigma_h\|_{L^2\Lambda^{k-1}}^2+\|\od^k_h\fomega_h\|_{L^2\Lambda^{k+1}}^2
\\
+\|\fvartheta_h\|_{L^2\Lambda^k}^2+\|\fomega_h^{\hf}\|_{L^2\Lambda^k}^2+\frac{1}{c_P^2}\|\od^{k-1}_h\fvarrho_h\|_{L^2\Lambda^k}
\\
\geqslant  \frac{1}{2}\|\fsigma_h\|_{L^2\Lambda^{k-1}}^2+(1-Ch^2)\|\od^{k-1}_h\fsigma_h\|_{L^2\Lambda^k}^2+\|\od^k_h\fomega_h\|_{L^2\Lambda^{k+1}}^2
+\|\fvartheta_h\|_{L^2\Lambda^k}^2+\|\fomega_h^{\hf}\|_{L^2\Lambda^k}^2+\frac{1}{c_P^2}\|\od^{k-1}_h\fvarrho_h\|_{L^2\Lambda^k}.
\end{multline*}
Note that $\od^k_h\fomega^{\lrcorner}_h=\od^k_h\fomega_h$, and, by Theorem \ref{thm:piwabc}, $\|\fomega^{\lrcorner}_h\|_{L^2\Lambda^k}\leqslant C\|\od^k_h\fomega_h\|_{L^2\Lambda^{k+1}}$. It follows then
$$
B_h((\fomega_h,\fsigma_h,\fvartheta_h), (\fmu_h,\ftau_h,\fvarsigma_h))\geqslant C\|(\fomega_h,\fsigma_h,\fvartheta_h)\|_{X_h}^2,
$$
with $C$ depending on the Poincar\'e inequality only. The inf-sup condition \eqref{eq:infsupB} is then proved and the well-posedness of \eqref{eq:mixedhodgeclassicalpdis} is verified.

\subsection{A novel mixed element scheme}

It is natural to consider an approach where both $\od^k$ and $\odelta_k$ are operated in a dual way, and we begin with this ``completely" mixed formulation: to find $(\fomega^{\rm c},\fzeta^{\rm c},\fsigma^{\rm c},\fvartheta^{\rm c})\in L^2\Lambda^k\times H^*_0\Lambda^{k+1}\times H\Lambda^{k-1}\times\hf\Lambda^k$, such that, for $(\fmu,\feta,\ftau,\fvarsigma)\in L^2\Lambda^k\times H^*_0\Lambda^{k+1}\times H\Lambda^{k-1}\times\hf\Lambda^k$,
\begin{equation}\label{eq:mixedhodge}
\left\{
\begin{array}{ccccll}
&&&\langle\fomega^{\rm c},\fvarsigma\rangle_{L^2\Lambda^k}&=0
\\
&\langle \fzeta^{\rm c},\feta\rangle_{L^2\Lambda^{k+1}}&&-\langle\fomega^{\rm c},\odelta_{k+1}\feta\rangle_{L^2\Lambda^k}&=0
\\
&&\langle \fsigma^{\rm c},\ftau\rangle_{L^2\Lambda^{k-1}}&-\langle\fomega^{\rm c},\od^{k-1}\ftau\rangle_{L^2\Lambda^k}&=0
\\
\langle\fvartheta^{\rm c},\fmu\rangle_{L^2\Lambda^k}&+\langle \odelta_{k+1}\fzeta^{\rm c},\fmu\rangle_{L^2\Lambda^k}&+\langle \od^{k-1}\fsigma^{\rm c},\fmu \rangle_{L^2\Lambda^k}&&=\langle \ff,\fmu\rangle_{L^2\Lambda^k}
\end{array}.
\right.
\end{equation}
\begin{lemma}
For $\ff\in L^2\Lambda^k$, the problem \eqref{eq:mixedhodge} admits a unique solution $(\fomega^{\rm c},\fzeta^{\rm c},\fsigma^{\rm c},\fvartheta^{\rm c})$, and 
\begin{equation}\label{eq:stabcmhl}
\|\fomega^{\rm c}\|_{L^2\Lambda^k}+\|\fzeta^{\rm c}\|_{\odelta_{k+1}}+\|\fsigma^{\rm c}\|_{\od^{k-1}}+\|\fvartheta^{\rm c}\|_{L^2\Lambda^k}\leqslant C\|\ff\|_{L^2\Lambda^k}.
\end{equation}
Further, $\fzeta^{\rm c}=\od^k\fomega^{\rm c}$, $\fsigma^{\rm c}=\odelta_k\fomega^{\rm c}$, and $\fomega^{\rm c}$ solves \eqref{eq:modelhlori}.
\end{lemma}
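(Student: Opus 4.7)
The plan is to decode the four equations sequentially so as to reduce \eqref{eq:mixedhodge} to the classical Hodge Laplace primal problem \eqref{eq:modelhlori}, whose well-posedness is assumed known, and then recover each of the remaining three unknowns from explicit expressions in $\fomega^{\rm c}$ and $\ff$.

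First I would \emph{decode the dual variables}. The second equation, read against arbitrary $\feta\in H^*_0\Lambda^{k+1}$, exactly says that $\fomega^{\rm c}$ belongs to the domain of the adjoint of $(\odelta_{k+1},H^*_0\Lambda^{k+1})$, which by Lemma~\ref{lem:a-book-6.5} is $(\od^k,H\Lambda^k)$; hence $\fomega^{\rm c}\in H\Lambda^k$ and $\fzeta^{\rm c}=\od^k\fomega^{\rm c}$. Similarly, the third equation forces $\fomega^{\rm c}\in H^*_0\Lambda^k$ and $\fsigma^{\rm c}=\odelta_k\fomega^{\rm c}$. The first equation says $\fomega^{\rm c}\perp\hf\Lambda^k$, and testing the fourth equation with $\fmu\in\hf\Lambda^k$ (using that $\odelta_{k+1}\fzeta^{\rm c}\perp\mathcal{N}(\od^k,H\Lambda^k)\supset\hf\Lambda^k$ and $\od^{k-1}\fsigma^{\rm c}\in\R(\od^{k-1},H\Lambda^{k-1})\perp\hf\Lambda^k$ by Lemma~\ref{lem:hddf}) yields $\fvartheta^{\rm c}=\mathbf{P}^k_{\hf}\ff$.

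Next I would \emph{reduce to} \eqref{eq:modelhlori}. Substituting the identifications above into the fourth equation and restricting to $\fmu\in H\Lambda^k\cap H^*_0\Lambda^k$ yields
\[
\langle\od^k\fomega^{\rm c},\od^k\fmu\rangle_{L^2\Lambda^{k+1}}+\langle\odelta_k\fomega^{\rm c},\odelta_k\fmu\rangle_{L^2\Lambda^{k-1}}=\langle\ff-\mathbf{P}^k_{\hf}\ff,\fmu\rangle_{L^2\Lambda^k},
\]
which, together with $\fomega^{\rm c}\perp\hf\Lambda^k$, is exactly \eqref{eq:modelhlori}. Conversely, given the unique $\fomega\in H\Lambda^k\cap H^*_0\Lambda^k$ solving \eqref{eq:modelhlori}, I define $\fzeta^{\rm c}:=\od^k\fomega$, $\fsigma^{\rm c}:=\odelta_k\fomega$, $\fvartheta^{\rm c}:=\mathbf{P}^k_{\hf}\ff$, and verify the four equations directly, using Lemma~\ref{lem:a-book-6.5} to move $\od^k$/$\odelta_k$ off $\fomega$ in the second and third lines, and the defining identity of \eqref{eq:modelhlori} plus the harmonic projection identity for the fourth.

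For \emph{uniqueness}, setting $\ff=0$ forces $\fvartheta^{\rm c}=0$ and reduces the system to the homogeneous version of \eqref{eq:modelhlori}, whose only solution is $\fomega^{\rm c}=0$; then $\fzeta^{\rm c}=\fsigma^{\rm c}=0$ as well. For the \emph{stability estimate} \eqref{eq:stabcmhl}, the bound on $\fomega^{\rm c}$ comes from the standard Poincar\'e inequality on $H\Lambda^k\cap H^*_0\Lambda^k\cap(\hf\Lambda^k)^\perp$ (a consequence of the closedness of the ranges of $\od^k$ and $\odelta_k$ via Theorem~\ref{thm:cohpi} and the Hodge decomposition of Lemma~\ref{lem:hddf}); this gives $\|\fomega^{\rm c}\|_{L^2\Lambda^k}+\|\od^k\fomega^{\rm c}\|_{L^2\Lambda^{k+1}}+\|\odelta_k\fomega^{\rm c}\|_{L^2\Lambda^{k-1}}\lesssim\|\ff\|_{L^2\Lambda^k}$. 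Control of $\|\odelta_{k+1}\fzeta^{\rm c}\|_{L^2\Lambda^k}$ and $\|\od^{k-1}\fsigma^{\rm c}\|_{L^2\Lambda^k}$ follows from the fourth equation and the bound $\|\fvartheta^{\rm c}\|_{L^2\Lambda^k}\leqslant\|\ff\|_{L^2\Lambda^k}$. The main potential obstacle is only bookkeeping: ensuring that the appropriate boundary conditions on $\fomega^{\rm c}$ are indeed encoded by the second and third equations in a symmetric fashion; this is handled cleanly by the adjoint pairings in Lemma~\ref{lem:a-book-6.5}.
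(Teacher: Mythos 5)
Your route is genuinely different from the paper's. The paper treats \eqref{eq:mixedhodge} directly as a saddle-point system and verifies Brezzi's conditions (the inf-sup condition for the off-diagonal form and coercivity on its kernel), both of which follow from the orthogonal Hodge decomposition $L^2\Lambda^k=\R(\od^{k-1},H\Lambda^{k-1})\opp\hf\Lambda^k\opp\R(\odelta_{k+1},H^*_0\Lambda^{k+1})$ together with the closedness of the two ranges; existence, uniqueness and \eqref{eq:stabcmhl} then come in one stroke, and the identifications $\fzeta^{\rm c}=\od^k\fomega^{\rm c}$, $\fsigma^{\rm c}=\odelta_k\fomega^{\rm c}$ are read off afterwards exactly as in your ``decoding'' step. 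Your decoding of the second and third equations via Lemma \ref{lem:a-book-6.5}, the identification $\fvartheta^{\rm c}=\mathbf{P}^k_{\hf}\ff$, and the uniqueness argument are all correct, and correspond to what the paper dismisses as ``the remaining assertions are straightforward.''

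There is, however, a genuine gap in your existence direction. To build a solution of \eqref{eq:mixedhodge} from the primal solution $\fomega$ of \eqref{eq:modelhlori} you must place $\fzeta^{\rm c}:=\od^k\fomega$ in $H^*_0\Lambda^{k+1}$ and $\fsigma^{\rm c}:=\odelta_k\fomega$ in $H\Lambda^{k-1}$; these memberships are part of the trial space of \eqref{eq:mixedhodge}, and without them the fourth equation, which involves $\odelta_{k+1}\fzeta^{\rm c}$ and $\od^{k-1}\fsigma^{\rm c}$ tested against all of $L^2\Lambda^k$, does not even make sense. This extra regularity is not automatic from the weak formulation \eqref{eq:modelhlori}, whose test space is only $H\Lambda^k\cap H^*_0\Lambda^k$; ``moving the operator off $\fomega$'' via Lemma \ref{lem:a-book-6.5} presupposes the membership rather than proving it. The gap is fillable: decompose an arbitrary $\fmu\in H\Lambda^k$ as $\fmu=\fmu_0+\fmu_1$ with $\fmu_0\in\N(\od^k,H\Lambda^k)$ and $\fmu_1\in\R(\odelta_{k+1},H^*_0\Lambda^{k+1})\cap H\Lambda^k$, observe that $\fmu_1\in H^*_0\Lambda^k$ with $\odelta_k\fmu_1=0$ and $\fmu_1\perp\hf\Lambda^k$, insert $\fmu_1$ into \eqref{eq:modelhlori} to obtain $\langle\od^k\fomega,\od^k\fmu\rangle_{L^2\Lambda^{k+1}}=\langle \fs,\fmu\rangle_{L^2\Lambda^k}$ with $\fs$ the orthogonal projection of $\ff-\mathbf{P}^k_{\hf}\ff$ onto $\N(\od^k,H\Lambda^k)^{\perp}$, and conclude $\od^k\fomega\in H^*_0\Lambda^{k+1}$ from the adjoint characterization in Lemma \ref{lem:a-book-6.5}; argue symmetrically for $\odelta_k\fomega$. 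With this step supplied your argument closes; the paper's Brezzi route avoids the issue entirely, which is presumably why it was chosen.
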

\begin{proof}
~~For \eqref{eq:stabcmhl}, we only have to verify Brezzi's conditions, which hold by the orthogonal Hodge decomposition 
$$
L^2\Lambda^k=\R(\od^{k-1},H\Lambda^{k-1})\opp\hf\Lambda^k\opp\R(\odelta_{k+1},H^*_0\Lambda^{k+1}),
$$
together with the closeness of $\mathcal{R}(\od^{k-1},H\Lambda^{k-1})$ and $\mathcal{R}(\odelta_{k+1},H^*_0\Lambda^{k+1})$. The remaining assertions are straightforward. The proof is completed. 
\end{proof}

A lowest-degree stable discretization of \eqref{eq:mixedhodge} is: find $(\fomega^{\rm c}_h,\fzeta^{\rm c}_h,\fsigma^{\rm c}_h,\fvartheta^{\rm c}_h)\in \mathcal{P}_0\Lambda^k(\mathcal{G}_h)\times \mathbf{W}^*_{h0}\Lambda^{k+1}\times \mathbf{W}^{\rm nc}_h\Lambda^{k-1}\times\hf^{\rm nc}_h\Lambda^k$, such that, for $(\fmu_h,\feta_h,\ftau_h,\fvarsigma_h)\in \mathcal{P}_0\Lambda^k(\mathcal{G}_h)\times \mathbf{W}^*_{h0}\Lambda^{k+1}\times \mathbf{W}^{\rm nc}_h\Lambda^{k-1}\times\hf^{\rm nc}_h\Lambda^k$, 
\begin{equation}\label{eq:mixedhodgedis}
\left\{
\begin{array}{ccccl}
&&&\langle\fomega^{\rm c}_h,\fvarsigma_h\rangle_{L^2\Lambda^k}&=0
\\
&\langle \mathbf{P}_h^{k+1}\fzeta^{\rm c}_h,\mathbf{P}_h^{k+1}\feta_h\rangle_{L^2\Lambda^{k+1}}&&-\langle\fomega^{\rm c}_h,\odelta_{k+1}\feta_h\rangle_{L^2\Lambda^k}&=0
\\
&&\langle \mathbf{P}_h^{k-1}\fsigma^{\rm c}_h,\mathbf{P}_h^{k-1}\ftau_h\rangle_{L^2\Lambda^{k-1}}&-\langle\fomega^{\rm c}_h,\od^{k-1}_h\ftau_h\rangle_{L^2\Lambda^k}&=0
\\
\langle\fvartheta^{\rm c}_h,\fmu_h\rangle_{L^2\Lambda^k}&+\langle \odelta_{k+1}\fzeta^{\rm c}_h,\fmu_h\rangle_{L^2\Lambda^k}&+\langle \od^{k-1}_h\fsigma^{\rm c}_h,\fmu_h \rangle_{L^2\Lambda^k}&&=\langle \ff,\fmu_h\rangle_{L^2\Lambda^k}
\end{array}.
\right.
\end{equation}

\begin{lemma}\label{lem:stabcomphodgelap}
Given $\ff\in L^2\Lambda^k$, the problem \eqref{eq:mixedhodgedis} admits a unique solution $(\fomega^{\rm c}_h,\fzeta^{\rm c}_h,\fsigma^{\rm c}_h,\fvartheta^{\rm c}_h)$, and 
$$
\|\fomega^{\rm c}_h\|_{L^2\Lambda^k}+\|\fzeta^{\rm c}_h\|_{\odelta_{k+1}}+\|\fsigma^{\rm c}_h\|_{\od^{k-1}_h}+\|\fvartheta^{\rm c}_h\|_{L^2\Lambda^k}\leqslant C\|f\|_{L^2\Lambda^k}.
$$
The constant $C$ depends on $\icr(\odelta_{k+1},\mathbf{W}_{h0}^*\Lambda^{k+1})$ and $\icr(\od^{k-1}_h,\mathbf{W}^{\rm nc}_h\Lambda^{k-1})$. 
\end{lemma}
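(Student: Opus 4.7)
The plan is to decouple the four coupled equations along the discrete Hodge decomposition
\eqref{eq:dishodgedec} of $\mathcal{P}_0\Lambda^k(\mathcal{G}_h)$ into the three orthogonal subspaces
$U_1 := \mathcal{R}(\od^{k-1}_h,\mathbf{W}^{\rm abc}_h\Lambda^{k-1})$,
$U_2 := \hf^{\rm abc}_h\Lambda^k$, and
$U_3 := \mathcal{R}(\odelta_{k+1},\mathbf{W}^*_{h0}\Lambda^{k+1})$, and to exploit the closed-range isomorphisms
$\od^{k-1}_h: (\mathbf{W}^{\rm abc}_h\Lambda^{k-1})^{\boldsymbol\lrcorner}\xrightarrow{\sim} U_1$ and
$\odelta_{k+1}: (\mathbf{W}^*_{h0}\Lambda^{k+1})^{\boldsymbol\lrcorner}\xrightarrow{\sim} U_3$ whose norms are controlled by
$\icr(\od^{k-1}_h,\mathbf{W}^{\rm abc}_h\Lambda^{k-1})$ and
$\icr(\odelta_{k+1},\mathbf{W}^*_{h0}\Lambda^{k+1})$, respectively. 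A preliminary useful fact is that the null spaces
$\mathcal{N}(\od^{k-1}_h,\mathbf{W}^{\rm abc}_h\Lambda^{k-1})$ and
$\mathcal{N}(\odelta_{k+1},\mathbf{W}^*_{h0}\Lambda^{k+1})$ are themselves contained in piecewise constants, because trimmed polynomials of the lowest degree lie in the kernel of $\od$ (resp.\ $\odelta$) only at the constant part (Section \ref{sec:ncfeec}); consequently $\mathbf{P}^{k\pm 1}_h$ acts as the identity on these kernels, which is what makes the $\mathbf{P}^{k\pm 1}_h$-weighted inner products in \eqref{eq:mixedhodgedis} harmless.

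First I would use the fourth equation of \eqref{eq:mixedhodgedis} to fix the ``range" unknowns. Since
$\fvartheta^{\rm c}_h\in U_2$, $\od^{k-1}_h\fsigma^{\rm c}_h\in U_1$, and $\odelta_{k+1}\fzeta^{\rm c}_h\in U_3$, testing with $\fmu_h$ in each of the three orthogonal summands in turn uniquely yields
\[
\fvartheta^{\rm c}_h = \mathbf{P}_{U_2}(\mathbf{P}^k_h\ff),\quad \od^{k-1}_h\fsigma^{\rm c}_h=\mathbf{P}_{U_1}(\mathbf{P}^k_h\ff),\quad \odelta_{k+1}\fzeta^{\rm c}_h=\mathbf{P}_{U_3}(\mathbf{P}^k_h\ff),
\]
with each term bounded by $\|\ff\|_{L^2\Lambda^k}$. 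The closed-range isomorphisms then recover the ``cokernel" parts $\fsigma^\perp\in(\mathbf{W}^{\rm abc}_h\Lambda^{k-1})^{\boldsymbol\lrcorner}$ and $\fzeta^\perp\in(\mathbf{W}^*_{h0}\Lambda^{k+1})^{\boldsymbol\lrcorner}$, with
\[
\|\fsigma^\perp\|_{L^2\Lambda^{k-1}}\le \icr(\od^{k-1}_h,\mathbf{W}^{\rm abc}_h\Lambda^{k-1})\,\|\ff\|_{L^2\Lambda^k},\qquad \|\fzeta^\perp\|_{L^2\Lambda^{k+1}}\le \icr(\odelta_{k+1},\mathbf{W}^*_{h0}\Lambda^{k+1})\,\|\ff\|_{L^2\Lambda^k}.
\]

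Next I would recover the ``kernel" parts of $\fsigma^{\rm c}_h,\fzeta^{\rm c}_h$ and the matching components of $\fomega^{\rm c}_h$ from the second and third equations. Writing $\fsigma^{\rm c}_h = \fsigma_0 + \fsigma^\perp$ with $\fsigma_0\in\mathcal{N}(\od^{k-1}_h,\mathbf{W}^{\rm abc}_h\Lambda^{k-1})$ and testing the third equation against $\ftau_h\in\mathcal{N}(\od^{k-1}_h,\mathbf{W}^{\rm abc}_h\Lambda^{k-1})$, the right-hand side vanishes and, because $\mathbf{P}^{k-1}_h\fsigma_0=\fsigma_0$, we obtain $\fsigma_0 = -\mathbf{P}_{\mathcal{N}}\mathbf{P}^{k-1}_h\fsigma^\perp$, uniquely and stably. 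Testing the third equation with $\ftau_h$ ranging over $(\mathbf{W}^{\rm abc}_h\Lambda^{k-1})^{\boldsymbol\lrcorner}$ then determines the $U_1$-component of $\fomega^{\rm c}_h$ via the closed-range isomorphism, with $\|\fomega^{\rm c}_h|_{U_1}\|\le \icr(\od^{k-1}_h,\mathbf{W}^{\rm abc}_h\Lambda^{k-1})\|\fsigma^{\rm c}_h\|$. The second equation is handled symmetrically to pin down the kernel part of $\fzeta^{\rm c}_h$ and the $U_3$-component of $\fomega^{\rm c}_h$. The first equation of \eqref{eq:mixedhodgedis} finally forces the $U_2$-component of $\fomega^{\rm c}_h$ to vanish. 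Combining the above bounds yields the claimed stability estimate, and reversing the argument shows that any solution of \eqref{eq:mixedhodgedis} must coincide with the one constructed, giving uniqueness; existence follows since the finite-dimensional square system is therefore injective.

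The main subtlety is the presence of the $\mathbf{P}^{k\pm1}_h$ weights, which prevent a direct application of the continuous Brezzi-type analysis of \eqref{eq:mixedhodge}. The key structural fact making the plan work is the one highlighted in the first paragraph: $\mathbf{P}^{k+1}_h$ and $\mathbf{P}^{k-1}_h$ are the identity on $\mathcal{N}(\odelta_{k+1},\mathbf{W}^*_{h0}\Lambda^{k+1})$ and $\mathcal{N}(\od^{k-1}_h,\mathbf{W}^{\rm abc}_h\Lambda^{k-1})$, respectively, because these kernels are piecewise constant. This allows a clean splitting into kernel and cokernel problems exactly as above; without it the $\mathbf{P}^{k\pm 1}_h$-projected inner products would mix the two and obstruct the decoupling argument.
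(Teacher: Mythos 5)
Your proof is correct and rests on the same two pillars the paper itself points to for this lemma --- the discrete Hodge decomposition \eqref{eq:dishodgedec} and the finiteness of the two indices of closed range --- which is the discrete counterpart of the Brezzi-condition argument the paper gives for the continuous problem \eqref{eq:mixedhodge}; your version simply carries out the decoupling explicitly rather than citing the abstract saddle-point theory. The one genuinely valuable detail you add is the observation that $\mathcal{N}(\od^{k-1}_h,\mathbf{W}^{\rm abc}_h\Lambda^{k-1})$ and $\mathcal{N}(\odelta_{k+1},\mathbf{W}^*_{h0}\Lambda^{k+1})$ consist of piecewise constants (by Lemma \ref{lem:localcouple} and its Hodge-star dual), so that the quadrature projections $\mathbf{P}^{k\pm1}_h$ act as the identity on these kernels; this is precisely what makes the kernel part of the analysis (equivalently, the coercivity-on-the-kernel condition) survive the projected inner products, a point the paper leaves unaddressed.
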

Again, for the well-posedness of \eqref{eq:mixedhodgedis}, we only have to verify Brezzi's conditions, which holds by the discrete Hodge decomposition \eqref{eq:dishgd}. The stable decompositions \eqref{eq:dishgd} comes true by the aid of the nonconforming space $\fW^{\rm nc}_h\Lambda^k$. Hence \eqref{eq:mixedhodgedis} is a new scheme hinted in nonconforming finite element exterior calculus. 

%
%
\subsection{Equivalences among lowest-degree mixed element schemes}

\begin{lemma}\label{lem:hlmprimaldual}
Let $(\fomega^{\rm c}_h,\fzeta^{\rm c}_h,\fsigma^{\rm c}_h,\fvartheta^{\rm c}_h)$, $(\fomega^{\bf p}_h,\fsigma^{\bf p}_h,\fvartheta^{\bf p}_h)$ and $(\fomega^{\bf d}_h,\fzeta^{\bf d}_h,\fvartheta^{\bf d}_h)$ be the solutions of \eqref{eq:mixedhodgedis}, \eqref{eq:mixedhodgeclassicalpdis} and \eqref{eq:mixedhodgeclassicalddis}, respectively. Then
\begin{equation}\label{eq:completevsdual}
\fvartheta^{\bf d}_h=\fvartheta^{\rm c}_h,\ \fzeta^{\bf d}_h=\fzeta^{\rm c}_h,\ \mathbf{P}_h^k\fomega^{\bf d}_h=\fomega^{\rm c}_h,\ \odelta_k\fomega^{\bf d}_h=\mathbf{P}_h^{k-1}\fsigma^{\rm c}_h,\ \odelta_{k+1} \fzeta^{\bf d}_h=\mathbf{P}_h^k\ff-\od^{k-1}_h \fsigma^{\rm c}_h-\fvartheta^{\rm c}_h,
\end{equation}
\begin{equation}\label{eq:completevsprimal}
\fvartheta^{\bf p}_h=\fvartheta^{\rm c}_h,\  \fsigma^{\bf p}_h=\fsigma^{\rm c}_h,\ \mathbf{P}_h^k\fomega^{\bf p}_h=\fomega^{\rm c}_h,\  \od^k_h\fomega^{\bf p}_h=\mathbf{P}_h^{k+1}\fzeta^{\rm c}_h, \ \od^{k-1}_h\fsigma^{\bf p}_h=\mathbf{P}_h^k\ff-\odelta_{k+1} \fzeta^{\rm c}_h-\fvartheta^{\rm c}_h,
\end{equation}
\begin{equation}\label{eq:primalvsdual}
\fvartheta^{\bf d}_h=\fvartheta^{\bf p}_h,\ \mathbf{P}_h^{k+1}\fzeta^{\bf d}_h=\od^k_h\fomega^{\bf p}_h,\ \mathbf{P}_h^k\fomega^{\bf d}_h= \mathbf{P}_h^k\fomega^{\bf p}_h,\ \odelta_k\fomega^{\bf d}_h=\mathbf{P}_h^{k-1}\fsigma^{\bf p}_h,\ \odelta_{k+1} \fzeta^{\bf d}_h+\od^{k-1}_h\fsigma^{\bf p}_h=\mathbf{P}_h^k\ff-\fvartheta^{\rm c}_h.
\end{equation}
\end{lemma}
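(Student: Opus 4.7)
The plan is to prove \eqref{eq:completevsdual}--\eqref{eq:primalvsdual} by starting from the unique completely-mixed solution $(\fomega^{\rm c}_h,\fzeta^{\rm c}_h,\fsigma^{\rm c}_h,\fvartheta^{\rm c}_h)$ of \eqref{eq:mixedhodgedis}, constructing candidate tuples for the primal and dual mixed schemes as prescribed in the statement, verifying that they solve \eqref{eq:mixedhodgeclassicalpdis} and \eqref{eq:mixedhodgeclassicalddis} respectively, and invoking uniqueness. Uniqueness of \eqref{eq:mixedhodgeclassicalpdis} and \eqref{eq:mixedhodgeclassicalddis} follows from a standard Brezzi-type argument using the discrete Poincar\'e inequalities of Lemma \ref{lem:piconfwhit} and Lemma \ref{lem:asymptoticqcrt}, the discrete Hodge decomposition \eqref{eq:dishodgedec}, and the Poincar\'e-Lefschetz identifications $\hf_h^{\rm abc}\Lambda^k=\hf^*_{h0}\Lambda^k$ and $\hf_{h0}^{\rm abc}\Lambda^k=\hf^*_h\Lambda^k$ from \eqref{eq:displduality}.

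For \eqref{eq:completevsdual} I would take $\fvartheta^{\bf d}_h:=\fvartheta^{\rm c}_h$, which is admissible since $\fvartheta^{\rm c}_h\in\hf^{\rm abc}_h\Lambda^k=\hf^*_{h0}\Lambda^k$, set $\fzeta^{\bf d}_h:=\fzeta^{\rm c}_h\in\fW^*_{h0}\Lambda^{k+1}$, and construct $\fomega^{\bf d}_h\in\fW^*_{h0}\Lambda^k$ satisfying $\mathbf{P}_h^k\fomega^{\bf d}_h=\fomega^{\rm c}_h$, $\odelta_k\fomega^{\bf d}_h=\mathbf{P}_h^{k-1}\fsigma^{\rm c}_h$, and $\fomega^{\bf d}_h\perp\hf^*_{h0}\Lambda^k$. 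The crucial point is that $\mathbf{P}_h^{k-1}\fsigma^{\rm c}_h\in\mathcal{R}(\odelta_k,\fW^*_{h0}\Lambda^k)$: testing the third equation of \eqref{eq:mixedhodgedis} against $\ftau_h\in\mathcal{N}(\od^{k-1}_h,\fW^{\rm abc}_h\Lambda^{k-1})$ shows that $\mathbf{P}_h^{k-1}\fsigma^{\rm c}_h$ is $L^2$-orthogonal to $\mathcal{N}(\od^{k-1}_h,\fW^{\rm abc}_h\Lambda^{k-1})$, and the discrete Helmholtz decomposition
$$
\mathcal{P}_0\Lambda^{k-1}(\mathcal{G}_h)=\mathcal{N}(\od^{k-1}_h,\fW^{\rm abc}_h\Lambda^{k-1})\opp\mathcal{R}(\odelta_k,\fW^*_{h0}\Lambda^k)
$$
then forces the membership. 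Uniqueness of $\fomega^{\bf d}_h$ follows from the Hodge-type splitting $\fW^*_{h0}\Lambda^k=\mathcal{R}(\odelta_{k+1},\fW^*_{h0}\Lambda^{k+1})\opp\hf^*_{h0}\Lambda^k\opp(\fW^*_{h0}\Lambda^k)^{\boldsymbol\lrcorner_{\odelta_k}}$, in which the three summands are fixed respectively by $\mathbf{P}_h^k\fomega^{\bf d}_h$, harmonic orthogonality, and $\odelta_k\fomega^{\bf d}_h$ (using $\hf^*_{h0}\Lambda^k\subset\mathcal{P}_0\Lambda^k(\mathcal{G}_h)$ and the first equation of \eqref{eq:mixedhodgedis} for consistency). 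The three equations of \eqref{eq:mixedhodgeclassicalddis} are then verified directly: the second reduces to the second of \eqref{eq:mixedhodgedis} via $\odelta_{k+1}\feta_h\in\mathcal{P}_0\Lambda^k(\mathcal{G}_h)$, and the third uses partial adjointness of $\bigl[(\od^{k-1}_h,\fW^{\rm abc}_h\Lambda^{k-1}),(\odelta_k,\fW^*_{h0}\Lambda^k)\bigr]$ to rewrite $\langle\odelta_k\fomega^{\bf d}_h,\odelta_k\fmu_h\rangle_{L^2\Lambda^{k-1}}=\langle\od^{k-1}_h\fsigma^{\rm c}_h,\fmu_h\rangle_{L^2\Lambda^k}$ before invoking the fourth equation of \eqref{eq:mixedhodgedis} (whose three left-hand-side terms all lie in $\mathcal{P}_0\Lambda^k(\mathcal{G}_h)$, so testing against $\mathbf{P}_h^k\fmu_h$ is equivalent to testing against $\fmu_h$).

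The identity \eqref{eq:completevsprimal} is obtained by the mirror-image argument, exchanging $\od\leftrightarrow\odelta$, $k{-}1\leftrightarrow k{+}1$, and the two partially adjoint pairs; the corresponding existence statement $\mathbf{P}_h^{k+1}\fzeta^{\rm c}_h\in\mathcal{R}(\od^k_h,\fW^{\rm abc}_h\Lambda^k)$ is drawn from the second equation of \eqref{eq:mixedhodgedis} together with the companion discrete Helmholtz decomposition $\mathcal{P}_0\Lambda^{k+1}(\mathcal{G}_h)=\mathcal{R}(\od^k_h,\fW^{\rm abc}_h\Lambda^k)\opp\mathcal{N}(\odelta_{k+1},\fW^*_{h0}\Lambda^{k+1})$. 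Finally, \eqref{eq:primalvsdual} is read off by combining \eqref{eq:completevsdual} and \eqref{eq:completevsprimal} coordinate-by-coordinate and using uniqueness. The main obstacle is the construction in the preceding paragraph: producing a globally conforming $\fomega^{\bf d}_h\in\fW^*_{h0}\Lambda^k$ whose piecewise-constant projection and codifferential are independently prescribed relies jointly on the discrete Helmholtz decomposition (to ensure the prescribed codifferential actually lies in the codifferential range) and the conforming Hodge splitting (to pin down the remaining degrees of freedom), and is precisely where the interplay between the nonconforming $\fW^{\rm abc}$ complex and the conforming $\fW^*$ complex, mediated by partial adjointness, is indispensable.
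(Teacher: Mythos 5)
Your proposal is correct, but it runs the argument in the opposite direction from the paper. The paper starts from the solution $(\fomega^{\bf d}_h,\fzeta^{\bf d}_h,\fvartheta^{\bf d}_h)$ of \eqref{eq:mixedhodgeclassicalddis}, introduces a Lagrange multiplier $\overline{\fsigma}_h\in\fW^{\rm abc}_h\Lambda^{k-1}$ so that the third equation, originally tested only against $\fW^*_{h0}\Lambda^k$, extends to the whole broken space $\mathcal{P}^{*,-}_1\Lambda^k(\mathcal{G}_h)$; testing against piecewise constants then yields $\fvartheta^{\bf d}_h+\odelta_{k+1}\fzeta^{\bf d}_h+\od^{k-1}_h\overline{\fsigma}_h=\mathbf{P}_h^k\ff$ and $\odelta_k\fomega^{\bf d}_h=\mathbf{P}_h^{k-1}\overline{\fsigma}_h$, after which the tuple $(\mathbf{P}_h^k\fomega^{\bf d}_h,\fzeta^{\bf d}_h,\overline{\fsigma}_h,\fvartheta^{\bf d}_h)$ is shown to satisfy \eqref{eq:mixedhodgedis} and identified with $(\fomega^{\rm c}_h,\fzeta^{\rm c}_h,\fsigma^{\rm c}_h,\fvartheta^{\rm c}_h)$ by the uniqueness already secured in Lemma \ref{lem:stabcomphodgelap}. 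You instead start from the completely-mixed solution, build candidate primal and dual tuples, and invoke uniqueness of \eqref{eq:mixedhodgeclassicalpdis} and \eqref{eq:mixedhodgeclassicalddis}. Each choice has a cost: the paper's direction relies only on the uniqueness it has already proved, but quietly assumes the existence of the multiplier $\overline{\fsigma}_h$ (an inf-sup statement for $\fW^{\rm abc}_h\Lambda^{k-1}$ against the quotient of the broken space by the conforming one, which the text does not justify); your direction requires the well-posedness of the primal and dual mixed schemes, which the paper never states explicitly, but in exchange it yields existence of their solutions as a byproduct and makes fully explicit the one genuinely nontrivial step --- lifting $\fomega^{\rm c}_h$ and $\mathbf{P}_h^{k-1}\fsigma^{\rm c}_h$ to a conforming $\fomega^{\bf d}_h\in\fW^*_{h0}\Lambda^k$ with independently prescribed piecewise-constant projection and codifferential, which your use of the discrete Helmholtz decomposition at levels $k-1$ and $k$ handles correctly (the adjustment $\fv=\fomega^{\rm c}_h-\mathbf{P}_h^k\fw$ lands in $\mathcal{N}(\odelta_k,\fW^*_{h0}\Lambda^k)$ precisely because the third equation of \eqref{eq:mixedhodgedis} together with partial adjointness makes it orthogonal to $\mathcal{R}(\od^{k-1}_h,\fW^{\rm abc}_h\Lambda^{k-1})$). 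The verifications of the individual equations and the derivation of \eqref{eq:primalvsdual} by combination coincide with the paper's.
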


\begin{proof}
~~Let $(\fomega^{\bf d}_h, \fzeta^{\bf d}_h, \fvartheta^{\bf d}_h)$ be the solution of \eqref{eq:mixedhodgeclassicalddis}. Then, with a $ \overline{\fsigma}_h\in \mathbf{W}_h^{\rm nc}\Lambda^{k-1}$, 
\begin{multline*}
\qquad
\langle \fvartheta^{\bf d}_h,\fmu_h\rangle_{L^2\Lambda^k}+\langle \odelta_{k+1} \fzeta^{\bf d}_h,\fmu_h\rangle_{L^2\Lambda^k}+\langle\odelta_k \fomega^{\bf d}_h,\odelta_k\fmu_h\rangle_{L^2\Lambda^{k-1}}
\\
+\langle\od^{k-1}_h  \overline{\fsigma}_h,\fmu_h\rangle_{L^2\Lambda^k}-\langle  \overline{\fsigma}_h,\odelta_k\fmu_h\rangle_{L^2\Lambda^{k-1}}=\langle \ff,\mathbf{P}_h^k\fmu_h\rangle_{L^2\Lambda^k},\qquad
\end{multline*}
for any $\fmu_h\in \mathcal{P}^{*,-}_1\Lambda^{k}(\mathcal{G}_h)$. Choosing arbitrarily $\fmu_h\in \mathcal{P}_0\Lambda^k(\mathcal{G}_h)$, we have
\begin{equation}\label{eq:geometricequation}
\fvartheta^{\bf d}_h+\odelta_{k+1} \fzeta^{\bf d}_h+\od^{k-1}_h\overline{\fsigma}_h=\mathbf{P}_h^k\ff,
\end{equation}
and
\begin{equation*}
\langle\odelta_k \fomega^{\bf d}_h,\odelta_k\fmu_h\rangle_{L^2\Lambda^{k-1}}-\langle  \overline{\fsigma}_h,\odelta_k\fmu_h\rangle_{L^2\Lambda^{k-1}}=0, \ \ \forall\,\fmu_h\in \mathcal{P}^{*,-}_1\Lambda^k(\mathcal{G}_h),
\end{equation*}
which leads to that $\odelta_k \fomega^{\bf d}_h=\mathbf{P}_h^{k-1} \overline{\fsigma}_h$. Further, noting that $\langle\odelta_k \fomega^{\bf d}_h,\ftau_h\rangle_{L^2\Lambda^{k-1}}=\langle  \fomega^{\bf d}_h, \od^k_h\ftau_h\rangle_{L^2\Lambda^k}$ for $\ftau_h\in \mathbf{W}_h^{\rm nc}\Lambda^{k-1}$, we obtain $\langle \mathbf{P}_h^{k-1} \overline{\fsigma}_h, \mathbf{P}_h^{k-1} \ftau_h^{\bf d}\rangle_{L^2\Lambda^{k-1}}-\langle  \fomega^{\bf d}_h, \od^{k-1}_h\ftau_h\rangle_{L^2\Lambda^k}=0$
for $\ftau_h\in \mathbf{W}_h^{\rm nc}\Lambda^{k-1}$.

In all, $(\mathbf{P}_h^k \fomega^{\bf d}_h, \fzeta^{\bf d}_h, \overline{\fsigma}_h, \fvartheta^{\bf d}_h)\in \mathcal{P}_0\Lambda^k(\mathcal{G}_h)\times \fW^*_{h0}\Lambda^{k+1}\times \mathbf{W}^{\rm nc}_h\Lambda^{k-1}\times\hf^{\rm nc}_h\Lambda^k$ satisfies the system \eqref{eq:mixedhodgedis}, and thus $(\mathbf{P}_h^k \fomega^{\bf d}_h, \fzeta^{\bf d}_h, \overline{\fsigma}_h, \fvartheta^{\bf d}_h)=(\fomega^{\rm c}_h,\fzeta^{\rm c}_h,\fsigma^{\rm c}_h,\fvartheta^{\rm c}_h)$. This proves \eqref{eq:completevsdual}. Similarly can \eqref{eq:completevsprimal} be proved, and \eqref{eq:primalvsdual} follows by \eqref{eq:completevsdual} and \eqref{eq:completevsprimal}. The proof is completed. 
\end{proof}
The convergence analysis of \eqref{eq:mixedhodgedis} and \eqref{eq:mixedhodgeclassicalpdis} follow directly by Remark \ref{rem:confdualmix} and Lemma \ref{lem:hlmprimaldual}, and we omit the details here.  

\subsection{A decomposition processes for solving \eqref{eq:mixedhodgedis}}

Firstly, we decomposition \eqref{eq:mixedhodgedis} to two subsystems.

\begin{lemma}\label{lem:nohf}
Let $(\fomega^{\rm c}_h,\fzeta^{\rm c}_h,\fsigma^{\rm c}_h,\fvartheta^{\rm c}_h)$ be the solution of \eqref{eq:mixedhodgedis}, let $\fzeta_h\ \mbox{and}\ \fvarphi_h\in \fW^*_{h0}\Lambda^{k+1}$ be such that, for any $\feta_h\ \mbox{and}\ \fpsi_h\in \fW^*_{h0}\Lambda^{k+1}$,
\begin{equation}\label{eq:cm-4-sub-1}
\left\{
\begin{array}{ccll}
\langle \mathbf{P}_h^{k+1}\fzeta^{\rm c}_h,\mathbf{P}_h^{k+1}\feta_h\rangle_{L^2\Lambda^{k+1}}&-\langle\odelta_{k+1}\fvarphi_h,\odelta_{k+1}\feta_h\rangle_{L^2\Lambda^k}&=0
\\
\langle \odelta_{k+1}\fzeta^{\rm c}_h,\odelta_{k+1}\fpsi_h\rangle_{L^2\Lambda^k}&&=\langle \ff,\odelta_{k+1}\fpsi_h\rangle_{L^2\Lambda^k}
\end{array},
\right.
\end{equation}
and let $\fsigma_h\ \mbox{and}\ \fvarrho_h\in \fW^{\rm nc}_h\Lambda^{k-1}$ be such that, for any $\ftau_h\ \mbox{and}\ \fvarpi_h\in \fW^{\rm nc}_h\Lambda^{k-1}$,
\begin{equation}\label{eq:cm-4-sub-2}
\left\{
\begin{array}{ccll}
\langle \mathbf{P}_h^{k-1}\fsigma^{\rm c}_h,\mathbf{P}_h^{k-1}\ftau_h\rangle_{L^2\Lambda^{k-1}}&-\langle\od^{k-1}_h\fvarrho_h,\od^{k-1}_h\ftau_h\rangle_{L^2\Lambda^k}&=0
\\
\langle \od^{k-1}_h\fsigma^{\rm c}_h,\od^{k-1}_h\fvarpi_h \rangle_{L^2\Lambda^k}&&=\langle \ff,\od^{k-1}_h\fvarpi_h\rangle_{L^2\Lambda^k}
\end{array}.
\right.
\end{equation}
Then 
\begin{equation}\label{eq:decomcompmix}
\fzeta^{\rm c}_h=\fzeta_h, \ \fsigma^{\rm c}_h=\fsigma_h,\ \mbox{and}\ \fomega^{\rm c}_h=\od^{k-1}_h\fvarrho_h+\odelta_{k+1}\fvarphi_h. 
\end{equation}
\end{lemma}
\begin{proof}
~~The existence of solutions to \eqref{eq:cm-4-sub-1} and \eqref{eq:cm-4-sub-2} is easy to verify, where $\fzeta_h$ and $\fsigma_h$ are uniquely determined, $\fvarphi_h$ is uniquely determined up to $\N(\odelta_{k+1},\fW^*_{h0}\Lambda^{k+1})$, and $\fvarrho_h$ is uniquely determined up to $\N(\od^{k-1}_h,\fW^{\rm nc}_h\Lambda^{k-1})$. By the Hodge decomposition of $\mathcal{P}_0\Lambda^k(\mathcal{G}_h)$, we can decompose $\fomega_h\in \mathcal{P}_0\Lambda^k(\mathcal{G}_h)$ to $\fomega^{\rm c}_h=\fiota^{\rm c}_h+\odelta_{k+1}\fvarphi^{\rm c}_h+\od^{k-1}_h\fvarrho^{\rm c}_h$ with $\fiota^{\rm c}_h\in \hf_h\Lambda^k$, $\fvarphi^{\rm c}_h\in \fW^*_{h0}\Lambda^{k+1}$ and $\fvarrho^{\rm c}_h\in\fW^{\rm nc}_h\Lambda^{k-1}$, and $\od^{k-1}_h\fvarrho^{\rm c}_h$ and $\odelta_{k+1}\fvarphi^{\rm c}_h$ are uniquely determined. We can similarly write $\fmu_h=\fchi_h+\odelta_{k+1}\fpsi_h+\od^{k-1}_h\fvarpi_h$. Substituting the decompositions of $\fomega^{\rm c}_h$ and $\fmu_h$ into \eqref{eq:mixedhodgedis} leads to subsystems \eqref{eq:cm-4-sub-1} and \eqref{eq:cm-4-sub-2}, and further \eqref{eq:decomcompmix}.  
\end{proof}

Noting that both \eqref{eq:cm-4-sub-1} and \eqref{eq:cm-4-sub-2} are each a saddle problem whose solution is not unique, we are now to further decompose them to series of semi positive definite problems to solve. 

\begin{lemma}
Let $(\fzeta_h^{\lrcorner},\fxi^{\lrcorner}_h,\fvarphi^{\lrcorner}_h)$ be a solution of the sequence of problems below:
\begin{enumerate}
\item find $\fzeta_h^{\lrcorner}\in \fW^*_{h0}\Lambda^{k+1}$, such that 
\begin{equation}
\langle \odelta_{k+1}\fzeta^{\lrcorner}_h,\odelta_{k+1}\fpsi_h\rangle_{L^2\Lambda^k}=\langle \ff,\odelta_{k+1}\fpsi_h\rangle_{L^2\Lambda^k},\ \ \forall\,\fpsi_h\in \fW^*_{h0}\Lambda^{k+1};
\end{equation}
\item find $\fxi_h^{\lrcorner}\in \fW^{\rm nc}_h\Lambda^k$, such that 
\begin{equation}
\langle\od^k_h\fxi^{\lrcorner}_h,\od^k_h\fnu_h\rangle_{L^2\Lambda^{k+1}}=\langle\odelta_{k+1}\fzeta^{\lrcorner}_h,\fnu_h\rangle_{L^2\Lambda^k},\ \ \forall\,\fnu_h\in \fW^{\rm nc}_h\Lambda^k;
\end{equation}
\item find $\fvarphi^{\lrcorner}_h\in\fW^*_{h0}\Lambda^{k+1}$, such that 
\begin{equation}
\langle \odelta_{k+1}\fvarphi^{\lrcorner}_h,\odelta_{k+1}\feta_h\rangle_{L^2\Lambda^k}= \langle \od^k_h\fxi^{\lrcorner}_h,\feta_h\rangle_{L^2\Lambda^{k+1}},\ \ \forall\,\feta_h\in \fW^*_{h0}\Lambda^{k+1}.
\end{equation}
\end{enumerate}
Let $(\fzeta_h,\fvarphi_h)$ be a solution of \eqref{eq:cm-4-sub-1}. Then
\begin{equation}
\fzeta_h=\frac{(-1)^{nk}}{n-k}\okappa_h^{\odelta}(\odelta_{k+1}\fzeta_h^{\lrcorner})+\od^k_h\fxi_h^{\lrcorner},\ \ \ \mbox{and}\ \ \odelta_{k+1}\fvarphi_h=\odelta_{k+1}\fvarphi_h^{\lrcorner}. 
\end{equation}
\end{lemma}
\begin{proof}
~~ Evidently, $(\fzeta_h^{\lrcorner},\fxi^{\lrcorner}_h,\fvarphi^{\lrcorner}_h)$ exists and is unique up to $\N(\odelta_{k+1},\fW^*_{h0}\Lambda^{k+1})\times \N(\od^k_h,\fW^{\rm nc}\Lambda^k)\times \N(\odelta_{k+1},\fW^*_{h0}\Lambda^{k+1})$, further $\odelta_{k+1}\fzeta_h^{\lrcorner}=\odelta_{k+1}\fzeta_h$. Since $\langle\mathbf{P}_h^{k+1}\fzeta_h,\mathbf{P}_h^{k+1}\feta_h\rangle_{L^2\Lambda^{k+1}}=\langle \odelta_{k+1}\fxi_h,\odelta_{k+1}\feta_h\rangle_{L^2\Lambda^k}$ for any $\feta_h\in\fW^*_h\Lambda^{k+1},$ it holds that $\mathbf{P}_h^{k+1}\fzeta_h$ is orthogonal to $\N(\odelta_{k+1},\fW^*_{h0}\Lambda^{k+1})$, and thus $\mathbf{P}_h^{k+1}\fzeta_h\in\R(\od^k_h,\fW^{\rm nc}_h\Lambda^k)$. Namely, there exists a $\fxi^{\lrcorner}_h\in \fW^{\rm nc}_h\Lambda^k$, such that $\fzeta_h=(\fzeta_h-\mathbf{P}_h^{k+1}\fzeta_h)+\od^k_h\fxi^{\lrcorner}_h$. As for any $\fnu_h\in \fW^{\rm nc}_h\Lambda^k$, $\langle\odelta_{k+1}\fzeta_h,\fnu_h\rangle_{L^2\Lambda^k}=\langle\fzeta_h,\od^k_h\fnu_h\rangle_{L^2\Lambda^{k+1}}$, it holds  further that, with $\od^k_h\fnu_h$ being piecewise constant, $\langle\od^k_h\fxi^{\lrcorner}_h,\od^k_h\fnu_h\rangle_{L^2\Lambda^{k+1}}=\langle\odelta_{k+1}\fzeta^{\lrcorner}_h,\fnu_h\rangle_{L^2\Lambda^k}$. 
It follows by the homotopy formula that $\fzeta_h=\frac{(-1)^{nk}}{n-k}\okappa_h^{\odelta}(\odelta_{k+1}\fzeta_h^{\lrcorner})+\od^k_h\fxi_h^{\lrcorner}.$ 
Then $\langle\odelta_{k+1}\fvarphi_h,\odelta_{k+1}\feta_h\rangle_{L^2\Lambda^k}=\langle \mathbf{P}_h^{k+1}\fzeta^{\rm c}_h,\mathbf{P}_h^{k+1}\feta_h\rangle_{L^2\Lambda^{k+1}}=\langle \od^k_h\fxi^{\lrcorner}_h,\feta_h\rangle_{L^2\Lambda^{k+1}}$ for $\feta_h\in \fW^*_{h0}\Lambda^{k+1}$, and it thus follows that $\odelta_{k+1}\fvarphi_h=\odelta_{k+1}\fvarphi_h^{\lrcorner}.$ The proof is completed. 
\end{proof}
Similarly we have the decomposition of \eqref{eq:cm-4-sub-2}.
\begin{lemma}
Let $(\fsigma_h^{\lrcorner},\fiota^{\lrcorner}_h,\fvarrho^{\lrcorner}_h)$ be a solution of the sequence of problems below:
\begin{enumerate}
\item find $\fsigma_h^{\lrcorner}\in \fW^{\rm nc}_h\Lambda^{k-1}$, such that 
\begin{equation}
\langle \od^{k-1}_h\fsigma^{\lrcorner}_h,\od^{k-1}_h\fvarpi_h\rangle_{L^2\Lambda^k}=\langle \ff,\od^{k-1}_h\fvarpi_h\rangle_{L^2\Lambda^k},\ \ \forall\,\fvarpi_h\in \fW^{\rm nc}_h\Lambda^{k-1};
\end{equation}
\item find $\fiota_h^{\lrcorner}\in \fW^*_{h0}\Lambda^k$, such that 
\begin{equation}
\langle\odelta_k\fiota^{\lrcorner}_h, \odelta_k\fchi_h\rangle_{L^2\Lambda^{k-1}}=\langle\od^{k-1}\fsigma^{\lrcorner}_h,\fchi_h\rangle_{L^2\Lambda^k},\ \ \forall\,\fchi_h\in \fW^*_{h0}\Lambda^k;
\end{equation}
\item find $\fvarrho^{\lrcorner}_h\in\fW^{\rm nc}_h\Lambda^{k-1}$, such that 
\begin{equation}
\langle \od^{k-1}_h\fvarrho^{\lrcorner}_h,\od^{k-1}_h\ftau_h\rangle_{L^2\Lambda^k}= \langle \odelta_k\fiota^{\lrcorner}_h,\ftau_h\rangle_{L^2\Lambda^{k-1}},\ \ \forall\,\ftau_h\in \fW^{\rm nc}_h\Lambda^{k-1}.
\end{equation}
\end{enumerate}
Let $(\fsigma_h,\fvarrho_h)$ be one solution of \eqref{eq:cm-4-sub-2}. Then
\begin{equation}
\fsigma_h=\frac{1}{k}\okappa_h(\od^{k-1}_h\fsigma_h^{\lrcorner})+\odelta_k\fiota_h^{\lrcorner},\ \ \ \mbox{and}\ \ \od^{k-1}_h\fsigma_h=\od^{k-1}_h\fsigma_h^{\lrcorner}. 
\end{equation}
\end{lemma}

\begin{remark}
It is illustrated that the system \eqref{eq:mixedhodgedis}, as well as \eqref{eq:mixedhodgeclassicalddis} and \eqref{eq:mixedhodgeclassicalp}, can be transferred to a series of semi positive definite problems to solve. Particularly, these systems can be solved without knowledge of $\hf_h\Lambda^k$, which consists of globally supported functions and which cannot generally be figured out. A decomposition similar to Lemma \ref{lem:nohf} can be carried out onto \eqref{eq:mixedhodgeclassicalddis} without the aid of $\fW^{\rm nc}_h\Lambda^k$ and onto \eqref{eq:mixedhodgeclassicalpdis} without the aid of $\fW^*_{h0}\Lambda^k$. However, the further decomposition of \eqref{eq:cm-4-sub-1} and \eqref{eq:cm-4-sub-2} will rely on the combinational utilization of $\fW^{\rm nc}_h\Lambda^k$ and $\fW^*_{h0}\Lambda^k$ together.
\end{remark}


%
%
%
\section{Concluding remarks}
\label{sec:conc}

This paper presents a unified construction of finite element spaces for $H\Lambda^k$ in $\rn$, extending the Crouzeix-Raviart paradigm to differential forms. Beyond error estimation as usual, differences from existing classical schemes are preliminarily demonstrated using eigenvalue problems as examples, and can be further investigated through additional applications, for instance where a locally defined stable interpolator matters. Actually, the role of a locally-defined stable interpolator used to be illustrated by the correct computation of the convex variational problems \cite{Ortner.C2011ima}. A new way to impose inter-cell continuity is indicated, and finite element spaces can be constructed in future for various problems by this new approach, as well as on non-simplicial meshes. This approach suggests potential extensions to nonstandard and nonconforming meshes which will be discussed in future. This paper focuses on pure Dirichlet and pure Neumann boundary conditions. It is noteworthy that mixed boundary conditions have recently been investigated in \cite{Licht.M2019mc,christiansen2020poincare,Licht.M2017FoCM}. The new approach also works for that and can be discussed in future. Relevant to the equivalences established in \cite{Marini.L1985sinum} between the Crouzeix-Raviart element discretization and the Raviart-Thomas element discretization for Poisson equations, the equivalence between the conforming and nonconforming finite element schemes on the Hodge Laplace problem in Section \ref{sec:dishl} is the generalization of \cite{Marini.L1985sinum} with new interpretations. 
~\\

Within classical FEEC theory, discrete Hodge decompositions for $H\Lambda^k$ spaces are established, allowing for in-contractible domains, as demonstrated in (5.6) of \cite{Arnold.D2018feec}, which reads $V^k_h=\mathcal{B}^k_h\omp \mathfrak{H}^k_h\omp\mathcal{B}^*_{k,h}$, where $\mathcal{B}^*_{k,h}$ is the range of a {\bf globally} defined operator $d^*_{jh}$. These decompositions can be rebuilt based on $\fW^{\rm nc}_h\Lambda^k$. Beyond this, the theory of nonconforming finite element exterior calculus contains discrete Helmholtz decompositions and Hodge decompositions for piecewise constant $k$-forms corresponding to $L^2\Lambda^k$ for both contractible and in-contractible domains. Notably, the Hodge decompositions presented in this paper differ from those in \cite{Arnold.D2018feec} in that all discrete operators involved are {\bf locally} defined, i.e., cell by cell. In other words, the discrete derivative and coderivative operators are both local. Inspired by \cite{Lee.J;Winther.R2018local}, discretization scheme for the Hodge Laplace problem with local derivatives and local coderivatives will be studied in future. Recently, in two and three dimensions, discrete Helmholtz decompositions have been explored not only for piecewise constant but also for piecewise affine vector and tensor fields \cite{Bringmann.P;Ketteler.J;Schedensack.M2024}; it is intriguing to observe that the non-Ciarlet type finite element spaces of \cite{Fortin.M;Soulie.M1983,Zhang.S2021SCM} have been utilized as a basis therein. The generalization of the results presented in this manuscript to higher-degree vector and tensor fields in higher dimensions will be discussed in future.
~\\

In Section \ref{subsec:decoms}, a reciprocal causation \eqref{eq:complexduality} between two discrete complexes is presented. This can be viewed a discrete analogue of the dual complexes composed by adjoint operator pairs (namely $\od$ and $\odelta$), defined in Section 4.1.2 of \cite{Arnold.D2018feec}. We note that kinds of dualities used to be studied in, e.g., \cite{Arnold.D;Falk.R;Winther.R2009CMAME,Berchenko.Y2021duality,Buffa.A;Christiansen.S2007dual,Dlotko.P;Specogna.R2013physics,Jain.V;Zhang.Y;Palha.A;Gerristma.M2021CMA,Nakata.Y;Urade.Y;Nakanishi.T2019,Oden.J1974,Schoberl.J2008mc,Wieners.C;Barbara.W2011sinum,Licht.M2017FoCM}. Prior works primarily address dual representations of finite element spaces; dual grids have usually been used for the construction of discretized dual complexes. In this paper, the discrete dual complexes by function spaces are both constructed on a same grid. Therefore, the duality argument can be designed to derive uniform discrete Poincar\'e inequalities leveraging the adjoint relationship between $\od^k$ and $\odelta_{k+1}$, formulating some analogue of the closed range theorem; see Section \ref{sec:qcrt} for a quantifiable version of the closed range theorem. Further, our approach avoids nonlocal operators when establishing dual connections; all discrete operators involved are local.  The discrete complex duality can be expected further studied in future. Particularly, the validity of the structure of complex and the commutative diagram may not necessarily depend on the dualities \eqref{eq:n=r=c} or \eqref{eq:nabladivdual}.


\appendix


%
%
%
\section{Proofs of Lemmas \ref{lem:discontbpi}, \ref{lem:localhot} and \ref{lem:discrt}}
\label{sec:localwhitney}


\paragraph{\bf Proof of Lemma \ref{lem:discontbpi}}
Decompose $\fW^{\rm nc}_h\Lambda^k=\N(\od^k_h,\fW^{\rm nc}_h\Lambda^k)\opp (\fW^{\rm nc}_h\Lambda^k)^{\boldsymbol \lrcorner}$, orthogonal in $L^2\Lambda^k(\Omega)$. Given $\fsigma_h\in (\fW^{\rm nc}_h\Lambda^k)^{\boldsymbol \lrcorner}$, decompose orthogonally $\fsigma_h=\mathring{\fsigma}_h+\fsigma_h^{\boldsymbol \lrcorner}$, such that $\mathring{\fsigma}_h\in\mathcal{P}_0\Lambda^k(\mathcal{G}_h)$ and $\displaystyle \fsigma_h^{\boldsymbol \lrcorner}\in \bigoplus_{T\in\mathcal{G}_h}E_T^\Omega\okappa_T(\mathcal{P}_0\Lambda^{k+1}(T))$. As $\N(\od^k_h,\fW^{\rm nc}_h\Lambda^k)\subset\mathcal{P}_0\Lambda^k(T)$, we have further $\fsigma_h^{\boldsymbol \lrcorner}$ is orthogonal to $\N(\od^k_h,\fW^{\rm nc}_h\Lambda^k)$; therefore, $\mathring{\fsigma}_h$ is orthogonal to $\N(\od^k_h,\fW^{\rm nc}_h\Lambda^k)$, and further $\mathring{\fsigma}_h\in\R(\odelta_{k+1},\fW^*_{h0}\Lambda^{k+1})$ by Theorem \ref{thm:dishd}. Decompose 
$\fW^*_{h0}\Lambda^{k+1}=\N(\odelta_{k+1},\fW^*_{h0}\Lambda^{k+1})\opp (\fW^*_{h0}\Lambda^{k+1})^{\boldsymbol \lrcorner}$. Then $\R(\odelta_{k+1},\fW^*_{h0}\Lambda^{k+1})=\R(\odelta_{k+1},(\fW^*_{h0}\Lambda^{k+1})^{\boldsymbol \lrcorner})$. Therefore,
\begin{multline*}
\|\mathring{\fsigma}_h\|_{L^2\Lambda^k}=\sup_{\fmu_h\in (\fW^*_{h0}\Lambda^{k+1})^{\boldsymbol \lrcorner}}\frac{\langle \mathring{\fsigma}_h,\odelta_{k+1}\fmu_h\rangle_{L^2\Lambda^k}}{\|\odelta_{k+1}\fmu_h\|_{L^2\Lambda^k}} 
=
\sup_{\fmu_h\in (\fW^*_{h0}\Lambda^{k+1})^{\boldsymbol \lrcorner}}\frac{\langle \fsigma_h^{\boldsymbol \lrcorner},\odelta_{k+1}\fmu_h\rangle_{L^2\Lambda^k}+\langle \od^k_h\fsigma_h^{\boldsymbol \lrcorner},\fmu_h\rangle_{L^2\Lambda^{k+1}}}{\|\odelta_{k+1}\fmu_h\|_{L^2\Lambda^k}}
\\
\leqslant  \|\fsigma_h^{\boldsymbol \lrcorner}\|_{L^2\Lambda^k}+\|\od^k_h\fsigma_h^{\boldsymbol \lrcorner}\|_{L^2\Lambda^{k+1}}\sup_{\fmu_h\in (\fW^*_{h0}\Lambda^{k+1})^{\boldsymbol \lrcorner}}\frac{\|\fmu_h\|_{L^2\Lambda^{k+1}}}{\|\odelta_{k+1}\fmu_h\|_{L^2\Lambda^k}}
\\ 
\leqslant \|\od^k_h\fsigma_h\|_{L^2\Lambda^{k+1}} \icr(\od_h^k,\mathcal{P}^-_1\Lambda^k(\mathcal{G}_h)) +\|\od^k_h\fsigma_h^{\boldsymbol \lrcorner}\|_{L^2\Lambda^{k+1}}\icr(\odelta_{k+1},\fW^*_{h0}\Lambda^{k+1}).
\end{multline*}
Then $\displaystyle \|\fsigma_h\|_{L^2\Lambda^k}\leqslant \|\mathring{\fsigma}_h\|_{L^2\Lambda^k}+\|\fsigma_h^{\boldsymbol \lrcorner}\|_{L^2\Lambda^k} \leqslant \|\od^k_h\fsigma_h\|_{L^2\Lambda^{k+1}}(2\icr(\od_h^k,\mathcal{P}^-_1\Lambda^k(\mathcal{G}_h))+\icr(\odelta_{k+1},\fW^*_{h0}\Lambda^{k+1})).$ 
This completes the proof. \qed
\begin{remark}
No continuous problem or Sobolev space is used as a bridge here, and this is a direct relation based on the discrete adjoint connection between $\fW^*_{h0}\Lambda^{k+1}$ and $\fW^{\rm nc}_h\Lambda^k$. 
\end{remark}

\begin{lemma}\label{lem:pikappa}
There exists a constant $C_{k,n}$, depending on the regularity of $T$, such that 
\begin{equation}\label{eq:pimud}
\|\fmu\|_{L^2\Lambda^k(T)}\leqslant C_{k,n}h_T\|\od^k\fmu\|_{L^2\Lambda^{k+1}(T)},\ \ \mbox{for}\ \ \fmu\in\okappa_T(\mathcal{P}_0\Lambda^{k+1}(T)).
\end{equation}
\end{lemma}
\begin{proof}
~~Given  $\displaystyle \fmu=\sum_{\ixalpha\in\mathbb{IX}_{k+1,n}}C_{\ixalpha} \left(\sum_{j=1}^{k+1}(-1)^{j+1}\tilde x^{\ixalpha_j}\dx^{\ixalpha_1}\wedge\dx^{\ixalpha_2}\wedge\dots\wedge\dx^{\ixalpha_{j-1}}\wedge\dx^{\ixalpha_{j+1}}\wedge\dots\wedge \dx^{\ixalpha_{k+1}}\right),$ 
\begin{multline*}
|\fmu|^2_{H^1\Lambda^k(T)}
=\left\|\sum_{\ixalpha\in\mathbb{IX}_{k,n}}C_{\ixalpha}\sum_{j=1}^{k+1}(-1)^{j+1}\nabla \tilde{x}^{\ixalpha_j}\dx^{\ixalpha_1}\wedge\dots\wedge\dx^{\ixalpha_{j-1}}\wedge\dx^{\ixalpha_{j+1}}\wedge\dots\wedge \dx^{\ixalpha_{k+1}}\right\|_{L^2\Lambda^k(T)}^2
\\
=\left\langle \sum_{\ixalpha\in\mathbb{IX}_{k,n}}C_{\ixalpha}\sum_{j=1}^{k+1}(-1)^{j+1}\nabla \tilde{x}^{\ixalpha_j}\dx^{\ixalpha_1}\wedge\dots\wedge\dx^{\ixalpha_{j-1}}\wedge\dx^{\ixalpha_{j+1}}\wedge\dots\wedge \dx^{\ixalpha_{k+1}}, \right.\qquad
\\
\qquad\qquad\left.\sum_{\ixalpha'\in\mathbb{IX}_{k,n}}C_{\ixalpha'}\sum_{i=1}^{k+1}(-1)^{i+1}\nabla \tilde{x}^{\ixalpha'_i}\dx^{\ixalpha'_1}\wedge\dots\wedge\dx^{\ixalpha'_{i-1}}\wedge\dx^{\ixalpha'_{i+1}}\wedge\dots\wedge \dx^{\ixalpha'_{k+1}}\right\rangle_{L^2\Lambda^k(T)}
\\
=\sum_{\ixalpha\in\mathbb{IX}_{k,n}}\sum_{\ixalpha'\in\mathbb{IX}_{k,n}} C_{\ixalpha}C_{\ixalpha'}\sum_{j=1}^{k+1}\sum_{i=1}^{k+1} (-1)^{j+i}e^{\ixalpha_j}\cdot e^{\ixalpha_i} \Bigg\langle \dx^{\ixalpha_1}\wedge\dots\wedge\dx^{\ixalpha_{j-1}}\wedge\dx^{\ixalpha_{j+1}}\wedge\dots\wedge \dx^{\ixalpha_{k+1}}, 
\\
\dx^{\ixalpha'_1}\wedge\dots\wedge\dx^{\ixalpha'_{i-1}}\wedge\dx^{\ixalpha'_{i+1}}\wedge\dots\wedge \dx^{\ixalpha'_{k+1}}\Bigg\rangle_{L^2\Lambda^k(T)}=(k+1)|T|\sum_\alpha C_\alpha^2,
\end{multline*}
and $\displaystyle\left\|\od^k\mu\right\|_{L^2\Lambda^{k+1}(T)}^2=(k+1)^2\left\|\sum_\alpha C_\alpha \dx^{\ixalpha_1}\wedge\dx^{\ixalpha_2}\wedge\dots\wedge\dx^{\ixalpha_{k+1}}\right\|_{L^2\Lambda^{k+1}(T)}^2=(k+1)^2|T|\sum_{\ixalpha}C_{\ixalpha}^2.$
Namely 
$$
\displaystyle
\|\od^k\mu\|_{L^2\Lambda^{k+1}(T)}=\sqrt{k+1}|\fmu|_{H^1\Lambda^k(T)}.
$$ Therefore, by noting that $\int_T\tilde{x}^j=0$, with a constant $C_n$ depending on the regularity of $T$, we obtain
$$
\|\fmu\|_{L^2\Lambda^k(T)}\leqslant C_nh_T|\fmu|_{H^1\Lambda^k(T)}=C_n(k+1)^{-1/2}h_T\|\od^k\mu\|_{L^2\Lambda^{k+1}(T)}.
$$
This completes the proof. 
\end{proof}

%

\paragraph{\bf Proof of Lemma \ref{lem:localhot}} Evidently, 
\begin{multline}\label{eq:lcctr}
\icr(\od_h^k,\mathcal{P}^-_1\Lambda^k(\mathcal{G}_h))
=\sup_{\displaystyle\ftau_h\in \bigoplus_{T\in\mathcal{G}_h}E_T^\Omega\okappa_T(\mathcal{P}_0\Lambda^{k+1}(T))}\frac{\|\ftau_h\|_{L^2\Lambda^k}}{\|\od^k_h\ftau_h\|_{L^2\Lambda^{k+1}}}
\\
=\max_{T\in\mathcal{G}_h}\sup_{\ftau\in \okappa_T(\mathcal{P}_0\Lambda^{k+1}(T))}\frac{\|\ftau\|_{L^2\Lambda(T)}}{\|\od^k\ftau\|_{L^2\Lambda^{k+1}(T)}}.
\end{multline}
By Lemma \ref{lem:pikappa} and \eqref{eq:lcctr}, $\icr(\od_h^k,\mathcal{P}^-_1\Lambda^k(\mathcal{G}_h))$ is of $\mathcal{O}(h)$ order. 
\qed

\paragraph{\bf Proof of Lemma \ref{lem:discrt}} By virtue of Lemma \ref{lem:discontbpi} and Remark \ref{rem:mutual}, $\icr(\odelta_{k+1},\fW^*_{h0}\Lambda^{k+1})$ is controlled by $\icr(\od_h^k,\fW^{\rm nc}_h\Lambda^k)$ the same way. Further by Lemma \ref{lem:localhot}, we obtain Lemma \ref{lem:discrt}. \qed

\section{A quantifiable closed range theorem}
\label{sec:qcrt}

In this part, we establish a quantifiable version of the classical closed range theorem, in order to show how Lemma \ref{lem:discrt} can be viewed as a discrete analogue of the closed range theorem. 

Let $\xX$ and $\yY$ be two Hilbert spaces with respective inner products $\langle\cdot,\cdot\rangle_\xX$ and $\langle\cdot,\cdot\rangle_\yY$, and let $(\oT,\xD):\xX\to \yY$ be an unbounded linear operator, $\xD$ being the domain dense in $\xX$. The adjoint operator of $(\oT,\xD)$, denoted by $(\oT^*,\xD^*)$, is defined by
\begin{equation}\label{eq:adjointdef}
\langle \oT^*\yw,\xv\rangle_\xX=\langle \yw,\oT\xv\rangle_\yY,\ \ \forall\,\xv\in \xD,
\end{equation}
and the domain $\xD^*$ consists of such $\yw\in\yY$ that there exists an element in $\xX$ taken as $\oT^*\yw$ to satisfy \eqref{eq:adjointdef}. The closed range theorem (cf. \cite{Arnold.D2018feec,Yosida.K1965,Kato.T1980,Brezis.H2010} and other textbooks) asserts that 
\begin{equation}
\R(\oT,\xD)\ \mbox{is\ closed}\ \ \Longleftrightarrow \ \ \R(\oT^*,\xD^*)\ \mbox{is\ closed}. 
\end{equation}
It further follows by Lemma \ref{lem:pivscr} that 
\begin{equation}
\icr(\oT,\xD)<\infty\ \ \Longleftrightarrow\ \  \icr(\oT^*,\xD^*)<\infty. 
\end{equation}

The theorem below further gives a preciser quantification of the closed range theorem. 

\begin{theorem}\label{thm:cohpi}
For $(\oT,\xD):\xX\to \yY$ and $(\yT,\yD):\yY\to\xX$ a pair of closed densely defined adjoint operators,
\begin{equation}\label{eq:idicr}
\icr(\oT,\xD)= \icr(\yT,\yD). 
\end{equation}
\end{theorem}
\begin{proof}
~~Recalling the Helmholtz decomposition $\xX=\N(\oT,\xD)\opp\overline{\R(\yT,\yD)}$, we have
\begin{equation}
\xD^{\boldsymbol \lrcorner}=\xD\cap (\N(\oT,\xD))^\perp=\xD\cap \overline{\R(\yT,\yD)}.
\end{equation}
Therefore, provided that $0<\icr(\yT,\yD)<\infty$ and thus $\overline{\R(\yT,\yD)}=\R(\yT,\yD)$, given $\xv\in\xD^{\boldsymbol \lrcorner}$, there exists a $\yw\in \yD^{\boldsymbol \lrcorner}$, such that $\xv=\yT\yw$, then $\|\yw\|_{\yY}\leqslant \icr(\yT,\yD) \|\xv\|_{\xX}$ and
$$
\|\xv\|_\xX^2=\langle\xv,\xv\rangle_{\xX}=\langle \xv,\yT\yw\rangle_{\xX}=\langle\oT\xv,\yw\rangle_{\yY}\leqslant \|\oT\xv\|_\yY\|\yw\|_{\yY}\leqslant \icr(\yT,\yD) \|\oT\xv\|_\yY\|\xv\|_{\xX}.
$$
Therefore, $\|\xv\|_\xX\leqslant \icr(\yT,\yD)\|\oT\xv\|_\xX$ for any $\xv\in \xD^{\boldsymbol \lrcorner}$ and $\icr(\oT,\xD)\leqslant \icr(\yT,\yD)<\infty$. Similarly, $\infty>\icr(\oT,\xD)\geqslant \icr(\yT,\yD)$; note that $(\oT,\xD)$ is the adjoint operator of $(\yT,\yD)$. Namely, if one of $\icr(\oT,\xD)$ and $\icr(\yT,\yD)$ is finitely positive, then $\icr(\oT,\xD)=\icr(\yT,\yD)$.

If $\icr(\yT,\yD)=0$, then $\R(\yT,\yD)=\left\{0\right\}$ and $\xD^{\boldsymbol\lrcorner}=\left\{0\right\}$. It follows then $\icr(\oT,\xD)=0$. Finally, if one of $\icr(\oT,\xD)$ and $\icr(\yT,\yD)$ is $+\infty$, then so is the other. The proof is completed. 
\end{proof}


\end{document}